\newcommand{\calG}{{\mathcal G}}
\newcommand{\farc}{\frac}
\newtheorem{theorem}{Theorem}[section]
\newtheorem{lemma}[theorem]{Lemma}
\newtheorem{proposition}[theorem]{Proposition}
\theoremstyle{remark}
\newtheorem{remark}[theorem]{Remark}
\renewenvironment{proof}[1][Proof]{ {\itshape \noindent {#1.}} }{$\Box$
\medskip}
\numberwithin{equation}{section}
\newcommand{\R}{\mathbb{R}}
\newcommand{\Z}{\mathbb{Z}}
\newcommand{\Pb}{\mathbb{P}}
\newcommand{\E}{\mathbb{E}}
\newcommand{\F}{\mathcal{F}}
\newcommand{\G}{\mathcal{G}}
\newcommand{\C}{\mathcal{C}}
\newcommand{\cR}{\mathcal{R}}
\newcommand{\V}{\mathcal{V}}
\newcommand{\Q}{\mathcal{Q}}
\newcommand{\U}{\mathcal{U}}
\newcommand{\cU}{\mathscr{U}}
\newcommand{\eps}{\varepsilon}
\def\les{\lesssim}
\newcommand{\Var}{\mathrm{Var}}
\newcommand{\la}{\langle}
\newcommand{\ra}{\rangle}
\newcommand{\X}{\mathbf{X}}
\newcommand{\Y}{\mathbf{Y}}
\newcommand{\1}{\mathbbm{1}}
\newcommand{\cH}{\mathcal{H}}
\newcommand{\cP}{\mathcal{P}}
\newcommand{\ccR}{\mathscr{R}}
\begin{document}

\title{Fluctuations of the solutions to the KPZ equation\\ in dimensions three and higher}
\author{Alexander Dunlap, Yu Gu, Lenya Ryzhik, Ofer Zeitouni}
\date{}
\maketitle

\begin{abstract}
We prove, using probabilistic techniques and analysis on the Wiener space,  that 
the large scale fluctuations of the KPZ equation in $d\geq 3$ with a small coupling constant, driven by a white in time and colored in space noise,
are given by the Edwards-Wilkinson model. This gives an alternative proof, that avoids
perturbation expansions, to the results of Magnen and Unterberger~\cite{magnen2017diffusive}.
\end{abstract}
\maketitle

\section{Introduction}

\subsection{The main result}

We consider the random heat equation 
\begin{equation}\label{e.maineq}
\partial_t u=\frac12\Delta u+\beta V(t,x)u, \    \  u(0,x)\equiv1,  \   \  x\in\R^d, d\geq 3,
\end{equation}
with a coupling constant $\beta>0$, and a random process $V(t,x)$  that is white in time and colored in space, constructed from a space-time 
white noise $\dot W(t,x)$ defined on the probability space $(\Omega,\mathcal{F},\Pb)$:
\[
V(t,x)=\int_{\R^d} \varphi(x-y)\dot{W}(t,y)dy.
\] 
Here, the non-negative mollifier $ \varphi\ge 0$ is in $\C_c^\infty(\R^d)$, and the product $V(t,x)u$ in \eqref{e.maineq} 
is interpreted in the It\^o sense. The solution of  \eqref{e.maineq} is
then   a continuous random field.  We denote by $R(x)$  the spatial covariance function of $V(t,x)$: 
\begin{equation}\label{e.defR}
R(x)=\int_{\R^d} \varphi(x+y)\varphi(y)dy,
\end{equation}
so we can formally write 
\[
\E[V(t,x)V(s,y)]=\delta(t-s)R(x-y).
\] 
Note that $R\in \C_c^\infty(\R^d)$, and without additional 
loss of generality, we assume   that $R(x)=0$ for~$|x|\ge 1$.

Consider the macroscopically rescaled solution 
\[
u_\eps(t,x)=u(\frac{t}{\eps^2},\frac{x}{\eps}),
\]
which solves 
\[
\partial_t u_\eps=\frac12\Delta u_\eps+\frac{\beta}{\eps^2} V(\frac{t}{\eps^2},\frac{x}{\eps}) u_\eps.
\]
The goal of this paper is to identify the asymptotic fluctuations of 
\begin{equation}\label{nov602}
h_\eps(t,x)=\frac{1}{\beta\eps^{(d-2)/2}}\log u_\eps(t,x)
\end{equation}
 as a random distribution, that is, the asymptotic distribution, as $\eps\to 0$,  of
\begin{equation}
\int_{\R^d} h_\eps(t,x)g(x)dx
\end{equation}
for any test function $g\in \C_c^\infty(\R^d)$.  Note that by the scaling property of the space-time white noise, we have 
\begin{equation}\label{e.lawV}
\frac{1}{\eps^2}V(\frac{t}{\eps^2},\frac{x}{\eps}) \stackrel{\text{law}}{=} \eps^{\frac{d-2}{2}} \dot{W}_\eps(t,x), \quad      \mbox{ with } \quad \dot{W}_\eps(t,x)=\frac{1}{\eps^d}\int_{\R^d} \varphi(\frac{x-y}{\eps}) \dot{W}(t,y)dy.
\end{equation}
By \eqref{e.lawV} and the It\^o formula, it is clear that $h_\eps -\E[h_\eps]\stackrel{\text{law}}{=}\tilde{h}_\eps-\E[\tilde{h}_\eps]$ with 
\begin{equation}\label{e.kpz}
\partial_t \tilde{h}_\eps=\frac12\Delta \tilde{h}_\eps+\frac{1}{2}\beta\eps^{\frac{d-2}{2}}|\nabla \tilde{h}_\eps|^2 + \dot{W}_\eps(t,x),
 \quad \tilde{h}_\eps(0,x)\equiv0.
\end{equation}
Here is the main result of paper. 
\begin{theorem}\label{t.mainth}
  There exists $\beta_0=\beta_0(d,\varphi)$ so that  
for $\beta<\beta_0$, $t>0$ and test function $g\in \C_c^\infty(\R^d)$, we have
\begin{equation}\label{e.mainre}
\int_{\R^d} \left(h_\eps(t,x)-\E[h_\eps(t,x)]\right)g(x)dx
\Rightarrow \int_{\R^d} \U(t,x) g(x)dx
\end{equation}
in distribution as $\eps\to0$, where $\U$ solves the Edwards-Wilkinson equation 
\begin{equation}\label{e.defU}
\partial_t \U=\frac12\Delta \U+ \nu_{\mathrm{eff}}  \dot{W}(t,x), \   \ \U(0,x)\equiv 0,
\end{equation}
with the effective variance
\begin{equation}\label{e.effvar}
\nu_{\mathrm{eff}}^2=\int_{\R^d} R(x) \E_B\left[ \exp\left(\frac12\beta^2\int_0^\infty R(x+B_s)ds\right)\right] dx,
\end{equation}
where $B$ is a standard Brownian motion and $\E_B$ denotes the expectation with respect to it.
\end{theorem}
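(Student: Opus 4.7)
My approach is to combine the Clark--Ocone formula with the martingale central limit theorem, after first linearizing the logarithm in~\eqref{nov602}. In the regime $\beta<\beta_0$ and $d\ge 3$, the polymer/SHE literature suggests that $\xi_\eps:=u_\eps/\E u_\eps - 1$ has $L^2$ norm of order $\eps^{(d-2)/2}$, which is precisely the scale at which the nonlinearity of the logarithm becomes subdominant. Since $\E\xi_\eps=0$, Taylor's formula gives
\[
\log u_\eps - \E\log u_\eps = \xi_\eps -\tfrac12\bigl(\xi_\eps^2-\E\xi_\eps^2\bigr)+O(\xi_\eps^3),
\]
and I would show $\E[\xi_\eps^2]=O(\eps^{d-2})$ uniformly in $\eps$ via the replica two-point identity
$\E[u_\eps(s,z_1)u_\eps(s,z_2)]/(\E u_\eps)^2 = \E_{B^1,B^2}\exp\bigl(\beta^2\int_0^{s/\eps^2}R((z_1-z_2)/\eps+B^1_r-B^2_r)\,dr\bigr)$,
which for small $\beta$ is uniformly bounded by the transience of $B^1-B^2$ in $d\ge 3$ and the compact support of $R$. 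The quadratic term is then of order $\eps^{d-2}$ in $L^1$, which after the $\eps^{-(d-2)/2}$ rescaling is negligible; the cubic remainder is handled similarly using higher moment bounds on $\xi_\eps$. It thus suffices to prove the convergence in law of $F_\eps:=(\beta\eps^{(d-2)/2}\E u_\eps)^{-1}\int_{\R^d} g(x)(u_\eps(t,x)-\E u_\eps(t,x))\,dx$.

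\textbf{Clark--Ocone representation and variance identification.} By Clark--Ocone applied to the centered $\F_t$-measurable random variable $u_\eps(t,x)-\E u_\eps(t,x)$,
\[
u_\eps(t,x)-\E u_\eps(t,x)=\int_0^t\!\!\int_{\R^d}\E[D_{s,y}u_\eps(t,x)\,|\,\F_s]\,dW(s,y).
\]
Malliavin differentiation of the rescaled SHE for $u_\eps$ gives $D_{s,y}u_\eps(t,x)=\beta\eps^{(d-2)/2}\int K_\eps(s,z;t,x)\,\varphi_\eps(z-y)\,u_\eps(s,z)\,dz$, where $K_\eps$ is the SHE propagator on $[s,t]$ (independent of $\F_s$), $\varphi_\eps(\cdot)=\eps^{-d}\varphi(\cdot/\eps)$, and $\E[K_\eps(s,z;t,x)]=G_{t-s}(x-z)$ is the heat kernel. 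Squaring the resulting integrand, integrating over $y$ using $\int\varphi_\eps(z_1-y)\varphi_\eps(z_2-y)\,dy=\eps^{-d}R((z_1-z_2)/\eps)$, and changing variables $z_2=z_1+\eps w$, the $\beta^2\eps^{d-2}$ factors exactly cancel the normalization of $F_\eps$, leaving for the quadratic variation
\[
\frac{1}{(\E u_\eps)^2}\int_0^t\!\!\int\!\!\int R(w)\,u_\eps(s,z_1)u_\eps(s,z_1+\eps w)\,(G_{t-s}*g)(z_1)(G_{t-s}*g)(z_1+\eps w)\,dw\,dz_1\,ds.
\]
As $\eps\to 0$, the two-point function $\E[u_\eps(s,z_1)u_\eps(s,z_1+\eps w)]/(\E u_\eps)^2$ tends to the infinite-time two-replica expectation, and the identity $B^1-B^2\stackrel{d}{=}\sqrt 2 B$ together with the time rescaling $r\mapsto 2r$ converts this two-Brownian expression into the single-Brownian one in~\eqref{e.effvar}. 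The limiting quadratic variation is therefore $\nu_{\mathrm{eff}}^2\int_0^t\int(G_{t-s}*g)^2(y)\,dy\,ds$, which matches the variance of $\int g\,\U$ for $\U$ solving~\eqref{e.defU}. The martingale CLT then yields the claimed convergence in law.

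\textbf{Main obstacle.} The most delicate piece is the in-probability convergence of the quadratic variation, which together with the uniform $L^2$ bound on $\xi_\eps$ both hinge on uniform finiteness of the stationary two-point function; this is a Gaussian multiplicative chaos-type estimate requiring $\E_B\exp(\tfrac12\beta^2\int_0^\infty R(x+B_s)ds)<\infty$, which is exactly what fixes the threshold $\beta_0(d,\varphi)$. A subsidiary but important task is justifying the log expansion: one must control uniform negative moments $\E[u_\eps^{-p}]$ (or, equivalently, high moments of the ratio appearing in $D\log u_\eps=(Du_\eps)/u_\eps$), which I would obtain via a Feynman--Kac representation combined with Girsanov shifts or by working directly with the Malliavin derivative of $\log u_\eps$ through a Wiener chaos decomposition. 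Once these two ingredients---the partition-function bound and the negative-moment control---are in place, the remaining steps (Clark--Ocone representation, variance computation, and martingale CLT) are relatively mechanical.
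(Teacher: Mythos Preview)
Your linearization step has a fundamental gap. You assert that $\xi_\eps=u_\eps/\E u_\eps-1$ has $L^2(\Omega)$ norm of order $\eps^{(d-2)/2}$, but this is false pointwise: for each fixed $x$, $u_\eps(t,x)$ converges in law to the nondegenerate random variable $Z_\infty$ of \eqref{e.zinfinity}, so $\E[\xi_\eps(t,x)^2]\to\Var[Z_\infty]>0$. The replica identity you wrote down, evaluated at $z_1=z_2$, confirms this: it is bounded for small $\beta$ but does not tend to zero. The scale $\eps^{(d-2)/2}$ appears only \emph{after} integrating against $g$, reflecting the slow $|x|^{-(d-2)}$ decay of spatial correlations, not any pointwise smallness of $u_\eps-1$. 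Consequently the quadratic term $\int g(x)\bigl(\xi_\eps^2-\E\xi_\eps^2\bigr)\,dx$ is of the \emph{same} order $\eps^{(d-2)/2}$ as the linear term; indeed this is the case $\mathfrak f(y)=(y-1)^2$ of Theorem~\ref{t.generalf}, whose coefficient $\sigma_{\mathfrak f}=2\Var[Z_\infty]\neq 0$. Every term in the Taylor expansion contributes at leading order, and the expansion does not truncate. The paper makes exactly this point around \eqref{dec602}: the fact that $\log u_\eps$ and $u_\eps-1$ have the same Edwards--Wilkinson limit is a \emph{conclusion} of the analysis (encoded in $\sigma_{\log}=\E[Z_\infty^{-1}\cdot Z_\infty]=1$), not an input obtainable by linearization.

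The paper therefore works directly with $\log Z_\eps$: it applies Clark--Ocone to $X_\eps=\int g\,\log Z_\eps$ using $D\log Z_\eps=DZ_\eps/Z_\eps$, which is where the negative-moment bound (Proposition~\ref{p.negative}) enters. The variance is identified by splitting the stochastic integral at a mesoscopic time $K=\eps^{-\alpha}$ and replacing the denominator $Z(t/\eps^2,\cdot)$ by $Z(K,\cdot)$, which is $\F_s$-measurable for $s\ge K$ and allows an explicit computation. Gaussianity is obtained not via a martingale CLT but via the second-order Poincar\'e inequality, reducing matters to moment bounds on $\|DX_\eps\|_H$ and $\|D^2X_\eps\|_{\mathrm{op}}$. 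Your Clark--Ocone and variance computation for $u_\eps$ is essentially the SHE result \eqref{e.she} and is correct in spirit, but as written it does not prove the KPZ statement.
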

Theorem~\ref{t.mainth} is a particular case of the following 
more general result. 
\begin{theorem}\label{t.generalf}
Assume that a function $\mathfrak{f}\in \C^2(\R_+)$ satisfies 
\[
|\mathfrak{f}(y)|+|\mathfrak{f}'(y)|+|\mathfrak{f}''(y)| \leq M(y^p+y^{-p}), \quad y\in\R_+
\]
for some $M,p>0$. Then, there exists 
$\beta_0=\beta_0(d,\varphi,p,M)$ so that,
for $\beta<\beta_0$, $t>0$ and test function
$g\in \C_c^\infty(\R^d)$, we have 
\begin{equation}
\frac{1}{\beta\eps^{(d-2)/2}}\int_{\R^d}(\mathfrak{f}(u_\eps(t,x))-\E[\mathfrak{f}(u_\eps(t,x))]) g(x)dx\Rightarrow \sigma_{\mathfrak{f}}\int_{\R^d} \U(t,x)g(x)dx
\end{equation}
in distribution as $\eps\to0$, where $\U$ is a solution to \eqref{e.defU} 
and $\sigma_{\mathfrak{f}}=\E[\mathfrak{f}'(Z_\infty)Z_\infty]$. Here, $Z_\infty$ 
is the positive random variable defined in \eqref{e.zinfinity} below with $\E[Z_\infty]=1$.
\end{theorem}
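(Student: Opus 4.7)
The plan is to prove Theorem~\ref{t.generalf} by a Clark--Ocone martingale representation combined with a replacement argument: the microscopic factor $\mathfrak{f}'(\tilde u_\eps)\,\tilde u_\eps$ is shown to average, against the macroscopic Malliavin derivative of $\log \tilde u_\eps$, to the deterministic constant $\sigma_{\mathfrak{f}}$. Throughout I work with the law-equivalent $\tilde u_\eps$ (cf.~\eqref{e.lawV}), driven by $\dot W_\eps$, so that Malliavin calculus with respect to $W$ is natural, and I write $\varphi_\eps(z):=\eps^{-d}\varphi(z/\eps)$.

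Set $F_\eps:=(\beta\eps^{(d-2)/2})^{-1}\int_{\R^d}\mathfrak{f}(\tilde u_\eps(t,x))g(x)\,dx$. The Clark--Ocone formula, the chain rule, and the identity $D_{s,y}\log \tilde u_\eps=D_{s,y}\tilde u_\eps/\tilde u_\eps$ combine to give
\begin{equation}\label{e.planCO}
F_\eps-\E[F_\eps]=\int_0^t\!\int_{\R^d}\E\big[\Phi_\eps(s,y)\mid\F_s\big]\,dW(s,y),
\end{equation}
with $\Phi_\eps(s,y):=(\beta\eps^{(d-2)/2})^{-1}\int \mathfrak{f}'(\tilde u_\eps(t,x))\tilde u_\eps(t,x)D_{s,y}\log \tilde u_\eps(t,x)g(x)\,dx$. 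Differentiating the mild equation for $\tilde u_\eps$ shows that $D_{s,y}\tilde u_\eps(t,x)=\beta\eps^{(d-2)/2}$ times the forward random propagator of the linearized SHE on $[s,t]$, applied to $\tilde u_\eps(s,\cdot)\varphi_\eps(\cdot-y)$. Hence $D_{s,y}\log \tilde u_\eps(t,x)$ depends only on the noise in $(s,t]$ and spreads diffusively, its leading behaviour living on macroscopic scales in $x,y$; in contrast, $\mathfrak{f}'(\tilde u_\eps(t,x))\tilde u_\eps(t,x)$ is a local functional of the noise near $x$ which, by the stabilization $\tilde u_\eps(t,x)\to Z_\infty(x)$ for small $\beta$ and $d\geq 3$, is asymptotically close to $\mathfrak{f}'(Z_\infty)Z_\infty$.

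The main obstacle is to justify the replacement
\[
\mathfrak{f}'(\tilde u_\eps(t,x))\,\tilde u_\eps(t,x)\ \rightsquigarrow\ \sigma_{\mathfrak{f}}=\E[\mathfrak{f}'(Z_\infty)Z_\infty]
\]
inside the integrand of~\eqref{e.planCO}, in an $L^2(\Omega)$ sense uniform in $\eps$. I plan a two-scale coupling argument based on: (i) $L^q$ bounds on $\mathfrak{f}'(\tilde u_\eps)\tilde u_\eps$ furnished by the growth hypothesis on $\mathfrak{f}$ and uniform moment bounds on $\tilde u_\eps$ (available for small $\beta$ in $d\geq 3$); (ii) the approximate independence of $\tilde u_\eps(t,x)$ from the noise outside a microscopic neighborhood of $x$ at time $t$; (iii) averaging of $\mathfrak{f}'(\tilde u_\eps)\tilde u_\eps-\sigma_{\mathfrak{f}}$ against the macroscopic propagator kernel, contributing only a vanishing piece to the variance of the stochastic integral.

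Once the replacement is in place, the quadratic variation of~\eqref{e.planCO} equals $\sigma_{\mathfrak{f}}^2$ times that of the logarithmic case ($\mathfrak{f}=\log$, $\sigma_{\mathfrak{f}}=1$). A direct computation, using the mild equation and the two-point polymer measure associated to $(\tilde u_\eps(s,x'),\tilde u_\eps(s,x''))$, identifies the $\eps\to 0$ limit of this quadratic variation as $\sigma_{\mathfrak{f}}^2\,\nu_{\mathrm{eff}}^2\int_0^t\!\int_{\R^d}(p_{t-s}\!*\!g)(x)^2\,dx\,ds$, with $p_r$ the standard heat kernel and $\nu_{\mathrm{eff}}^2$ as in~\eqref{e.effvar}, matching the quadratic variation of $\sigma_{\mathfrak{f}}\int \U g$ for $\U$ solving~\eqref{e.defU}. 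A martingale CLT, together with a uniform fourth-moment bound obtained from the same $L^q$ estimates, then upgrades convergence of quadratic variations to convergence in distribution, yielding the theorem.
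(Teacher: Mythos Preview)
Your overall strategy---Clark--Ocone representation, replacement of $\mathfrak{f}'(\tilde u_\eps)\tilde u_\eps$ by $\sigma_{\mathfrak{f}}$, reduction to the logarithmic case---is the right one and is what the paper does. But the replacement step as you describe it does not work, because the decoupling you rely on is false. Your claim that ``$D_{s,y}\log\tilde u_\eps(t,x)$ depends only on the noise in $(s,t]$'' is incorrect: writing $D_{s,y}\tilde u_\eps(t,x)$ as the forward propagator on $[s,t]$ applied to $\tilde u_\eps(s,\cdot)\varphi_\eps(\cdot-y)$ and dividing by $\tilde u_\eps(t,x)$ does \emph{not} cancel the dependence on $\tilde u_\eps(s,\cdot)$; in the polymer picture $D_{s,y}\log u$ is the polymer-measure expectation of $\varphi_\eps(B_s-y)$, and the polymer measure is built from the noise on all of $[0,t]$. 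Likewise, $\tilde u_\eps(t,x)$ is not ``a local functional of the noise near $x$'' in any sense strong enough for (ii): its dependence on the noise is diffusive, on the macroscopic scale. So $\mathfrak{f}'(\tilde u_\eps(t,x))\tilde u_\eps(t,x)$ and $D_{s,y}\log\tilde u_\eps(t,x)$ are genuinely correlated through the noise on $[0,s]$, and there is no mechanism in your plan to control $\E[(\mathfrak{f}'(\tilde u_\eps)\tilde u_\eps-\sigma_{\mathfrak{f}})\,D_{s,y}\log\tilde u_\eps\mid\F_s]$.

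The paper's fix is to introduce an intermediate microscopic time $K=\eps^{-\alpha}$ and replace $\mathfrak{f}'(Z(t/\eps^2,x/\eps))$ by $\mathfrak{f}'(Z(K,x/\eps))$; the error is an $I_{2,\eps}$-type term, controlled via $\|Z(t/\eps^2,\cdot)-Z(K,\cdot)\|_2\to 0$ and the growth bound on $\mathfrak{f}''$. For $s\ge K$ the frozen factor $\mathfrak{f}'(Z(K,x/\eps))$ is $\F_s$-measurable and exits the conditional expectation, after which the martingale property $\E[M(t/\eps^2,\cdot)\mid\F_s]=M(s,\cdot)$ makes the variance computation explicit. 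The limiting variance then contains $\E[\zeta(K,x_1/\eps)\zeta(K,x_2/\eps)]$ with $\zeta=\mathfrak{f}'(Z)Z$, and one shows separately that $\E[\zeta(K,0)]\to\sigma_{\mathfrak{f}}$ and that $\mathrm{Cov}[\zeta(K,x_1/\eps),\zeta(K,x_2/\eps)]\to 0$ for $x_1\ne x_2$---this last step is where your heuristic (iii) becomes rigorous, but only after the freezing at time $K$. A secondary gap: your final appeal to ``a martingale CLT with a fourth-moment bound'' requires convergence of the predictable quadratic variation \emph{in probability}, not merely of its expectation; the paper avoids this by using the second-order Poincar\'e inequality, whose inputs (moments of $\|DX_\eps\|_H$ and $\|D^2X_\eps\|_{\mathrm{op}}$) carry over from the logarithmic case verbatim.
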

Throughout the paper, we assume that
$\beta\in(0,\beta_0)$ as in the statements of Theorems \ref{t.mainth} and 
\ref{t.generalf}.

\subsection{The context}
The study of the KPZ equation has witnessed important progress in recent years. A lot of work was done in $d=1$, including 
making sense of the equation without relying on the Hopf-Cole transform 
\cite{gubinelli2015paracontrolled,gubinelli2017kpz,hairer2013solving,hairer2014theory,kupiainen2016renormalization}, proving the weak/strong 
universality conjecture in the one-dimensional KPZ universality class 
\cite{alberts2014intermediate,amir2011probability,bertini1997stochastic,matetski2016kpz}, etc. We refer to the reviews 
\cite{corwin2012kardar,quastel2015one} for a more complete list of references. 
In $d=2$, some relevant results can be found in 
\cite{bertini1998two,caravenna2015universality,CSZ18,CRS18,chatterjee2018constructing,feng2016rescaled,toninelli20172}. 

Our result in $d\geq 3$ can be viewed as a continuation of the previous works on the stochastic heat equation (SHE) \cite{dunlap2018random,GRZ17} and as a counterpart of the recent work of Magnen and Unterberger \cite{magnen2017diffusive},  where the driving force is mollified in both temporal and spatial variables. While the proof in \cite{magnen2017diffusive} is based on a multiscale expansion
and a calculation of  multi-point correlation functions, we present a probabilistic proof using the tools of Malliavin calculus. 

It is well-known that in $d\geq3$ there is a phase transition as a
function of 
the coupling constant~$\beta$, also known as the inverse temperature 
if we view the solution to \eqref{e.maineq} as the partition function of a directed polymer in a random environment, and the behaviors of the solution to 
the equation and the underlying polymers change drastically for different values of $\beta$. There are different notions of critical temperatures in $d
\geq3$ \cite{comets2}. For our analysis, what is particularly important is that we stay deep in the weak-disorder/high-temperature regime where $\beta$ is 
small, the $L^2(\Omega)$ norm of~$u(t,x)$ is bounded uniformly in $t>0$, and the effective variance in \eqref{e.effvar} is finite. For directed polymers, the diffusive behavior was proved in the early 
work of~\cite{bolthausen1989note,imbrie1988diffusion}, and we refer to \cite{comets2} for a review of further developments and 
\cite{mukherjee2017quench,mukherjee2016weak} for the connection to the
stochastic heat equation.

\subsection{Connection to the stochastic heat equation}

We note that the coefficient in front of the nonlinear term in \eqref{e.kpz} is small when $d\geq3$, and if we naively ignore the nonlinear term, 
the limiting equation for $\tilde{h}_\eps$ would be 
\[
\partial_t \bar h=\frac12\Delta \bar h+\nu \dot{W}(t,x),
\]
with 
\[
\nu^2=\int_{\R^d} R(x)dx<\nu_{\mathrm{eff}}^2,
\] 
since $\dot{W}_\eps(t,x)\to \nu \dot{W}(t,x)$ as $\eps\to0$. Thus, Theorem~\ref{t.mainth} shows that the ``optically small'' nonlinear term affects   
the effective variance asymptotically, even in the limit $\eps\to 0$,
when $\beta$ is small but does not change fundamentally the nature of the Edwards-Wilkinson
limit. Let us explain
informally why this happens. As we have shown in~\cite{dunlap2018random}, when $\beta$ is small,
the 
solution of (\ref{e.maineq}), before any space-time rescaling, behaves at large times approximately
as a space-time stationary random field $\Psi(t,x)$ with spatial correlations that decay, when the potential is white in time, as 
\begin{equation}\label{dec902}
\hbox{Cov}({\Psi}(0,0),{\Psi}(0,{y}))\sim \frac{\bar{c}\beta^2\nu_{\mathrm{eff}}^2 }{|y|^{d-2}},~~|y|\gg 1,
\end{equation}
with a universal constant $\bar c$. Thus, $h(t,x)=\log u(t,x)$
is approximately~$\log\Psi(t,x)$ with a similar decay of correlations. The law of large numbers implies then 
that $h(t,x)$ converges as a random distribution (after a spacetime rescaling and integration against a smooth test function) to a constant. The spatial decay rate of correlations in (\ref{dec902})  indicates that, as a distribution, 
$\bar h(t,x)=h(t,x)-\E[h(t,x)]$ is of size~$\eps^{(d-2)/2}$  when integrated against a test function on the scale $\eps^{-1}$. 
In other words, $\bar h(t,x)$ is close to zero as a random distribution but not point-wise. On a technical note,
the slow spatial decay of correlations of~$\Psi(t,x)$ does not allow us to apply the 
central limit theorem directly to integrals on the macroscopic scale~$\eps^{-1}$, 
thinking of the integral as a sum over~$\eps^{-d}$ 
boxes. 
In a sense, the hard work is to incorporate the fast  temporal 
mixing of~$V(t,x)$ into the picture, with the help of the Feynman-Kac formula.
Ignoring this serious technical issue, the correlation structure of~$\log\Psi(t,x)$ then dictates the rescaling by the factor $\eps^{(d-2)/2}$ in~(\ref{nov602})
that, in turn, shows up as the factor $\eps^{(d-2)/2}$ in front of the nonlinear term in \eqref{e.kpz}. However, as~$\bar h(t,x)$ is not close
to a constant pointwise, this term is not ``$\eps$-wise'' small in 
the pointwise  sense, only~``$\beta$-wise'' small, 
and thus makes a non-trivial contribution to the effective
variance,  of order~$o(1)$ as $\beta\to 0$, that survives in the limit~$\eps\to 0$. The 
precise Edwards-Wilkinson nature of the limit comes  
as a combination of the Gaussianity coming from the central  limit
theorem, modulo the technical difficulties discussed above,   and the heat semi-group.

In \cite[Theorem 1.2]{GRZ17}, it was
proved that the fluctuations of $u_\eps$ are given by the same Edwards-Wilkinson model:
\begin{equation}\label{e.she}
\frac{1}{\beta \eps^{(d-2)/2}}\int_{\R^d}(u_\eps(t,x)-\E[u_\eps(t,x)]) g(x)\Rightarrow \int_{\R^d} \U(t,x)g(x)dx, \mbox{ as } \eps\to0,
\end{equation}
which can also be viewed as a special case of Theorem~\ref{t.generalf} with $\mathfrak{f}(y)=y$.
In other words, when viewed as random fields, 
$u_\eps(t,\cdot)$ and $\log u_\eps(t,\cdot)$ have
the same limiting distribution!  While it is unclear at first glance
why this should be the case, the proof in this paper helps illustrate the connection, see the discussion in Remark~\ref{r.kpz-she}.  

Let us make a couple of remarks on Theorem~\ref{t.generalf}. First,
if we take $\mathfrak{f}(y)=\log y-y$ in Theorem~\ref{t.generalf}, 
then
\[
\sigma_{\mathfrak{f}}=1-\E[Z_\infty]=0,
\]
hence
\begin{equation}\label{e.errorgotozero}
\frac{1}{\eps^{(d-2)/2}}\int_{\R^d}( \mathfrak{f}(u_\eps(t,x))-\E[\mathfrak{f}(u_\eps(t,x))]) g(x)dx\to0
\end{equation}
in probability as $\eps\to0$. In other words, we have
\begin{equation}\label{dec602}
\eps^{-(d-2)/2}(\log u_\eps(t,\cdot)-\E[ \log u_\eps(t,\cdot)])\approx \eps^{-(d-2)/2}[u_\eps(t,\cdot)-1]
\end{equation}
as random distributions. We stress that (\ref{dec602}) is not a simple
consequence of  a linearization of $\log y$ around $y=1$.  
For example, for $\mathfrak{f}(y)=y^2$, we have 
\[
\sigma_{\mathfrak{f}}=2\E[Z_\infty^2]>2\E[Z_\infty]^2=2,
\]
which is different from the coefficient obtained by the 
linearization $\mathfrak{f}(y)\approx 2y-1$ near $y=1$. 
Such a linearization fails 
in general precisely because $u_\eps(t,x)$ is not close to $1$ but rather approaches
a stationary solution in the long time limit, as discussed above. 

Second, if we recall the connection between the effective variance
in the Edwards-Wilkinson equation and the decay of correlations rate in (\ref{dec902}) for $\Psi(t,x)$, and note that a similar connection
holds for $\mathfrak{f}(\Psi(t,x))$, 
Theorem~\ref{t.generalf} says that if $\Psi(t,x)$ satisfies (\ref{dec902}), then
the correlations of~$\mathfrak{f}(\Psi(t,x))$    decay as
\begin{equation}\label{dec904}
\hbox{Cov}(\mathfrak{f}({\Psi}(0,0)),\mathfrak{f}({\Psi}(0,{y})))\sim \frac{\bar{c}\beta^2\sigma_{\mathfrak{f}}^2 \nu_{\mathrm{eff}}^2 }{|y|^{d-2}},~~|y|\gg 1,
\end{equation}
with $\sigma_{\mathfrak{f}}=\E[\mathfrak{f}'(\Psi(0,0))\Psi(0,0)]$. 
As explained in Remark~\ref{r.sigmaf} below, this is consistent with $\log\Psi(t,x)$ being a ``Gaussian random field on large scales''.

For a potential that is smooth in both $x$ and $t$ variable, such as 
\[
V(t,x)=\int_{\R^{d+1}}\phi(t-s)\varphi(x-y)\dot{W}(s,y)dyds
\]
 for some $\phi\in\C_c^\infty(\R)$, a result similar to \eqref{e.she} was proved in \cite[Theorem 1.1]{GRZ17}, where the limiting Edwards-Wilkinson 
 equation has also an effective diffusivity:
 \[
 \partial_t \cU=\frac12\nabla\cdot a_{\mathrm{eff}}\nabla \cU+\nu_{\mathrm{eff}} \dot{W}.
 \]
This is consistent with the result in \cite{magnen2017diffusive}, where the same limit is proved for the KPZ equation. The effective diffusion matrix $a_{\mathrm{eff}}$ comes from the temporal correlation of the randomness, 
which was previously discussed in \cite{betz2005central,gubinelli2006gibbs}. In \cite{GRZ17,mukherjee2017central}, as a crucial ingredient of the proof, a Markov chain was constructed to model the evolution of the path increments and the fast mixing of the Markov chain drives a central limit theorem which gives rise to the effective diffusivity. We believe 
that 
the approach developed in this paper, combined with the Markov chain techniques used in \cite{GRZ17}, will give another proof of the result 
in \cite{magnen2017diffusive} for random potentials that are not white in time. We choose to work in the white in time setting since it is technically simpler to
explain, but is also illustrative enough to reveal the main idea of the proof for the 
general case. 

All the aforementioned results are on the asymptotics of the random fields after a spatial averaging. For the pointwise fluctuations, we refer to the recent work \cite{CometsMukh,comets1}. In the weak-coupling regime, some discussion on the pointwise fluctuations can be found in \cite{meerson}.

From the expression of $\nu_{\mathrm{eff}}^2(\beta)$ in \eqref{e.effvar}, we can define
\[
\beta_{\mathrm{critical}}=\sup \{ \beta>0: \nu_{\mathrm{eff}}^2(\beta)<\infty\},
\]
and refer $\beta\in (0,\beta_{\mathrm{critical}})$ as the $L^2-$region. It is unclear if our approach in this paper can be refined to cover the whole range of $\beta\in(0,\beta_{\mathrm{critical}})$. The recent work \cite{CRS18} used a different method and proved a similar result for  $d=2$ in the whole $L^2-$region.

\medskip

\subsection{Notation}
\label{sec-notation}
We use throughout the following notations and conventions.

(i) We use $a \les b$ for $a\leq Cb$ for some constant $C$ which is 
independent of $\eps$ but
may depend on $t$.

(ii) We use $(p,q)$ to denote the H\"older exponents $\tfrac{1}{p}+\tfrac{1}{q}=1$, and always choose $p\gg1$.

(iii) $G_t(x)=(2\pi t)^{-d/2}\exp(-|x|^2/2t)$ denotes the standard heat kernel.

(iv)
We let 
$H$ denote the Hilbert space $L^2(\R^{d+1})$, with norm
$\|\cdot\|_H$ and inner product $\la\cdot,\cdot\ra_H$.

(v) $\{B^j_t,W^j_t: t\geq0, j=1,\ldots\}$ is a family of standard independent $d-$dimensional Brownian motions built on another probability space $(\Sigma,\mathcal{A},\tilde{\Pb})$.  We will use $\E_B,\E_W,\Pb_B,\Pb_W$ when taking the expectation and the probability with respect to $B,W$ separately.

(vi) We use $d_{\mathrm{TV}}(\cdot,\cdot)$ to denote the total variation 
distance between two distributions, and if $X,Y$ are random variables of laws $\mu_X,\mu_Y$, we write
$d_{\mathrm{TV}}(X,Y)$ for $d_{\mathrm{TV}}(\mu_X,\mu_Y)$.

(vii) We let $\|\cdot \|_{\mathrm{op}}$ denote the operator norm.
\medskip

\noindent{\bf Acknowledgments.} We thank the two anonymous referees for a very careful reading
of the manuscript and many helpful suggestions to improve the presentation. AD was supported  by an NSF Graduate Research Fellowship, YG by NSF grant
DMS-1613301/1807748/1907928 and the Center for Nonlinear Analysis of CMU, LR by NSF grant DMS-1613603 and ONR grant N00014-17-1-2145, and
OZ by an Israel Science Foundation grant and  funding from the European Research Council (ERC) under the European Unions Horizon 2020 research and innovation programme (grant agreement No. 692452).

\section{Sketch of the proof}\label{s.sketch}

We rely on the Feynman-Kac representation of the solution to \eqref{e.maineq}:
\begin{equation}\label{e.fkre}
u(t,x)=\E_B\left[\exp\Big\{\beta \int_0^t V(t-s,x+B_s)ds -\frac12\beta^2R(0)t\Big\}\right],
\end{equation}
which has the same distribution, viewed as a random field in $x$, with $t$ fixed, as 
\begin{equation}\label{dec312}
Z(t,x)=\E_B[M(t,x)],
\end{equation}
with 
\begin{equation}\label{dec314}
M(t,x)=\exp\left(\beta\int_0^t V(s,x+B_s)ds-\frac12\beta^2R(0)t\right).
\end{equation}
We used the notation $Z(t,x)$ since it can be viewed as the partition function of a directed polymer of length $t$ and starting at $x$. For fixed $B$ and $x$, $M(\cdot,x)$ is a martingale. By \cite[Theorem 2.1]{mukherjee2016weak}, 
for $\beta$ small enough,
\begin{equation}\label{e.zinfinity}
\lim_{t\to\infty} Z(t,0)=Z_\infty
\end{equation}
almost surely, where $Z_\infty$ is a positive random variable satisfying $\E[Z_\infty]=1$.
Defining
\[
Z_\eps(t,x)=Z(\frac{t}{\eps^2},\frac{x}{\eps}), \     \ M_\eps(t,x)=M(\frac{t}{\eps^2},\frac{x}{\eps}),
\]
it suffices to consider the random variable
\[
X_\eps(t)=\int_{\R^d} \log Z_\eps(t,x)g(x)dx.
\]
The main result \eqref{e.mainre} is equivalent to 
\[
\frac{1}{\eps^{(d-2)/2}}(X_\eps(t)-\E[X_\eps(t)])\Rightarrow \beta\int_{\R^d}\U(t,x) g(x)dx.
\]
Throughout the paper, the temporal variable $t>0$ is fixed, so sometimes we will omit the dependence. Let us define
\begin{equation}\label{e.defsigma}
\begin{aligned}
\sigma_t^2&=\beta^2\Var\Big[\int_{\R^d} \U(t,x)g(x)dx\Big]\\
&=\beta^2 \Var\Big[ \int_{\R^d} g(x) \left(\nu_{\mathrm{eff}}\int_0^t\int_{\R^d} G_{t-s}(x-y)dW(s,y)\right)dx\Big]\\
&=\beta^2 \nu_{\mathrm{eff}}^2 \int_0^t \int_{\R^{2d}} g(x_1)g(x_2) G_{2s}(x_1-x_2)dx_1dx_2ds,
\end{aligned}
\end{equation}
where we recall 
$G_t(x)$ is the centered Gaussian density of variance $t$ in 
$\R^d$. The proof of Theorem~\ref{t.mainth} consists of two steps:

\begin{proposition}\label{p.convar}
Under the assumption of Theorem~\ref{t.mainth}, as $\eps\to0$, 
\[
\eps^{-(d-2)}\Var[X_\eps(t)]\to \sigma_t^2.
\]
\end{proposition}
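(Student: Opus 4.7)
My plan is to apply the Clark-Ocone formula from Malliavin calculus. Since $X_\eps(t)$ is a square-integrable functional of the noise $\dot W$ restricted to $[0,t/\eps^2]$, Clark-Ocone combined with the It\^o isometry will give
\[
\Var[X_\eps(t)] = \int_0^{t/\eps^2}\int_{\R^d} \E\Bigl[\bigl|\E[D_{s,y}X_\eps(t)\mid\F_s]\bigr|^2\Bigr]\, dy\, ds,
\]
reducing the problem to computing the conditional expectation of the Malliavin derivative of $\log Z_\eps(t,x)$, integrating twice against $g$, and tracking the asymptotics as $\eps\to 0$.

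Differentiating the Feynman-Kac representation \eqref{dec312}--\eqref{dec314}, with $T=t/\eps^2$ and $X=x/\eps$, will yield the polymer-measure identity
\[
D_{s,y}\log Z(T,X) = \beta\,\mathbbm{1}_{[0,T]}(s)\,\hat{\E}_{T,X}\bigl[\varphi(X+B_s-y)\bigr],\qquad \hat{\Pb}_{T,X}(dB) := \tfrac{M(T,X)}{Z(T,X)}\,\Pb_B(dB).
\]
To evaluate $\E[\,\cdot\mid\F_s]$ of this expression I will use the multiplicative decomposition $M(T,X)=M(s,X)\,M_s^T(X,B)$. By the Markov property of $B$ at time $s$ combined with the independence of $V|_{[s,T]}$ from $\F_s$, one has $\E_B[M_s^T(X,B)\mid B_s = z] = \tilde Z(T-s, X+z)$ for an independent copy $\tilde Z$ of $Z$ (built from the shifted noise $V(\cdot+s,\cdot)$), with $\E\tilde Z = 1$. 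The denominator $Z(T,X)$ is not $\F_s$-measurable, but by \cite{mukherjee2016weak} the $L^2$-martingale $Z(T,X)$ converges to a positive limit $Z_\infty(X)$; for $\beta$ small enough, I would approximate $Z(T,X)$ by $Z_\infty(X)$ in $L^p$ using the uniform negative-moment bounds on $Z$ available in the $L^2$-region.

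After these reductions, expanding the square, integrating out $y$ via $\int_{\R^d}\varphi(X_1+B^1_s - y)\varphi(X_2+B^2_s-y)\,dy = R(X_1-X_2+B^1_s-B^2_s)$ with two independent Brownian motions $B^1,B^2$, and using $\E_W[M(s,X_1)M(s,X_2)] = \exp\bigl(\beta^2\int_0^s R(X_1-X_2+B^1_\tau-B^2_\tau)\,d\tau\bigr)$, I arrive at a double $x_1,x_2$ integral that, after rescaling $s\mapsto s/\eps^2$, factorizes into (i) a Gaussian part giving the heat kernel $G_{2s}(x_1-x_2)$ from the diffusion of $B^1-B^2$ on the macroscopic scale, and (ii) a small-scale two-path interaction that, in the limit $\eps\to 0$, produces the factor $\nu_{\mathrm{eff}}^2$ via the identity $B^1_\tau-B^2_\tau\stackrel{d}{=}B_{2\tau}$, which after a change of variable introduces the $\tfrac12$ in $\tfrac12\beta^2\int_0^\infty R$ as in \eqref{e.effvar}. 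The product of these two factors integrated against $g(x_1)g(x_2)$ over $[0,t]$ is precisely $\sigma_t^2$.

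The hardest step I expect will be the quantitative analysis of the polymer quotient $\hat\E_{T,X}[\,\cdot\,]$ and its conditional expectation given $\F_s$: controlling $1/Z(T,X)$ uniformly in $(s,X)$ in $L^p$ for large $p$, handling the boundary regimes $s\approx 0$ and $s\approx t/\eps^2$ where the Markov splitting degenerates, and justifying the interchange of the $\eps\to 0$ limit with the $s$- and $y$-integrations. The small-$\beta$ hypothesis enters both through the uniform $L^p$ bounds on $Z$ and $Z^{-1}$ and through the finiteness of $\nu_{\mathrm{eff}}^2$.
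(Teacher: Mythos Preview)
Your overall strategy matches the paper's: Clark--Ocone to write $X_\eps - \E[X_\eps]$ as a stochastic integral, Feynman--Kac for the Malliavin derivative, the martingale property of $M$ to compute the numerator of the conditional expectation, and then the two-replica calculation that produces $G_{2s}(x_1-x_2)$ and $\nu_{\mathrm{eff}}^2$. You also correctly single out the denominator $Z(T,X)$ inside $\E[\,\cdot\mid\F_s]$ as the hardest step.

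The gap is in your proposed fix for that step. Replacing $Z(T,X)$ by $Z_\infty(X)$ in $L^p$ does not help: $Z_\infty(X)$ is measurable with respect to the \emph{full} noise $\sigma$-field, not $\F_s$, so you still cannot pull it outside $\E[\,\cdot\mid\F_s]$, and the ratio remains intractable. What the paper does instead is introduce an intermediate time scale $K=\eps^{-\alpha}$ with $\alpha\in(0,2)$ and decompose the Clark--Ocone integral into three pieces $I_{1,\eps}+I_{2,\eps}+I_{3,\eps}$. The piece $I_{1,\eps}$ over $s\in[0,K]$ contributes $o(\eps^{d-2})$. On $s\in[K,t/\eps^2]$ the denominator $Z(t/\eps^2,x/\eps)$ is replaced by $Z(K,x/\eps)$, which \emph{is} $\F_s$-measurable for every $s\ge K$ and which approximates $Z(t/\eps^2,x/\eps)$ via the quantitative rate $\E[|Z(K,0)-Z(t/\eps^2,0)|^2]\le CK^{-(d-2)/2}$; the replacement error is $I_{2,\eps}$, again $o(\eps^{d-2})$. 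Only after this does the conditional expectation in the main term $I_{3,\eps}$ become the explicit quantity $\E_B[M(s,x/\eps)\Phi^\eps_{t,x,B}(s,y)]/Z(K,x/\eps)$, and from there the rest of your sketch (heat kernel, Brownian-bridge factor giving $\nu_{\mathrm{eff}}^2$) goes through essentially as you describe. In short, the correct surrogate for the denominator is $Z(K,x/\eps)$ at a mesoscopic $K$, not $Z_\infty$; your intuition that the denominator stabilises is right, but one needs the $\F_s$-measurable truncation to exploit it.
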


\begin{proposition}\label{p.gauss}
Under the assumption of Theorem~\ref{t.mainth}, as $\eps\to0$,
\[
\frac{X_\eps(t)-\E[X_\eps(t)]}{\sqrt{\Var[X_\eps(t)]}}\Rightarrow N(0,1).
\]
\end{proposition}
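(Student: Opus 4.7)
The plan is to apply the Nourdin--Peccati Malliavin--Stein bound on the Wiener space of $\dot W$. Write $F_\eps := X_\eps(t)-\E[X_\eps(t)]$ and $\sigma_\eps^2 := \Var F_\eps$; by Proposition \ref{p.convar}, $\sigma_\eps^2 \sim \sigma_t^2\,\eps^{d-2}$. Let $D$ denote the Malliavin derivative and $L$ the Ornstein--Uhlenbeck generator on $\dot W$. The standard Nourdin--Peccati estimate
\[
d_{\mathrm{TV}}\!\left(F_\eps/\sigma_\eps,\,N(0,1)\right) \;\le\; \frac{2}{\sigma_\eps^2}\sqrt{\Var\bigl(\la DF_\eps,\,-DL^{-1}F_\eps\ra_H\bigr)}
\]
reduces the task to showing $\Var(\la DF_\eps,-DL^{-1}F_\eps\ra_H) = o(\eps^{2(d-2)})$ as $\eps\to 0$.

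The first step is to compute $DF_\eps$ explicitly. Since $V$ is linear in $\dot W$ with kernel $\varphi$, the Feynman--Kac representation \eqref{dec312}--\eqref{dec314} and the chain rule give, for $0\le s\le t/\eps^2$,
\[
D_{s,y}\log Z_\eps(t,x) \;=\; \beta\,\frac{\E_B\bigl[M_\eps(t,x)\,\varphi(x/\eps+B_s-y)\bigr]}{Z_\eps(t,x)},
\]
so $DF_\eps(s,y) = \int_{\R^d} g(x)\,D_{s,y}\log Z_\eps(t,x)\,dx$. The $Z_\eps^{-1}$ factor is the principal technical nuisance: the argument will require uniform bounds $\sup_\eps\sup_x \E[Z_\eps(t,x)^{-p}]<\infty$ for all $p$, which in the weak-disorder regime $\beta<\beta_0$ follow from the $L^p$-boundedness of the positive martingale $M_\eps$ together with the a.s.\ positivity of its limit $Z_\infty(x)$.

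Next I would represent $-DL^{-1}F_\eps = \int_0^\infty e^{-r}\widetilde P_r(DF_\eps)\,dr$ by Mehler's formula, where $\widetilde P_r$ is the OU semigroup realized by substituting $e^{-r}\dot W + \sqrt{1-e^{-2r}}\dot W'$ for $\dot W$, with $\dot W'$ an independent copy. Expanding $\la DF_\eps,-DL^{-1}F_\eps\ra_H$ yields an integral over two test points $x_1,x_2$ (from $g\otimes g$) and two independent Brownian motions $B^1,B^2$ (from the two copies of $\E_B$); after integrating in $y$ one is left with the Brownian overlap
\[
\mathcal{I}_\eps(x_1,x_2;B^1,B^2) \;:=\; \beta^2\int_0^t R\!\left(\tfrac{x_1-x_2}{\eps}+B^1_{s/\eps^2}-B^2_{s/\eps^2}\right)ds,
\]
weighted by the ratios $M_\eps(t,x_i)/Z_\eps(t,x_i)$. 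Its expectation reproduces $\sigma_\eps^2$, consistent with Proposition \ref{p.convar}.

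Taking an additional variance doubles the structure to four test points $x_1,\dots,x_4$ and four Brownian motions $B^1,\dots,B^4$. The main work is to show that the \emph{connected} four-point contributions, i.e.\ those that survive the subtraction of $\sigma_\eps^4$, gain at least one extra factor of $\eps^{d-2}$ relative to the disconnected ones; heuristically, any forced interaction between the two Malliavin replicas creates an additional overlap integral of the above type, whose Brownian expectation at separation $|x_i-x_j|=O(\eps^{-1})$ is $O(\eps^{d-2})$ by transience of Brownian motion in $d\ge 3$. Combined with the negative-moment bounds on $Z_\eps$, this should yield the desired $o(\eps^{2(d-2)})$. The principal obstacle I foresee is the combinatorial bookkeeping of these connected diagrams through the nonlocal ratios $D_{s,y}Z_\eps/Z_\eps$ and the Mehler semigroup; this is where the smallness $\beta<\beta_0$ is consumed twice, once for the negative-moment bounds on $Z_\eps$ and once for the convergence of the series of overlap integrals entering the analog of $\nu_{\mathrm{eff}}^2$.
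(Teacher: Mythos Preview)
Your proposal and the paper share the Malliavin--Stein starting point but diverge at the crucial technical step. The paper does not attempt to control $\Var\bigl(\la DF_\eps,-DL^{-1}F_\eps\ra_H\bigr)$ directly; instead it applies the second-order Poincar\'e inequality of Nourdin--Peccati--Reinert,
\[
d_{\mathrm{TV}}\bigl(F_\eps/\sigma_\eps,\,N(0,1)\bigr)\;\les\;\sigma_\eps^{-2}\,\E[\|DF_\eps\|_H^4]^{1/4}\,\E[\|D^2F_\eps\|_{\mathrm{op}}^4]^{1/4},
\]
which trades the inverse generator $L^{-1}$ for an explicit second Malliavin derivative. This buys a great deal: $D^2\log Z_\eps$ has a closed polymer formula (it splits as $\cP_2-\cP_1$, each a ratio $\E_B[\,\prod M_{\eps,j}\,\Phi\otimes\Phi\,]/Z_\eps^2$), and the operator norm is then controlled via the contraction inequality $\|\cP_k\|_{\mathrm{op}}^4\le\|\cP_k\otimes_1\cP_k\|_{H\otimes H}^2$, reducing everything to four-Brownian overlap estimates in a \emph{single} noise environment.

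Your Mehler route has a genuine gap at the step you pass over. After the substitution $\dot W\mapsto e^{-r}\dot W+\sqrt{1-e^{-2r}}\dot W'$, the denominator in $DF_\eps$ becomes $Z_\eps$ evaluated at the mixed noise, so $\widetilde P_r(DF_\eps)$ is \emph{not} ``weighted by the ratios $M_\eps/Z_\eps$'' in the original environment as you write; it carries a nonlinear functional of both $\dot W$ and $\dot W'$. The variance of $\la DF_\eps,-DL^{-1}F_\eps\ra_H$ then couples $Z_\eps(\dot W)^{-1}$ to $Z_\eps(e^{-r_j}\dot W+\sqrt{1-e^{-2r_j}}\dot W'_j)^{-1}$ for two independent OU parameters $r_1,r_2$ and two independent copies $\dot W'_1,\dot W'_2$, and you give no mechanism for estimating these mixed correlations. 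This is precisely the complication the second-order Poincar\'e inequality is designed to bypass. A secondary caution: the paper's worst term ($\cP_2$, where both tensor legs ride the same Brownian path) gains only $\eps^{(d-2)/4}$ over $\eps^{d-2}$, not a full $\eps^{d-2}$, and requires a separate Fourier computation (Lemma~\ref{l.bin}); the analogous ``self-overlap'' diagrams would arise in your four-point variance as well, so the heuristic ``each forced interaction gains $\eps^{d-2}$'' is too optimistic without further argument.
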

To prove the convergence of the variance, we use the Clark-Ocone formula to write $X_\eps-\E[X_\eps]$ as a stochastic integral with respect to $\dot{W}$. An appropriate decomposition enables us to carry out some explicit calculations. To prove the Gaussianity, we use the second order Poincar\'e inequality \cite{chatterjee2009fluctuations,nourdin2009second} which involves estimating moments of Malliavin derivatives of $X_\eps$ and seems to be particularly handy in this context. 

In the rest of this section, we describe the main steps in the proof.

\subsection{Negative moments}

Throughout the paper, we rely on the existence of negative moments 
of $Z(t,x)$ for small $\beta$, as a quantitative control on the small ball
probability for $Z_\eps(t,x)$.

\begin{proposition}\label{p.negative}
There exits $\beta_0>0$ such that if $\beta<\beta_0$, 
\[
\sup_{t>0}\E[Z(t,x)^{-n}] \leq C_{\beta,n},
\]
for all $n\in{\mathbb N}$.
\end{proposition}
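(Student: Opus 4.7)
My plan is to reduce, via uniform integrability and Jensen's inequality, to bounding negative moments of the almost-sure limit $Z_\infty=\lim_{t\to\infty}Z(t,x)$, and then to control the latter through a stochastic-exponential representation of $Z_t$ combined with exponential-moment bounds on the quadratic variation of $\log Z_t$.

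By spatial stationarity of $V$, $Z(t,x)\stackrel{\text{law}}{=}Z_t:=Z(t,0)$, so I fix $x=0$. For each $B$, the process $t\mapsto M(t,0)$ is the Dol\'eans-Dade exponential of the $W$-martingale $t\mapsto\beta\int_0^tV(s,B_s)\,ds$, and since $B\perp W$, Fubini shows that $\{Z_t\}_{t\ge 0}$ is itself a positive $(\F_t)$-martingale. The second-moment identity
\[
\E[Z_t^2]=\E_{B^1,B^2}\Bigl[\exp\Bigl(\beta^2\int_0^tR(B^1_s-B^2_s)\,ds\Bigr)\Bigr]
\]
is uniformly bounded in $t$ for $\beta<\beta_0$ by a standard Khas'minskii argument exploiting transience of $B^1-B^2$ in $d\ge 3$. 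Hence $Z_t\to Z_\infty>0$ almost surely and in $L^2$, and $Z_t=\E[Z_\infty\mid\F_t]$. Since $z\mapsto z^{-n}$ is convex on $(0,\infty)$, the conditional Jensen inequality gives $Z_t^{-n}\le\E[Z_\infty^{-n}\mid\F_t]$, whence $\sup_t\E[Z_t^{-n}]\le\E[Z_\infty^{-n}]$. It remains to show $\E[Z_\infty^{-n}]<\infty$ for every $n$.

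To this end I would represent $Z_t$ as a stochastic exponential. By Clark-Ocone, $Z_t=1+\int_0^t\!\int\alpha(s,y)\,dW(s,y)$ with $\alpha(s,y)=\beta\E_B[M(s,0)\varphi(B_s-y)]$. Setting $N_t=\int_0^t\!\int\alpha(s,y)/Z_s\,dW(s,y)$, It\^o's formula yields $Z_t=\exp(N_t-\tfrac12\langle N\rangle_t)$, so that
\[
Z_t^{-n}=\exp\bigl(-nN_t-\tfrac{n^2}{2}\langle N\rangle_t\bigr)\cdot\exp\bigl(\tfrac{n(n+1)}{2}\langle N\rangle_t\bigr);
\]
the first factor is the Dol\'eans-Dade exponential of $-nN$, a nonnegative supermartingale of initial value $1$. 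Provided Novikov's criterion $\E[\exp(\tfrac{n^2}{2}\langle N\rangle_\infty)]<\infty$ holds, that factor is a true martingale of mean $1$, inducing a probability measure $\tilde\Pb$ under which
\[
\E[Z_\infty^{-n}]=\tilde\E\bigl[\exp\bigl(\tfrac{n(n+1)}{2}\langle N\rangle_\infty\bigr)\bigr].
\]

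The main obstacle is to bound exponential moments of $\langle N\rangle_\infty$, both under $\Pb$ (for Novikov itself) and under $\tilde\Pb$. Unwinding the square gives
\[
\frac{d\langle N\rangle_t}{dt}=\beta^2\int\bigl(\E_B^\star[\varphi(B_t-y)]\bigr)^2\,dy=\beta^2\,\frac{\E_{B^1,B^2}[M^1(t,0)M^2(t,0)\,R(B^1_t-B^2_t)]}{Z_t^2},
\]
with $\E_B^\star[\cdot]=\E_B[\,\cdot\,M(t,0)]/\E_B[M(t,0)]$ the polymer expectation. Taylor-expanding $\E[\exp(c\langle N\rangle_\infty)]$ in a series then reduces the problem to multi-path polymer partition function estimates of the form $\E\bigl[\prod_{i<j}\exp\bigl(\beta^2\int R(B^i_s-B^j_s)\,ds\bigr)\bigr]$, which in $d\ge 3$ are finite and small in $\beta$ by iterated Khas'minskii arguments exploiting pairwise transience. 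The additional drift produced by the Girsanov shift to $\tilde\Pb$ can be absorbed by the same technique. Choosing $\beta_0$ sufficiently small (depending on $n$) closes the argument.
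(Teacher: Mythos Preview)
Your reduction to $\E[Z_\infty^{-n}]$ via conditional Jensen is fine, but the core of the argument has a circularity that you have not addressed. You correctly compute
\[
\frac{d\langle N\rangle_t}{dt}=\beta^2\,\frac{\E_{B^1,B^2}[M^1(t,0)M^2(t,0)\,R(B^1_t-B^2_t)]}{Z_t^2},
\]
and the factor $Z_t^{-2}$ is the problem. Both the Novikov criterion and the bound on $\tilde{\E}[\exp(\tfrac{n(n+1)}{2}\langle N\rangle_\infty)]$ require exponential moments of $\langle N\rangle_\infty$, and any attempt to control these---by Taylor expansion or otherwise---produces expectations of products $\prod_i Z_{s_i}^{-2}$ against positive weights. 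These are \emph{not} of the form $\E_B[\prod_{i<j}\exp(\beta^2\!\int R(B^i_s-B^j_s)\,ds)]$; the $Z_t^{-2}$ factors are functionals of the noise, not of auxiliary Brownian paths, and controlling them is precisely the negative-moment bound you are trying to prove. The Girsanov drift you would absorb under $\tilde\Pb$ is $-n\alpha(s,y)/Z_s$, which again contains $Z_s^{-1}$, so the ``same technique'' does not close. The only deterministic bound available, $d\langle N\rangle_t/dt\le\beta^2\|\varphi\|_2^2$, grows linearly and is useless for $t\to\infty$.

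There is also a quantifier issue: even if the circularity were broken, your scheme would give a threshold $\beta_0(n)$ (Novikov asks for $\exp(\tfrac{n^2}{2}\langle N\rangle_\infty)$), whereas the proposition asserts one $\beta_0$ valid for all $n$. The paper obtains this by a completely different route, adapting a Gaussian concentration argument of Hu--L\^e: one identifies a set $A_\lambda$ of noise realizations on which $Z\ge\tfrac12$ and the polymer intersection time is at most $\lambda$, shows $\Pb[A_\lambda]\ge c>0$ via Paley--Zygmund and second-moment bounds, proves a Lipschitz-type lower bound $Z(W)\ge\tfrac12 e^{-\sqrt\lambda\,\mathrm{dist}(W,A_\lambda)}$, and then invokes Gaussian isoperimetry to get $\Pb[Z_t\le r]\lesssim\exp(-c(\log r^{-1})^2)$ uniformly in $t$, with $c$ independent of $n$. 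This super-polynomial small-ball bound yields all negative moments simultaneously and sidesteps the circularity entirely.
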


The proof is presented in Appendix~\ref{s.nemm}.

\subsection{The Clark-Ocone representation}

For each realization of the Brownian motion $B$, we can write 
\[
\begin{aligned}
\int_0^{t/\eps^2} V(s,\frac{x}{\eps}+B_s)ds=\int_0^{t/\eps^2} \left(\int_{\R^d} \varphi(\frac{x}{\eps}+B_s-y)\dot{W}(s,y)dy\right)ds
=\int_{\R^{d+1}} \Phi_{t,x,B}^\eps(s,y)dW(s,y),
\end{aligned}
\]
with 
\begin{equation}\label{e.defPhieps}
\Phi_{t,x,B}^\eps(s,y)=\1_{[0,t/\eps^2]}(s) \varphi(\frac{x}{\eps}+B_s-y).
\end{equation}
Therefore, we have
\[
\begin{aligned}
D_{s,y} Z_\eps(t,x)=D_{s,y} \E_B[M_\eps(t,x)]
= \beta \E_B\left[ M_\eps(t,x)\Phi_{t,x,B}^\eps(s,y)\right],
\end{aligned}
\]
where $D_{s,y}$ denotes the Malliavin derivative operator with respect to $\dot{W}$.\footnote{This involves an abuse of notation: we consider 
$D_{s,\cdot}Z_\eps(t,x)$
  as an element of the Hilbert space
  $H_1=L^2(\R^d)$, which is then integrated against the cylindrical 
  white noise $\dot{W}(s,\cdot)$. The Malliavin derivative at time $s$  is then an 
  element of $H_1$, which we write as $D_{s,y}Z_\eps(t,x)$. 
  See e.g. \cite{MZ} for background.}
  By Lemma~\ref{l.delog}, we have 
\[
D_{s,y}\log Z_\eps(t,x)= \frac{D_{s,y} Z_\eps(t,x)}{Z_\eps(t,x)},
\]
and the Clark-Ocone formula gives
\begin{equation} \label{eq-star1}
\begin{aligned}
X_\eps-\E[X_\eps]&=\int_{\R^{d+1}} \E[D_{s,y} X_\eps |\F_s] dW(s,y)
 =\int_{\R^{d+1}} \E\left[ \int_{\R^d} \frac{D_{s,y}Z_\eps(t,x)}{Z_\eps(t,x)}g(x)dx \bigg| \F_s\right]dW(s,y)\\
&=\beta\int_0^{t/\eps^2}\int_{\R^d}\left(\int_{\R^d} g(x)\E\left[ \frac{\E_B[ M_\eps(t,x)\Phi^\eps_{t,x,B}(s,y)]}{Z_\eps(t,x)} \bigg| \F_s\right]dx \right) dW(s,y).
\end{aligned}
\end{equation}
Here, $\F_s$ is the filtration generated by $\dot{W}(\ell, \cdot)$ up to $\ell\leq s$.

For 
\[
K=\eps^{-\alpha}
\]
 with some $\alpha>0$ to be determined, we decompose the  stochastic integral in \eqref{eq-star1}
into three parts:
\[
X_\eps-\E[X_\eps]=\beta(I_{1,\eps}+I_{2,\eps}+I_{3,\eps})
\]
with 
\begin{equation}\label{e.defI1}
I_{1,\eps}=\int_0^K\int_{\R^d}\left(\int_{\R^d} g(x)\E\left[ \frac{\E_B[ M_\eps(t,x)\Phi^\eps_{t,x,B}(s,y)]}{Z_\eps(t,x)} \bigg| \F_s\right]dx \right) dW(s,y),
\end{equation}
\begin{equation}\label{e.defI2}
I_{2,\eps}=\int_{K}^{t/\eps^2} \int_{\R^d}\left(\int_{\R^d} g(x)\E\left[ \frac{\E_B[ M_\eps(t,x)\Phi^\eps_{t,x,B}(s,y)]}{Z(K,x/\eps)}\left(\frac{Z(K,x/\eps)}{Z(t/\eps^2,x/\eps)}-1\right) \bigg| \F_s\right]dx \right) dW(s,y),
\end{equation}
and
\begin{equation}
  \label{e.defI3}
I_{3,\eps}=\int_{K}^{t/\eps^2} \int_{\R^d}\left(\int_{\R^d} g(x)\E\left[ \frac{\E_B[ M_\eps(t,x)\Phi^\eps_{t,x,B}(s,y)]}{Z(K,x/\eps)}\bigg| \F_s\right]dx \right) dW(s,y).
\end{equation}
The goal is to show that if $1\ll K\ll\eps^{-2}$, then
 the contribution from $I_{1,\eps},I_{2,\eps}$ is small compared to that from~$I_{3,\eps}$. For $I_{3,\eps}$, since the integration is in $s\geq K$, the random variable $Z(K,x/\eps)$ is $\F_s-$measurable, thus 
\[
I_{3,\eps}=\int_{K}^{t/\eps^2} \int_{\R^d}\left(\int_{\R^d} \frac{g(x)}{Z(K,x/\eps)}\E\left[ \E_B[ M_\eps(t,x)\Phi^\eps_{t,x,B}(s,y)]\bigg| \F_s\right]dx \right) dW(s,y).
\]
It turns out the inner conditional expectation can be computed explicitly, facilitating the analysis.

\subsection{The second order Poincar\'e inequality}

To simplify the notation, we define 
\[
Y_\eps=\frac{X_\eps-\E[X_\eps]}{\sqrt{\Var[X_\eps]}}.
\]
To show that 
$Y_\eps\Rightarrow N(0,1)$, we apply the second order Poincar\'e inequality \cite[Theorem 1.1]{nourdin2009second}. 
Since~$\E[Y_\eps]=0$ and $\Var[Y_\eps]=1$, 
with $\zeta$ a standard centered Gaussian random variable, we have
\begin{equation}\label{e.2ndpoincare}
d_{\mathrm{TV}}(Y_\eps,\zeta) \les \E[\|DY_\eps\|_H^4]^{1/4} \E[\|D^2Y_\eps\|_{\mathrm{op}}^4]^{1/4}.
\end{equation}
Recall our notation convention from 
Section \ref{sec-notation}; here,
$\|D^2Y_\eps \|_{\mathrm{op}}$ denotes the operator norm of the 
mapping $D^2Y_\eps: H\to H$ given by $h \mapsto  D^2 Y_\eps h$, that is,
\[
\|D^2Y_\eps\|_{\mathrm{op}}=\sup_{h,g\in H, \|h\|_H=\|g\|_H=1} \langle g, D^2Y_\eps h\rangle_H.
\]

It is clear that 
\[
DY_\eps=\frac{DX_\eps}{\sqrt{\Var[X_\eps]}}, \    \ D^2 Y_\eps=\frac{ D^2X_\eps}{\sqrt{\Var[X_\eps]}}.
\]
 Since $\Var[X_\eps]\sim \eps^{d-2}$ by Proposition~\ref{p.convar}, 
 to show $d_{\mathrm{TV}}(Y_\eps,\zeta)\to0$ using \eqref{e.2ndpoincare}, we only need to prove
\begin{equation}\label{e.sept41}
\E[\|DX_\eps\|_H^4]^{1/4} \E[\|D^2X_\eps\|_{\mathrm{op}}^4]^{1/4}=o(\eps^{d-2}),  \mbox{ as } \eps\to0.
\end{equation}

\section{Convergence of the variance}\label{s.convar}

Let us set 
\[
M_{\eps,j}(t,x)=\exp\left(\beta\int_0^{t/\eps^2}V(s,\frac{x}{\eps}+B_s^j)ds-\frac{\beta^2R(0)t}{2\eps^2}\right),
\]
where $B^j$ are independent Brownian motions. For any set $I\subset \R_+, x\in\R^d$ and two Brownian motions~$B^i,B^j$, we define 
\begin{equation}\label{e.defcR}
\cR(I,x,B^i,B^j)=\int_IR(x+B^i_s-B^j_s)ds
\end{equation}
as the weighted
intersection time of $B^i,B^j$ during the time interval $I$, with $x$ being the initial distance. For $I=[0,T]$, we simply write $\cR(T,x,B^i,B^j)$. Recall that $\Phi^\eps_{t,x,B}$ was defined in \eqref{e.defPhieps}, a straightforward calculation gives 
\[
\begin{aligned}
\int_{\R^{1+d}} \Phi^\eps_{t,x_1,B^1}(s,y)\Phi^\eps_{t,x_2,B^2}(s,y)dyds=&\int_0^{t/\eps^2} \left(\int_{\R^d} \varphi(\frac{x_1}{\eps}+B^1_s-y)\varphi(\frac{x_2}{\eps}+B^2_s-y)dy\right)ds\\
=&\cR(\frac{t}{\eps^2},\frac{x_1-x_2}{\eps},B^1,B^2).
\end{aligned}
\]

Before entering the proof, we present the following lemma which will be used repeatedly.

\begin{lemma}\label{l.holder}
For any $n\in\Z_+$ and $q>1$, there exists $\beta(n,q)>0$ such that if $\beta<\beta(n,q)$, then for any random variable $F(B^1,\ldots,B^n)\geq0$, $t>0$ and $\{x_j\in\R^d\}_{j=1,\ldots,n}$, we have
\begin{equation}\label{e.keyes}
\E\Big[ \frac{\E_B[ \prod_{j=1}^n M_{\eps,j}(t,x_j) F(B^1,\ldots,B^n)]}{\prod_{j=1}^nZ_\eps(t,x_j)}\Big] 
\les \E_B[ |F(B^1,\ldots,B^n)|^q]^{1/q}.
\end{equation}
\end{lemma}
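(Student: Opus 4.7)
The plan is to apply Cauchy--Schwarz on the outer $\E$ to separate the denominator from the numerator. The denominator is then controlled by Proposition~\ref{p.negative}; for the numerator, introducing an independent replica of the Brownian family converts the square into a joint annealed quantity whose inner Gaussian expectation over $V$ can be computed explicitly, and a single H\"older step in the joint Brownian expectation peels off $F$, leaving a residual intersection-time exponential that is uniformly integrable in $d\geq 3$ for small $\beta$.

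Concretely, Cauchy--Schwarz gives
\[
\E\Big[\frac{\E_B[\prod_{j=1}^n M_{\eps,j}(t,x_j) F]}{\prod_{j=1}^n Z_\eps(t,x_j)}\Big] \leq \E\Big[\big(\E_B[\prod_{j=1}^n M_{\eps,j} F]\big)^2\Big]^{1/2} \cdot \E\Big[\prod_{j=1}^n Z_\eps(t,x_j)^{-2}\Big]^{1/2}.
\]
The second factor is uniformly bounded in $\eps,t,\{x_j\}$ by splitting the product by H\"older and invoking Proposition~\ref{p.negative}. For the first factor, let $(\tilde B^j)_{j=1}^n$ be an independent copy of $(B^j)_{j=1}^n$ and set $\tilde M_{\eps,j}=M_{\eps,j}(t,x_j;\tilde B^j)$. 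Expanding the square and using Fubini (valid by non-negativity and independence of the paths from $V$),
\[
\E\Big[\big(\E_B[\prod_j M_{\eps,j} F]\big)^2\Big] = \E_{B,\tilde B}\Big[F(B) F(\tilde B)\,\E\Big[\prod_j M_{\eps,j}\prod_j \tilde M_{\eps,j}\Big]\Big].
\]

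The inner Gaussian expectation evaluates, by the It\^o isometry, to $\exp(\beta^2\Sigma(B,\tilde B))$, where
\[
\Sigma(B,\tilde B)=\sum_{1\leq j<k\leq n}\Big[\cR\big(\tfrac{t}{\eps^2},\tfrac{x_j-x_k}{\eps},B^j,B^k\big)+\cR\big(\tfrac{t}{\eps^2},\tfrac{x_j-x_k}{\eps},\tilde B^j,\tilde B^k\big)\Big]+\sum_{j,k=1}^n \cR\big(\tfrac{t}{\eps^2},\tfrac{x_j-x_k}{\eps},B^j,\tilde B^k\big);
\]
the key cancellation is that the $2n$ diagonal variance contributions of $\tfrac12\beta^2 R(0)t/\eps^2$ are exactly offset by the martingale normalizations in each $M_{\eps,j}$ and $\tilde M_{\eps,j}$. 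Applying H\"older with exponents $(q,q')$ in $\E_{B,\tilde B}$ and using independence of $B$ from $\tilde B$ under $\Pb_B$,
\[
\E_{B,\tilde B}\big[F(B) F(\tilde B)\, e^{\beta^2\Sigma}\big]\leq \E_B[F^q]^{2/q}\cdot \E_{B,\tilde B}\big[e^{q'\beta^2\Sigma}\big]^{1/q'}.
\]

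It remains to bound $\E_{B,\tilde B}[e^{q'\beta^2\Sigma}]$ uniformly in $\eps,t,\{x_j\}$. Since $\Sigma$ is a sum of $\binom{2n}{2}$ non-negative intersection-time terms, one further H\"older step over those pairs reduces the task to controlling, for each pair, $\sup_{T\geq 0,\,y\in\R^d}\E\big[\exp\big(\lambda\int_0^T R(y+B'_s)ds\big)\big]$ for a single Brownian motion $B'$ arising as a difference of two of the $B^j,\tilde B^k$ (hence with diffusivity $2$), the constant $\lambda$ being proportional to $q'\beta^2$. In $d\geq 3$ this follows from Khasminskii's lemma: since $R$ is bounded and compactly supported, the convolution $R\ast G$ with the Green's function $G$ of $\tfrac12\Delta$ is a bounded function, and the lemma gives the claimed exponential integrability whenever $\lambda<\|R\ast G\|_\infty^{-1}$. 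Combining all the steps then yields the lemma, and this last Khasminskii estimate is the main technical point: it is precisely where the dimensional restriction $d\geq 3$ is used (recurrence of $B'$ would make $\int_0^\infty R(y+B'_s)ds$ infinite), and it is also what imposes, in addition to Proposition~\ref{p.negative}, the smallness of $\beta$ pinning down the threshold $\beta(n,q)$.
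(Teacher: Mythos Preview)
Your proposal is correct and follows essentially the same route as the paper: Cauchy--Schwarz on the outer expectation to split off the denominator (handled by Proposition~\ref{p.negative}), the replica trick to square the numerator, explicit evaluation of the Gaussian expectation as an intersection-time exponential, and a final H\"older step together with Khasminskii's lemma (which the paper packages as Lemma~\ref{l.exmm}). You spell out the H\"older reduction over the $\binom{2n}{2}$ pairs, which the paper leaves implicit in its appeal to Lemma~\ref{l.exmm}.
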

\begin{proof}
By the
Cauchy-Schwarz inequality and Proposition~\ref{p.negative}, the square of the  l.h.s. of \eqref{e.keyes} is bounded by 
\[
\begin{aligned}
&\E\Big[\Big|\E_B\big[ \prod_{j=1}^n M_{\eps,j}(t,x_j) F(B^1,\ldots,B^n)\big]\Big|^2\Big]
=\E_B\E\Big[ \prod_{j=1}^{2n} M_{\eps,j}(t,x_j) F(B^1,\ldots,B^n) F(B^{n+1},\ldots,B^{2n})\Big],
\end{aligned}
\]
where $x_{j+n}=x_j$ for $j=1,\ldots,n$. Evaluating the expectation with respect to $\dot{W}$, we obtain 
\[
\E\Big[\prod_{j=1}^{2n} M_{\eps,j}(t,x_j)\Big]= \exp\Big(\frac{\beta^2}{2}
\sum_{j,k=1}^n\1_{j\neq k} \cR(\frac{t}{\eps^2},\frac{x_j-x_k}{\eps},B^j,B^k)\Big).
\]
Taking 
$p={q}/{(q-1)}$, Lemma~\ref{l.exmm} shows that the r.h.s. of the above expression has an $L^p$ norm that is bounded uniformly in $\eps,t$ and $x_j$, provided that $\beta$ is chosen small. We apply H\"older's
inequality to complete the proof.
\end{proof}

\subsection{The analysis of $I_{1,\eps}$}
Recall that the integral $I_{1,\eps}$ is given by \eqref{e.defI1}.
\begin{lemma}\label{l.i1}
If $K=\eps^{-\alpha}$ with $\alpha<{4}/{(2+d)}$, we have 
\[
\eps^{-(d-2)}\E[I_{1,\eps}^2]\to 0,~~\hbox{ as $\eps\to0$}.
\] 
\end{lemma}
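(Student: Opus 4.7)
The plan is to apply It\^o's isometry to $I_{1,\eps}$, use Lemma~\ref{l.holder} to reduce the resulting second moment to a Brownian weighted intersection integral, and then bound that integral via Minkowski and an explicit $L^q$ heat-kernel calculation.

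Since the integrand of $I_{1,\eps}$ is $\F_s$-adapted, It\^o's isometry gives
\[
\E[I_{1,\eps}^2] = \int_0^K\!\!\int_{\R^d} \E[F_s(y)^2]\,dy\,ds, \qquad F_s(y) = \int g(x)\E\!\left[\frac{\E_B[M_\eps(t,x)\Phi^\eps_{t,x,B}(s,y)]}{Z_\eps(t,x)}\,\Big|\,\F_s\right]\!dx.
\]
I would expand the square, use the identity $\E[\E[A|\F_s]\E[B|\F_s]]=\E[AB]$ (obtained by pulling one $\F_s$-measurable factor inside the other conditional expectation), and introduce two independent Brownian copies $B^1,B^2$ so that the product of $\E_B[\cdots]$'s becomes a single $\E_{B^1,B^2}[\cdots]$. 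The inner $y$-integral then collapses via $\int\varphi(a-y)\varphi(b-y)\,dy=R(a-b)$ to the factor $R(\tfrac{x_1-x_2}{\eps}+B^1_s-B^2_s)$ (the indicator $\1_{s\leq t/\eps^2}$ is $1$ on $[0,K]$ for $\eps$ small, since $\alpha<2$). Applying Lemma~\ref{l.holder} with $n=2$ and $F(B^1,B^2)=\int_0^K R(\tfrac{x_1-x_2}{\eps}+B^1_s-B^2_s)\,ds$, for some $q>1$ and $\beta$ small accordingly, followed by the change of variables $x_1-x_2=\eps z$ (extracting a factor $\eps^d$), yields
\[
\E[I_{1,\eps}^2] \lesssim \eps^d\int_{\R^d}\Big\|\int_0^K R(z+B^1_s-B^2_s)\,ds\Big\|_{L^q(\Pb_B)}dz.
\]

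It remains to bound the right-hand side. I would apply Minkowski's integral inequality to pull the $L^q$ norm inside the $s$-integral, reducing to $\int_{\R^d}\|R(z+B^1_s-B^2_s)\|_{L^q(\Pb_B)}\,dz$. Using the compact support of $R$, this $L^q$-norm is controlled by $\|R\|_\infty\,\Pb(|z+B^1_s-B^2_s|\leq 1)^{1/q}$; splitting the bounded region $|z|\lesssim 1$ (contribution $O(1)$) from the Gaussian-tail region (where $\Pb(\cdots)\lesssim s^{-d/2}e^{-c|z|^2/s}$, whose $1/q$-power integrates in $z$ to $\lesssim s^{d(q-1)/(2q)}$) gives
\[
\int_{\R^d}\|R(z+B^1_s-B^2_s)\|_{L^q(\Pb_B)}\,dz \lesssim (1+s)^{d(q-1)/(2q)},
\]
and integration in $s\in[0,K]$ produces $\lesssim K^{1+d(q-1)/(2q)}$. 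Combining, $\eps^{-(d-2)}\E[I_{1,\eps}^2]\lesssim \eps^{2-\alpha(1+d(q-1)/(2q))}$. Since $1+d(q-1)/(2q)\to (d+2)/2$ as $q\to\infty$ and $\alpha(d+2)/2<2$ by hypothesis, taking $q$ sufficiently large (and $\beta$ smaller accordingly) makes the exponent of $\eps$ strictly positive, closing the argument.

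The main obstacle is the tension between Lemma~\ref{l.holder}, which requires $q>1$ strict to absorb the ratio $M_\eps/Z_\eps$ via the negative-moment bound of Proposition~\ref{p.negative}, and the desire for a sharp $L^1$-type estimate on the intersection integral: Minkowski in $L^q$ yields an $s^{d(q-1)/(2q)}$ growth rather than the pointwise $s^{d/2}$ one would get from brute force. The saving grace is that $d(q-1)/(2q)$ is continuous in $q$ and approaches $d/2$ as $q\to\infty$, so the threshold $\alpha<4/(d+2)$ in the lemma is exactly the range in which this H\"older trade-off can be closed.
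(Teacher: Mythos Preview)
Your overall strategy matches the paper's: It\^o isometry, reduce to a weighted intersection time via Lemma~\ref{l.holder}, then estimate. There is one genuine slip and one point of divergence worth noting.

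The slip: the ``identity'' $\E[\E[A|\F_s]\E[B|\F_s]]=\E[AB]$ is false. Pulling one $\F_s$-measurable factor inside the other conditional expectation gives only $\E[\E[A|\F_s]\,B]$, not $\E[AB]$. The correct move (and what the paper does) is to apply Jensen's inequality $\E[(\E[Y|\F_s])^2]\le \E[Y^2]$ to the full integrand $Y=\Y_{s,y}$ \emph{before} expanding the square; the expansion then yields exactly the $\E_B$-expression you want, with $\le$ in place of $=$. This is a two-line fix and everything downstream is unaffected.

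The divergence: once at the $\cR$-stage, the paper estimates $\E_B[\cR(K,\cdot)^2]^{1/2}$ by the cruder bound $\cR\le CK\cdot\1\{\max_{s\le K}|B^1_s-B^2_s|\ge |x_1-x_2|/(C\eps)\}$ and then invokes the Gaussian tail of the running maximum, arriving directly at $\E[I_{1,\eps}^2]\lesssim K^{1+d/2}\eps^d$ with $q=2$ in Lemma~\ref{l.holder}. Your Minkowski-plus-heat-kernel route is also valid and reaches the same exponent in the limit $q\to\infty$. However, your closing paragraph has the $q$-dependence backwards: the bound $K^{1+d(q-1)/(2q)}$ \emph{improves} as $q\downarrow 1$ (toward $K$), not as $q\to\infty$, so there is no genuine tension---any fixed $q>1$ already closes the argument under $\alpha<4/(d+2)$, and taking $q$ near $1$ would in fact allow larger $\alpha$. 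The threshold $4/(d+2)$ in the lemma reflects the paper's crude $\cR\le CK$ estimate, not an intrinsic obstruction in your approach.
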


\begin{proof}
Writing
\[
I_{1,\eps}
=\int_0^K\int_{\R^d} \E[ \Y_{s,y}|\F_s] dW(s,y),
\]
for the  appropriate
$\Y_{s,y}$ as in \eqref{e.defI1}, we have by
It\^o's isometry that
\[
\begin{aligned}
\E[I_{1,\eps}^2]&=\int_0^K\int_{\R^d} \E[ |\E[ \Y_{s,y}|\F_s] |^2]dyds \leq \int_0^K\int_{\R^d} \E[\Y_{s,y}^2] dyds
\\
&=\beta^2\int_0^K\int_{\R^d} \int_{\R^{2d}} g(x_1)g(x_2) \E\Big[ \frac{\E_B[\prod_{j=1}^2 M_{\eps,j}(t,x_j)
\Phi^\eps_{t,x_j,B^j}(s,y)]}{Z_\eps(t,x_1)Z_\eps(t,x_2)} \Big]dx_1dx_2dyds.
\end{aligned}
\]
Using the fact that 
\[
\int_0^K\int_{\R^d} \prod_{j=1}^2\Phi^\eps_{t,x_j,B^j}(s,y)dyds=\int_0^K  R(\frac{x_1-x_2}{\eps}+B^1_s-B^2_s)ds=
\cR(K,\frac{x_1-x_2}{\eps},B^1,B^2),
\]
where we recall that 
$R$ is the spatial covariance function defined in \eqref{e.defR}, we have 
\[
\E[I_{1,\eps}^2] \les \int_{\R^{2d}} g(x_1)g(x_2) \E\Big[ \frac{\E_B[\prod_{j=1}^2 M_{\eps,j}(t,x_j)
\cR(K,\frac{x_1-x_2}{\eps},B^1,B^2)]}{Z_\eps(t,x_1)Z_\eps(t,x_2)} \Big]dx_1dx_2.
\]
By Lemma~\ref{l.holder}, we have
\[
\E[I_{1,\eps}^2]\les \int_{\R^{2d}} g(x_1)g(x_2) \sqrt{\E_B[\cR(K,\tfrac{x_1-x_2}{\eps},B^1,B^2)^2]}dx_1dx_2.
\]
By the expression of $\cR$ in \eqref{e.defcR}, it suffices to use the estimate
\[
\begin{aligned}
\E_B[\cR(K,\tfrac{x_1-x_2}{\eps},B^1,B^2)^2] \les& K^2 \Pb_B\left[ \max_{s\in[0,K]}|B^1_s-B^2_s| \geq \tfrac{|x_1-x_2|}{C\eps}\right]
\les K^2 e^{-\frac{|x_1-x_2|^2}{C\eps^2K}}
\end{aligned}
\]
for some constant $C>0$. This implies 
\[
\E[I_{1,\eps}^2] \les K \int_{\R^{2d}}g(x_1)g(x_2)  e^{-\frac{|x_1-x_2|^2}{C\eps^2K}} dx_1dx_2 \les K^{1+\frac{d}{2}} \eps^d.
\]
The proof is complete.
\end{proof}

\subsection{The analysis of $I_{2,\eps}$}

Recall the definition of $I_{2,\eps}$, see \eqref{e.defI2}, and set $K=\eps^{-\alpha}$.
We have the following lemma.
\begin{lemma}\label{l.i2}
For any $\alpha>0$, there exists $\beta(\alpha)$ such that if $\beta<\beta(\alpha)$, 
\[
\eps^{-(d-2)} \E[I_{2,\eps}^2]\to0,~~\hbox{ as $\eps\to0$.}
\]
\end{lemma}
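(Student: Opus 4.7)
Following the strategy of Lemma~\ref{l.i1}, I would first apply It\^o's isometry together with the conditional Jensen inequality to bound
\[
\E[I_{2,\eps}^2]\les \beta^2\int_{\R^{2d}} g(x_1)g(x_2)\,J_\eps(x_1,x_2)\,dx_1 dx_2,
\]
where, after integrating the $(s,y)$ variables out via $\int\Phi^\eps_{t,x_1,B^1}(s,y)\Phi^\eps_{t,x_2,B^2}(s,y)\,dyds=\cR([K,t/\eps^2],\tfrac{x_1-x_2}{\eps},B^1,B^2)$,
\[
J_\eps(x_1,x_2)=\E\left[\frac{\E_B\!\left[\prod_{j=1}^2 M_{\eps,j}(t,x_j)\,\cR\big([K,\tfrac{t}{\eps^2}],\tfrac{x_1-x_2}{\eps},B^1,B^2\big)\right]}{Z(K,x_1/\eps)\,Z(K,x_2/\eps)}\prod_{j=1}^2\!\left(\tfrac{Z(K,x_j/\eps)}{Z(t/\eps^2,x_j/\eps)}-1\right)\right].
\]

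The plan is to apply H\"older's inequality in the outer expectation to separate the ``small factor'' $\prod_j(Z(K,x_j/\eps)/Z(t/\eps^2,x_j/\eps)-1)$ from the rest. For $\beta$ small, the martingale $T\mapsto Z(T,x)$ is uniformly $L^r$-bounded in $T$ and $x$ and converges in $L^r$ to $Z_\infty(x)$ for some $r>1$ (by \eqref{e.zinfinity} together with uniform integrability). Combined with the negative moment bound of Proposition~\ref{p.negative}, this will yield
\[
\sup_x\Big\|\tfrac{Z(K,x/\eps)}{Z(t/\eps^2,x/\eps)}-1\Big\|_{L^{2q}}\to 0\qquad\text{as $K=\eps^{-\alpha}\to\infty$}
\]
for any $q$ sufficiently close to $1$. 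For the complementary $L^p$ factor I would prove a direct variant of Lemma~\ref{l.holder} in which $Z_\eps(t,x_j)$ is replaced by $Z(K,x_j/\eps)$ in the denominator; the proof is identical, since Proposition~\ref{p.negative} is uniform in the time parameter. This variant gives, for $\beta$ small enough that Lemma~\ref{l.exmm}'s exponential moment bound closes at exponent $p$,
\[
\Big\|\tfrac{\E_B[\prod_j M_{\eps,j}(t,x_j)\,\cR]}{Z(K,x_1/\eps)Z(K,x_2/\eps)}\Big\|_{L^p}\les \E_B\!\left[\cR\big([K,\tfrac{t}{\eps^2}],\tfrac{x_1-x_2}{\eps},B^1,B^2\big)^{r'}\right]^{1/r'}
\]
for some $r'$ depending on $p$.

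Combining these bounds via H\"older's inequality and integrating against $g(x_1)g(x_2)$, the same heat-kernel computation used to identify $\sigma_t^2$ in \eqref{e.defsigma} and in the analysis of $I_{3,\eps}$ (based on $\E_B[R(y+B^1_s-B^2_s)]\les G_{2s}(y)$ followed by the change of variables $y=(x_1-x_2)/\eps$) shows that the integral of $\E_B[\cR^{r'}]^{1/r'}$ against $g(x_1)g(x_2)$ is $O(\eps^{d-2})$, so that
\[
\E[I_{2,\eps}^2]\les \eps^{d-2}\cdot o_{K\to\infty}(1)=o(\eps^{d-2}),
\]
which proves the claim. The main obstacle is the simultaneous closure of the two H\"older constraints: $\beta(\alpha)$ must be small enough both for Lemma~\ref{l.exmm}'s exponential intersection-moment bound at exponent $p$ (needed for the main factor) and for the martingale ratio to converge in $L^{2q}$ at the conjugate exponent (needed for the small factor); matching these two requirements is what determines the admissible range of $\beta$.
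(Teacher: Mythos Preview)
Your overall strategy matches the paper's, but there is a genuine gap in the final step. You claim that
\[
\int_{\R^{2d}} |g(x_1)g(x_2)|\,\E_B\Big[\cR\big([K,\tfrac{t}{\eps^2}],\tfrac{x_1-x_2}{\eps},B^1,B^2\big)^{r'}\Big]^{1/r'}dx_1dx_2 = O(\eps^{d-2}),
\]
citing the heat--kernel computation behind $\sigma_t^2$. That computation, however, is a \emph{first--moment} calculation: it gives the $O(\eps^{d-2})$ bound only for $r'=1$. For $r'>1$, Lemma~\ref{l.qmm} yields $\E_B[\cR^{r'}]^{1/r'}\les \eps^{d/r'-2}|x_1-x_2|^{-(d-2)/r'}$, so after integration against $g(x_1)g(x_2)$ you only get $O(\eps^{d/r'-2})$, which is weaker than $\eps^{d-2}$ by the factor $\eps^{-d(1-1/r')}$. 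Since any H\"older split that separates the ``small factor'' forces $r'>1$, this loss of a small positive power of $\eps$ is unavoidable in your scheme.

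Consequently, the qualitative statement $\sup_x\|Z(K,x/\eps)/Z(t/\eps^2,x/\eps)-1\|_{L^{2q}}=o_{K\to\infty}(1)$ is not enough: you need this factor to decay like a \emph{positive power of $\eps$} to beat $\eps^{-d(1-1/r')}$. This is exactly where the paper invokes the quantitative estimate $\E[|Z(K,0)-Z(t/\eps^2,0)|^2]\les K^{-(d-2)/2}=\eps^{\alpha(d-2)/2}$ from \cite[Proposition~2.1]{CometsMukh} (see \eqref{e.se9192}); combined with negative moments it gives a contribution $\eps^{\alpha(d-2)/32}$, and one then chooses $p$ large (hence $r'$ close to $1$) so that the gain $\alpha(d-2)/32$ dominates the H\"older loss. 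Without this rate your product $\eps^{d/r'-2}\cdot o(1)$ need not be $o(\eps^{d-2})$, and the proof does not close.
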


\begin{proof}
By the same calculation as in the proof of Lemma~\ref{l.i1}, we have
\[
\E[I_{2,\eps}^2] \les \int_{\R^{2d}} g(x_1)g(x_2)A_\eps(x_1,x_2)dx_1dx_2,
\]
with 
\[
A_\eps(x_1,x_2)=\E\Big[\frac{\E_B[\prod_{j=1}^2 M_{\eps,j}(t,x_j)\cR([K,t/\eps^2],
{(x_1-x_2)}/{\eps},B^1,B^2)] }{Z(K,x_1/\eps)Z(K,x_2/\eps)} 
\prod_{j=1}^2\Big(\frac{Z(K,x_j/\eps)}{Z(t/\eps^2,x_j/\eps)}-1\Big)\Big].
\]
Recall that $\cR([K,t/\eps^2],
{(x_1-x_2)}/{\eps},B^1,B^2)$ was defined in \eqref{e.defcR} and measures the intersection time of $B^1,B^2$ during $[K,t/\eps^2]$.
Applying Proposition~\ref{p.negative}, H\"older's inequality,
and the fact that $Z(t,x)$ is stationary in $x$, we have 
\begin{equation}\label{e.se9191}
\begin{aligned}
A_\eps(x_1,x_2)\les&  
\Big(\E[|\E_B[\prod_{j=1}^2M_{\eps,j}(t,x_j)\cR([K,t/\eps^2],{(x_1-x_2)}/{\eps},B^1,B^2)]|^2]\Big)^{1/2}
\\
&\times 
\left(\E[|Z(K,0)-Z(t/\eps^2,0)|^{16}]\right)^{1/8}.
\end{aligned}
\end{equation}
For the second factor on the r.h.s. of \eqref{e.se9191}, we have
\begin{equation}\label{e.se9192}
\begin{aligned}
\E[|Z(K,0)-Z(t/\eps^2,0)|^{16}] \les& \sqrt{\E[|Z(K,0)-Z(t/\eps^2,0)|^2]\sup_{t>0} \E[|Z(t,0)|^{30}]}\\
\les & K^{-(d-2)/4}=\eps^{\alpha(d-2)/4},
\end{aligned}
\end{equation}
where the second ``$\les$'' comes from e.g. \cite[Proposition 2.1]{CometsMukh}. 
For first factor on the r.h.s. of \eqref{e.se9191}, the same calculation as in the proof of Lemma~\ref{l.i1} yields
\begin{equation}\label{e.se9193}
\begin{aligned}
&\E[|\E_B[\prod_{j=1}^2 M_{\eps,j}(t,x_j)\cR([K,t/\eps^2],{(x_1-x_2)}/{\eps},B^1,B^2)]|^2]\\
&\les\E_B[|\cR([K,t/\eps^2],{(x_1-x_2)}/{\eps},B^1,B^2)|^q]^{2/q}\les 
\frac{1}{|x_1-x_2|^{2(d-2)/q}}\eps^{\frac{2(d-2)}{q}-\frac{4}{p}},
\end{aligned}
\end{equation}
where the last step comes from Lemma~\ref{l.qmm}
below. Combining \eqref{e.se9192} and \eqref{e.se9193}, we have 
\[
A_\eps(x_1,x_2) \les \frac{\eps^{\frac{(d-2)}{q}-\frac{2}{p}}}{|x_1-x_2|^{(d-2)/q}}\times \eps^{\frac{\alpha(d-2)}{32}},
\]
which implies 
\[
\E[I_{2,\eps}^2] \les \eps^{\frac{(d-2)}{q}-\frac{2}{p}+\frac{\alpha(d-2)}{32}}.
\]
We choose $p$ large enough (for fixed 
$\alpha$)
such that 
\[
\frac{(d-2)}{q}-\frac{2}{p}+\frac{\alpha(d-2)}{32}
=\frac{d}{q}-2+\frac{\alpha(d-2)}{32}>d-2,
\]
to complete the proof.
\end{proof}

\begin{lemma}\label{l.qmm}
For any $q>1$, we have
\[
\E_B[|\cR(t/\eps^2,x/\eps,B^1,B^2)|^q]\les \frac{\eps^{d-2-\frac{2q}{p}}}{|x|^{d-2}}.
\]
\end{lemma}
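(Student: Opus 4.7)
The plan is to reduce the $q$-th moment estimate to a first-moment estimate by exploiting the trivial pathwise bound that follows from the boundedness of $R$. Since $R\ge 0$ is bounded, the pathwise inequality
\[
\cR(t/\eps^2,x/\eps,B^1,B^2) = \int_0^{t/\eps^2} R\bigl(x/\eps + B^1_s - B^2_s\bigr)\,ds \le \|R\|_\infty \cdot t\eps^{-2}
\]
holds. Writing $\cR^q = \cR^{q-1}\cdot \cR$ and pulling the pathwise bound on $\cR^{q-1}$ out of the expectation gives
\[
\E_B\bigl[|\cR(t/\eps^2,x/\eps,B^1,B^2)|^q\bigr] \les \eps^{-2(q-1)} \,\E_B\bigl[\cR(t/\eps^2,x/\eps,B^1,B^2)\bigr].
\]
Since $1/p+1/q=1$, we have $q-1 = q/p$, so the prefactor is exactly $\eps^{-2q/p}$. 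This matches the power of $\eps$ claimed in the lemma and reduces the problem to proving the first-moment bound
\[
\E_B\bigl[\cR(t/\eps^2,x/\eps,B^1,B^2)\bigr] \les \frac{\eps^{d-2}}{|x|^{d-2}}.
\]

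For the first moment, I would swap expectation and integration. Since $B^1_s - B^2_s$ has density $G_{2s}$, enlarging the time interval from $[0,t/\eps^2]$ to $[0,\infty)$ and using Fubini gives
\[
\E_B\bigl[\cR(t/\eps^2,x/\eps,B^1,B^2)\bigr] \le \int_{\R^d} R(z) \int_0^\infty G_{2s}(z - x/\eps)\,ds\,dz.
\]
In dimension $d\ge 3$ the inner time integral is the Newtonian Green's function and equals $c_d |z - x/\eps|^{-(d-2)}$. Because $R$ is supported in the unit ball, for $|x|\ge 2\eps$ we have $|z-x/\eps|\ge |x|/(2\eps)$ on the support of $R$, so the double integral is at most $c\,\|R\|_1 (|x|/\eps)^{-(d-2)} = c\,\eps^{d-2}/|x|^{d-2}$. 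For $|x|<2\eps$ the claim is vacuous, since then $\eps^{d-2}/|x|^{d-2}\gtrsim 1$ while the first moment is bounded by a constant (the Newtonian potential of $R$ against a bounded argument).

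Combining these two steps yields the bound. There is no serious obstacle: the main point is the observation that one can peel off an $L^\infty$ factor of $\cR^{q-1}$ at the modest cost of $\eps^{-2(q-1)}$, so that the only genuine computation is the standard estimate of the expected weighted intersection time of two independent Brownian motions in the transient regime $d\ge 3$, whose decay in $|x|$ comes directly from the Green's function of $-\Delta/2$.
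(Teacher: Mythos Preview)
Your proof is correct and follows essentially the same approach as the paper: both arguments peel off $q-1$ factors of $\cR$ using the trivial pathwise bound $\cR\les t\eps^{-2}$ (the paper first replaces $R$ by $\1_{|\cdot|\le 1}$, which is cosmetic), identify $\eps^{-2(q-1)}=\eps^{-2q/p}$, and then bound the remaining first moment by extending the time integral to infinity and invoking the Newtonian Green's function $\int_0^\infty G_{2s}(y)\,ds \asymp |y|^{-(d-2)}$. Your explicit treatment of the regime $|x|\les\eps$ is a nice touch that the paper leaves implicit.
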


\begin{proof}
Since $R(x)=0$ for $|x|\geq1$, we have 
\[
\begin{aligned}
|\cR(t/\eps^2,x/\eps,B^1,B^2)|^q \les& 
\Big(\int_0^{t/\eps^2} \1_{|x/\eps+B^1_s-B^2_s|\leq 1} ds\Big)^q
\les \eps^{-2q/p}\int_0^{t/\eps^2} \1_{|x/\eps+B^1_s-B^2_s|\leq 1} ds.
\end{aligned}
\]
Taking the expectation, we obtain 
\[
\begin{aligned}
\E[|\cR(t/\eps^2,x/\eps,B^1,B^2)|^q ] \les& \eps^{-2q/p}\int_0^\infty \E_B[ \1_{|x/\eps+B^1_s-B^2_s|\leq 1} ]ds
\les  \eps^{-2q/p}\int_{\R^d}  \frac{\1_{|x/\eps+y|\leq 1}}{|y|^{d-2}}dy \les \frac{\eps^{d-2-\frac{2q}{p}}}{|x|^{d-2}},
\end{aligned}
\]
which completes the proof.
\end{proof}

\subsection{The analysis of $I_{3,\eps}$}\label{s.analysisi3}
Recall the definition of $I_{3,\eps}$, see \eqref{e.defI3}.
Using the fact that $\E[M(t/\eps^2,x/\eps)|\F_s]=M(s,x/\eps)$, we get
\[
I_{3,\eps}=\int_K^{t/\eps^2}\int_{\R^d} \left(\int_{\R^d} \frac{g(x)}{Z(K,x/\eps)}\E_B[ M(s,x/\eps)\Phi^\eps_{t,x,B}(s,y)] dx\right) dW(s,y).
\]
For any $T>0,x_1,x_2\in\R^d$ and a standard $d$-dimensional 
Brownian motion $\bar{B}$, we define 
\[
\cH(T,x_1,x_2)=\E_B\left[ e^{\beta^2 \int_0^T R(x_1+\bar{B}_{2s})ds} \big|\bar{B}_{2T}=x_2\right].
\]
We introduce the following notation: for any $x,y\in\R^d$, the expectation $\hat{\E}_{x,y}$ is defined as 
\begin{equation}\label{e.defhatE}
\hat{\E}_{x,y}[F]=\E\left[\frac{\E_B[ M_1(K,x)M_2(K,y) F]}{Z(K,x)Z(K,y)}\right]
\end{equation}
for any random variable $F$, where we recall that $M_j$ is the $M$ associated with $B^j$. In particular, we will consider functionals of 
\[
\X_K=B^1_K-B^2_K,
\]
so
\[
\hat{\E}_{x,y}[F(\X_K)]=\E\left[\frac{\E_B[ M_1(K,x)M_2(K,y) F(B^1_K-B^2_K)]}{Z(K,x)Z(K,y)}\right].
\]
The following three lemmas combine to show the convergence of 
\begin{equation}\label{e.coni3}
\eps^{-(d-2)}\E[I_{3,\eps}^2]\to \beta^{-2}\sigma_t^2=\nu_{\mathrm{eff}}^2 \int_0^t \int_{\R^{2d}} g(x_1)g(x_2) G_{2s}(x_1-x_2)dx_1dx_2ds.
\end{equation} 

\begin{lemma}\label{l.i3re}
With $K=\eps^{-\alpha}$ and $\alpha<2$, we have
\[
\eps^{-(d-2)}\E[I_{3,\eps}^2]= \int_{0}^{t-\eps^2K} \G_\eps(s) ds,
\]
with 
\begin{equation}
\begin{aligned}
\G_\eps(s)=\int_{\R^{3d}}& g(x-w)g(x) R(y)  
\hat{\E}_{-w/\eps,0}\left[G_{2s}(w+\eps y-\eps \X_K)\cH(\frac{s}{\eps^2},y,\X_K-\frac{w}{\eps}-y)\right]dxdydw.
\end{aligned}
\end{equation}
\end{lemma}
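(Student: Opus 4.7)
The plan is to compute $\E[I_{3,\eps}^2]$ via It\^o's isometry and then simplify the resulting multiple integral by successive conditioning. Since the integrand of $I_{3,\eps}$ is $\F_s$-measurable (recall that for $s\ge K$, $Z(K,x/\eps)$ is $\F_s$-measurable and the inner $\E_B[M(s,x/\eps)\Phi^\eps_{t,x,B}(s,y)]$ is $\F_s$-measurable after the $B$-expectation), isometry gives
\[
\E[I_{3,\eps}^2]=\int_K^{t/\eps^2}\!\!\int_{\R^d}\E\Big[\Big|\int_{\R^d}\tfrac{g(x)}{Z(K,x/\eps)}\E_B[M(s,x/\eps)\Phi^\eps_{t,x,B}(s,y)]\,dx\Big|^2\Big]dy\,ds.
\]
Expanding the square introduces two independent copies $B^1,B^2$ of the Brownian motion, and integrating $\Phi^\eps_{t,x_1,B^1}(s,y)\Phi^\eps_{t,x_2,B^2}(s,y)$ in $y$ produces the factor $R((x_1-x_2)/\eps+B^1_s-B^2_s)$, reminiscent of the computation in Lemma \ref{l.i1}.

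Next, I would separate time scales via the multiplicative decomposition $M_j(s,x_j/\eps)=M_j(K,x_j/\eps)\cdot\widetilde M_j$, where $\widetilde M_j$ depends only on the noise on $[K,s]$. Conditioning on $\F_K$ and on both Brownian paths, $\widetilde M_1\widetilde M_2$ is the exponential of a Gaussian with computable mean and covariance, and a direct calculation yields
\[
\E\big[\widetilde M_1\widetilde M_2\,\big|\,B,\F_K\big]=\exp\!\Big(\beta^2\,\cR([K,s],(x_1-x_2)/\eps,B^1,B^2)\Big).
\]
The leftover factor $M_1(K,x_1/\eps)M_2(K,x_2/\eps)/(Z(K,x_1/\eps)Z(K,x_2/\eps))$ is precisely the Girsanov-type tilt defining $\hat\E_{x_1/\eps,x_2/\eps}[\cdot]$.

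The third step uses the Markov property for $B^1,B^2$ at time $K$: writing $B^j_u=B^j_K+\widetilde B^j_{u-K}$ for $u\ge K$, the increments $\widetilde B^j$ are independent of $B^j|_{[0,K]}$ (and of the tilt), and they enter the remaining expressions only through their difference $\bar B_{2r}\stackrel{d}{=}\widetilde B^1_r-\widetilde B^2_r$. Disintegrating against the Gaussian density of $\bar B_{2(s-K)}$ at the point where it takes value $y-(x_1-x_2)/\eps-\X_K$ yields a factor $G_{2(s-K)}(y-(x_1-x_2)/\eps-\X_K)$ together with the conditional expectation $\cH(s-K,\,(x_1-x_2)/\eps+\X_K,\,y-(x_1-x_2)/\eps-\X_K)$, where $y$ is now the variable from the $R$-factor.

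The final step is a change of variables and a symmetry identification. Parameterizing $x_2=x$, $x_1=x-w$ (so $(x_1-x_2)/\eps=-w/\eps$) and using the spatial stationarity of the noise, one replaces $\hat\E_{(x-w)/\eps,x/\eps}$ by $\hat\E_{-w/\eps,0}$. Rescaling $s'=\eps^2(s-K)$ gives a Jacobian $\eps^{-2}$, and the scaling $G_{2s'/\eps^2}(a)=\eps^d G_{2s'}(\eps a)$ together with the symmetry of $G_{2s'}$ produces the overall prefactor $\eps^{d-2}$ and the argument $w+\eps y-\eps\X_K$ claimed in the statement. The first slot of $\cH$ is brought from $-w/\eps+\X_K$ to $y$ using the identity
\[
\cH(T,x_1,x_2)=\cH(T,x_1+x_2,-x_2),
\]
which follows from the time-reversal symmetry of the Brownian bridge on $[0,2T]$ combined with $R(-\cdot)=R(\cdot)$. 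The main obstacle is not a single deep estimate but rather this bookkeeping: aligning the spatial shift (so that $\hat\E_{-w/\eps,0}$ appears), the Brownian-bridge reparameterization of $\cH$, and the scaling of $G$ simultaneously, while being careful about signs of $w/\eps$ and $\X_K$ throughout.
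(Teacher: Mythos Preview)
Your proposal is correct and follows essentially the same route as the paper: It\^o isometry, conditioning on $\F_K$ to produce the exponential of $\cR([K,s],\cdot)$, the Markov decomposition $B^j_r=B^j_K+\widetilde B^j_{r-K}$, disintegration against the endpoint of $\bar B$, spatial stationarity to reduce $\hat\E_{(x-w)/\eps,x/\eps}$ to $\hat\E_{-w/\eps,0}$, and the time rescaling $s'=\eps^2(s-K)$. The only cosmetic difference is that the paper carries out the bridge time-reversal inside a chain of successive substitutions $y\mapsto y-w/\eps$, $w\mapsto w+\eps y$, $y\mapsto y-\X_K$, whereas you package the same step as the clean identity $\cH(T,x_1,x_2)=\cH(T,x_1+x_2,-x_2)$; both arrive at the stated $\G_\eps(s)$.
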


\begin{lemma}\label{l.i3bd}
There exists $\beta_0>0$ so that there exists $\gamma\in (0,1)$ such that,  for all  $\beta<\beta_0$,
$\G_\eps(s)\les s^{-\gamma}$ for $s\in(0,t)$.
\end{lemma}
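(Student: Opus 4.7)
The plan is to use a Feynman--Kac identity to reduce the product $G_{2s}\cdot\cH$ inside the expectation to a constant multiple of $G_{2s}$, then to apply Lemma~\ref{l.holder} and finish with an explicit Gaussian computation.

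Setting $z=\X_K-w/\eps-y$, the scaling identity $G_{2s}(-\eps z)=\eps^{-d}G_{2s/\eps^2}(z)$ lets me rewrite the integrand (for fixed $y$) as $\eps^{-d}u(s/\eps^2,z)$, where $u(T,z):=G_{2T}(z)\cH(T,y,z)$. By the Feynman--Kac formula (and accounting for the factor of $2$ in $\bar{B}_{2s}$), $u$ solves the parabolic equation
\[
\partial_T u=\Delta u+\beta^2 R(y+\cdot)u,\qquad u(0,\cdot)=\delta_0.
\]
For $d\geq 3$ and $\beta$ small, the Duhamel expansion converges uniformly in $y$, yielding the pointwise bound $u(T,z)\le C\,G_{2T}(z)$. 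This rests on the transience of $d$-dimensional Brownian motion, which makes the iterated convolutions $\int_0^T\!\int G_{2(T-s)}(z-z')R(y+z')G_{2s}(z')\,dz'\,ds$ bounded by a multiple of $G_{2T}(z)$ via the Green's-function bound for $-\Delta$. Translating back, $\cH\lesssim 1$ uniformly, so
\[
G_{2s}(w+\eps y-\eps\X_K)\,\cH(s/\eps^2,y,\X_K-w/\eps-y)\lesssim G_{2s}(w+\eps y-\eps\X_K).
\]

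With $\cH$ absorbed into a constant, I apply Lemma~\ref{l.holder} with $n=2$ and an exponent $q>1$ chosen close to $1$ (at the cost of taking $\beta$ small):
\[
\hat{\E}_{-w/\eps,0}[G_{2s}\cH]\lesssim \E_B\bigl[G_{2s}(w+\eps y-\eps\X_K)^q\bigr]^{1/q}.
\]
Under $\E_B$, $\X_K=B^1_K-B^2_K\sim N(0,2KI_d)$, so the identity $G_{2s}(u)^q=c_q s^{-d(q-1)/2}G_{2s/q}(u)$ combined with the Gaussian convolution rule $G_a * G_b = G_{a+b}$ yields
\[
\E_B[G_{2s}(w+\eps y-\eps\X_K)^q]=c_q s^{-d(q-1)/2}\,G_{2s/q+2\eps^2 K}(w+\eps y).
\]
Integrating against $g(x-w)g(x)R(y)$ and using the compact support and boundedness of $g$ and $R$, together with $\int G_T(u)^{1/q}du\lesssim T^{d(q-1)/(2q)}$, I obtain
\[
|\G_\eps(s)|\lesssim s^{-\gamma}(s+\eps^2 K)^\gamma,\qquad \gamma:=\frac{d(q-1)}{2q}.
\]
Since $\eps^2 K=\eps^{2-\alpha}$ is bounded for $\alpha<2$ and $s\le t$, the factor $(s+\eps^2 K)^\gamma=O(1)$, so $\G_\eps(s)\lesssim s^{-\gamma}$. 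Choosing $q$ sufficiently close to $1$ makes $\gamma<1$, completing the proof.

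The principal obstacle is the uniform bound $u\le C G_{2T}$: this is where the hypothesis $d\ge 3$ is genuinely used (via transience), and where the smallness of $\beta$ is needed to control the Neumann series for the resolvent of $\Delta+\beta^2 R(y+\cdot)$ uniformly in $y$. All remaining steps are compact-support geometry and explicit Gaussian algebra.
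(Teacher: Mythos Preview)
Your proof is correct and follows essentially the same route as the paper's: bound $\cH$ uniformly, apply Lemma~\ref{l.holder}, compute the Gaussian expectation of $G_{2s}(w+\eps y-\eps\X_K)^q$, integrate in $w$, and choose $q$ close to $1$ so that $\gamma=d(q-1)/(2q)<1$. Your Duhamel/Neumann-series argument for $\cH\lesssim 1$ is just a repackaging of the paper's Lemma~\ref{l.exmm} (proved there via Portenko's lemma for the Brownian bridge), so the two proofs differ only in how that single uniform bound is justified.
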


\begin{lemma}\label{l.i3lim}
For any $s\in(0,t)$, 
\[
\G_\eps(s)\to \nu_{\mathrm{eff}}^2 \int_{\R^{2d}} g(x-w)g(x)G_{2s}(w)dwdx,
\]
 as $\eps\to0$.
\end{lemma}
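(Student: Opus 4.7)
The strategy is to pass $\eps \to 0$ through the three levels of averaging in $\G_\eps(s)$: first the conditional Brownian expectation defining $\cH$, then the polymer expectation $\hat{\E}_{-w/\eps,0}[\cdot]$, and finally the outer $(x,y,w)$ integration. The pointwise limit of the $\hat{\E}$-integrand turns out to be the deterministic quantity $G_{2s}(w)\cH_\infty(y)$, where
\[
\cH_\infty(y) := \E_B\Bigl[\exp\Bigl(\tfrac{\beta^2}{2}\int_0^\infty R(y+B_u)\,du\Bigr)\Bigr].
\]
Since $\hat{\E}_{-w/\eps,0}$ is a probability-measure expectation, it collapses onto this deterministic limit, and the outer integration reproduces $\nu_{\mathrm{eff}}^2 \int g(x-w)g(x)G_{2s}(w)\,dw\,dx$ once we recognize $\int R(y)\cH_\infty(y)\,dy = \nu_{\mathrm{eff}}^2$.

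Two pointwise limits drive the argument. First, with $K=\eps^{-\alpha}$ and $\alpha<2$, $\eps\X_K = O(\eps^{1-\alpha/2})\to 0$ a.s.\ under the Brownian law, so $G_{2s}(w+\eps y-\eps\X_K)\to G_{2s}(w)$. Second, the change of variable $u=2s$ recasts $\cH$ as
\[
\cH(T,y,z) = \E\Bigl[\exp\Bigl(\tfrac{\beta^2}{2}\int_0^{2T} R(y+\bar B_u)\,du\Bigr)\,\Bigm|\,\bar B_{2T}=z\Bigr],
\]
and the explicit representation $\hat B_u = \tilde W_u + (u/2T)(z-\tilde W_{2T})$ of the bridge from $0$ to $z$ shows that, when $T=s/\eps^2$ and $z=\X_K-w/\eps-y$ (so $z/\sqrt{T}\to -w/\sqrt{s}$ a.s.), $\hat B_u\to\tilde W_u$ in probability uniformly on any bounded window $[0,T_0]$. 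Combined with the transience of $\tilde W$ in $d\geq 3$, which makes $\int_0^\infty R(y+\tilde W_u)\,du$ integrable with exponential moments for small $\beta$, a truncation argument discarding the tail $\int_{T_0}^{2T} R(y+\hat B_u)\,du$ then gives $\cH(s/\eps^2,y,\X_K-w/\eps-y)\to \cH_\infty(y)$ a.s.

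With both pointwise limits established, the remaining work is a pair of dominated convergence arguments. Inside $\hat{\E}$, the integrand is uniformly bounded in $\eps$ by $\|G_{2s}\|_\infty$ times an $L^p$-controlled $\cH$-factor (using Lemma~\ref{l.holder} to convert the polymer weighting into a plain Brownian expectation at the cost of a finite H\"older exponent); since the limit is deterministic and $\hat{\E}_{-w/\eps,0}[1]=1$, dominated convergence yields
\[
\hat{\E}_{-w/\eps,0}\bigl[G_{2s}(w+\eps y-\eps\X_K)\cH(s/\eps^2,y,\X_K-w/\eps-y)\bigr]\longrightarrow G_{2s}(w)\cH_\infty(y).
\]
For the outer $(x,y,w)$ integration, the compact supports of $g$ and $R$ restrict the domain to a bounded set, and a uniform-in-$\eps$ upper bound on the integrand (again from Lemma~\ref{l.holder}) permits a second application of dominated convergence, producing the claimed limit after factoring $\int R(y)\cH_\infty(y)\,dy=\nu_{\mathrm{eff}}^2$.

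The main obstacle is the second pointwise limit: while weak convergence of the Brownian bridge to a free Brownian motion on compact time windows is classical, the exponential weight in $\cH$ combined with the need for uniform control as the bridge endpoint recedes to distance $|z|\sim|w|/\eps$ requires a quantitative estimate on the time the bridge spends inside the compact set $\mathrm{supp}\,R(y+\cdot)$ beyond the cut-off $T_0$. This is handled via explicit heat-kernel bounds for the bridge density (which reduce to Gaussian estimates away from the endpoint) together with the compact support of $R$. Once the truncation is quantified, the dominated-convergence steps are routine thanks to Lemma~\ref{l.holder} and the compact supports of $g$ and $R$.
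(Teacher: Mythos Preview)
Your outline coincides with the paper's proof: replace $G_{2s}(w+\eps y-\eps\X_K)$ by $G_{2s}(w)$ via $\eps\X_K\to 0$, truncate the time integral in $\cH$ (the paper takes the growing cutoff $s/\eps^\alpha$ rather than your fixed $T_0$, and compares bridge to Brownian motion through the density ratio rather than the additive bridge representation, but either variant works), recover $\cH_\infty(y)$, and control all remainders with Lemmas~\ref{l.holder} and~\ref{l.exmm}.

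Two places in your write-up need tightening. First, ``dominated convergence inside $\hat{\E}$'' is not quite right as stated, because $\hat{\E}_{-w/\eps,0}$ itself depends on $\eps$ through $K=\eps^{-\alpha}$ and the polymer weights, so there is no fixed measure on which to run DCT. The clean route---which the paper takes and which your own appeal to Lemma~\ref{l.holder} already points to---is to bound $\hat{\E}_{-w/\eps,0}[|F_\eps-c|]\lesssim \E_B[|F_\eps-c|^q]^{1/q}$ and then apply ordinary DCT under the Brownian law. Second, your bridge-tail estimate (``heat-kernel bounds away from the endpoint'') requires the rescaled endpoint $\eps\X_K-w-\eps y$ to stay bounded away from the origin; the paper makes this explicit by splitting on the event $\{|\eps\X_K|\le|w|/2\}$ and showing the complement has vanishing $\hat{\E}$-probability, a step your sketch should spell out.
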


\begin{proof}[Proof of Lemma~\ref{l.i3re}]
By It\^o's isometry, we have 
\[
\E[I_{3,\eps}^2]=\int_K^{t/\eps^2} \int_{\R^{3d}} g(x_1)g(x_2)\E\Big[
\frac{\E_B[ \prod_{j=1}^2 M_j(s,x_j/\eps)\Phi^\eps_{t,x_j,B^j}(s,y)]}{Z(K,x_1/\eps)Z(K,x_2/\eps)}\Big] dydx_1dx_2ds.
\]
Conditioning on $\F_K$, we have
\[
\E\Big[ \prod_{j=1}^2 M_j(s,x_j/\eps)\,\big|\,\F_K\Big]=
\Big(\prod_{j=1}^2 M_j(K,x_j/\eps)\Big)\times 
\exp\left(\beta^2 \int_K^sR(\frac{x_1-x_2}{\eps}+B^1_r-B^2_r)dr\right).
\]
Integrating in $y$, we have 
\begin{equation}\label{e.var1}
\begin{aligned}
\E[I_{3,\eps}^2]= &\int_{K}^{t/\eps^2}\int_{\R^{2d}} g(x_1)g(x_2)\\
&
\!\!\!\!\!
\!\!\!\!\!
\!\!\!\!\!
\!\!\!\!\!
\!\!\!\!\!
\!\!\!\!\!
\!\!\!\!\!
\times  \E\Big[ \frac{\E_B[ \prod_{j=1}^2 M_j(K,x_j/\eps) 
\exp\{\beta^2 \int_K^sR(\frac{x_1-x_2}{\eps}+B^1_r-B^2_r)dr\} R(\frac{x_1-x_2}{\eps}+B^1_s-B^2_s)]}{Z(K,x_1/\eps)Z(K,x_2/\eps)}\Big]dx_1dx_2ds.
\end{aligned}
\end{equation}
Changing variables $x_2\mapsto x, x_1\mapsto x+\eps y$ and $s\mapsto \tfrac{s}{\eps^2}$, and using the stationarity of $V$ in $x$, we have 
\[
\begin{aligned}
\eps^{-(d-2)}\E[I_{3,\eps}^2]=& \int_{\eps^2K}^t  \int_{\R^{2d}} g(x+\eps y)g(x)\\
&
\!\!\!\!\!
\!\!\!\!\!
\!\!\!\!\!
\!\!\!\!\!
\!\!\!\!\!
\!\!\!\!\!
\!\!\!\!\!
\times \E\Big[ \frac{\E_B[M_1(K,y)M_2(K,0)\exp\{\beta^2 
\int_K^{s/\eps^2}R(y+B^1_r-B^2_r)dr\} R(y+B^1_{s/\eps^2}-B^2_{s/\eps^2})]}{Z(K,y)Z(K,0)}\Big]dxdyds.
\end{aligned}
\]
For $r\geq K$, we can write $B^1_r-B^2_r=B^1_K-B^2_K+\bar 
B_{2(r-K)}$, where $\bar B$ 
is another Brownian motion independent of $B$ and the random environment $V$. Thus, recall the definition of $\hat{\E}$ in \eqref{e.defhatE}, we have
\[
\begin{aligned}
\eps^{-(d-2)}\E[I_{3,\eps}^2]=& \int_{\eps^2K}^t  \int_{\R^{2d}} g(x+\eps y)g(x)\\
&\times\hat{\E}_{y,0}\E_{\bar B}
\Big[\exp\{\beta^2 \int_K^{s/\eps^2}R(y+\X_K+\bar B_{2(r-K)})dr\} R(y+\X_K+\bar B_{2(s/\eps^2-K)})\Big]dxdyds.
\end{aligned}
\]
We write the expectation with respect to $\bar B$ more explicitly by conditioning on the end point of~$\bar B$:
\[
\begin{aligned}
  &\E_{\bar B}\left[ e^{\beta^2 \int_K^{s/\eps^2}R(y+\X_K+\bar B_{2(r-K)})dr} R(y+\X_K+\bar B_{2(s/\eps^2-K)})\right]\\
  &= \int_{\R^d} G_{2(s-\eps^2K)}(w) R(y+\X_K+w/\eps)
  \E_{\bar B}\Big[ e^{\beta^2 \int_K^{s/\eps^2}R(y+\X_K+\bar B_{2(r-K)})dr}  
  \big|\bar B_{2(s/\eps^2-K)}=w/\eps\Big] dw.
\end{aligned}
\]
Now we consider the integral in $y$, change variable $y\mapsto y-{w}/{\eps}$ and use the time-reversal of the Brownian bridge, then the expectation in $\bar B$ in the last display
becomes 
\[
 \E_{\bar B}
 \Big[ e^{\beta^2 \int_K^{s/\eps^2}R(y+\X_K-w/\eps+\bar B_{2(r-K)})dr}  
 \big|\bar B_{2(s/\eps^2-K)}=\frac{w}\eps\Big]
= \E_{\bar B}\Big[ e^{\beta^2 \int_0^{s/\eps^2-K} 
R(y+\X_K+\bar B_{2r})dr}\big|\bar B_{2(s/\eps^2-K)}=-\frac{w}\eps\Big],
\]
and we can write 
\[
\begin{aligned}
\eps^{-(d-2)}\E[I_{3,\eps}^2]=& \int_{\eps^2K}^t \int_{\R^{3d}} g(x+\eps y-w)g(x) G_{2(s-\eps^2K)}(w)
\\ &\times 
\hat{\E}_{y-{w}/{\eps},0}[R(y+\X_K) \cH(s/\eps^2-K,y+\X_K,-w/\eps)] dxdydwds.
\end{aligned}
\]
We change back  variables in the form   $w\mapsto w+\eps y$ to obtain 
\[
\begin{aligned}
\eps^{-(d-2)}\E[I_{3,\eps}^2]=& \int_{\eps^2K}^t \int_{\R^{3d}} g(x-w)g(x) G_{2(s-\eps^2K)}(w+\eps y)\\
&\times \hat{\E}_{-w/\eps,0}[ R(y+\X_K) \cH(s/\eps^2-K, y+\X_K,-w/\eps-y)] dxdydwds.
\end{aligned}
\]
Finally, we change variables $y\mapsto y-\X_K$ and $s\mapsto s+\eps^2K$ to complete the proof.
\end{proof}

\begin{proof}[Proof of Lemma~\ref{l.i3bd}]
By Lemma~\ref{l.exmm}, we know that $\cH$ is uniformly bounded for small $\beta$, so that
\[
\G_\eps(s) \les \int_{\R^{3d}}|g(x-w)g(x)|\,R(y) \hat{\E}_{-w/\eps,0}[G_{2s}(w+\eps y-\eps \X_K)]dxdydw.
\]
We bound the expectation by Lemma~\ref{l.holder}: for any $q>1$, if $\beta<\beta(q)$ then 
\[
\begin{aligned}
 \hat{\E}_{-w/\eps,0}[G_{2s}(w+\eps y-\eps \X_K)]=&\E\left[\frac{\E_B[ M_1(K,-w/\eps)M_2(K,0)G_{2s}(w+\eps y-\eps \X_K)]}{Z(K,-w/\eps)Z(K,0)}\right]\\
 \les &\E_B[|G_{2s}(w+\eps y-\eps \X_K)|^{q}]^{1/q}.
\end{aligned}
\]
Recaling that $\X_K=B^1_K-B^2_K\sim N(0,2K)$, the above expectation can be computed explicitly:
\[
\begin{aligned}
\E_B[|G_{2s}(w+\eps y-\eps \X_K)|^{q}]^{1/q}\les \left(\frac{1}{s^{d(q-1)/2}} G_{\frac{2s}{q}+2\eps^2K}(w+\eps y)\right)^{1/q}.
\end{aligned}
\]
Thus 
\begin{equation}\label{dec302}
\G_\eps(s)\les \int_{\R^{3d}} |g(x)| \,R(y)  \left(\frac{1}{s^{d(q-1)/2}} G_{\frac{2s}{q}+2\eps^2K}(w)\right)^{1/q} dxdydw \les s^{-\frac{d}{2p}},
\end{equation}
where $(p,q)$ are the dual 
H\"older exponents and we used the fact that $s<t$ 
hence $2s/q+2\eps^2K\les1$ in the last step. We choose $p>d/2$ and adjust 
$\beta$ accordingly to complete the proof.
\end{proof}

\begin{proof}[Proof of Lemma~\ref{l.i3lim}]
Recall that 
\[
\begin{aligned}
\G_\eps(s)=\int_{\R^{3d}} &g(x-w)g(x) R(y)
\hat{\E}_{-w/\eps,0}\left[G_{2s}(w+\eps y-\eps \X_K)\cH(\frac{s}{\eps^2},y,\X_K-\frac{w}{\eps}-y)\right]dxdydw.
\end{aligned}
\]
Since $s>0$ is fixed, the expectation in the above expression is bounded uniformly in $x,y,w$, so we only need to pass to the limit of the expectation for fixed $x,y,w\in\R^d$ and $w\neq0$. The proof is divided into three steps.

(i) We show that $\hat{\E}_{-w/\eps,0}[|G_{2s}(w+\eps y-\eps \X_K)-G_{2s}(w)|]\to0$ as $\eps\to0$. Using the fact that 
\[
|G_{2s}(w+\eps y-\eps \X_K)-G_{2s}(w)|\les \eps|y|+\eps |\X_K|,
\]
it suffices to show $\hat{\E}_{-w/\eps,0}[|\eps \X_K|]\to0$. 
We apply Lemma~\ref{l.holder} to get
\begin{equation}\label{e.se202}
\hat{\E}_{-w/\eps,0}[|\eps \X_K|] \les \sqrt{ \E_B[|\eps \X_K|^2]}=\sqrt{2\eps^2K}\to0.
\end{equation}

(ii) For $\alpha\in(0,2)$, define 
\[
\tilde{\cH}_\eps=\E_{\bar B}\Big[ \exp\Big\{\beta^2 \int_0^{s/\eps^\alpha} 
R(y+{\bar B}_{2r})dr\Big\}\Big|\bar B_{2s/\eps^2}=\X_K-\frac{w}{\eps}-y\Big],
\]
we show that 
\begin{equation}\label{e.se201}
  \hat{\E}_{-w/\eps,0}[\tilde{\cH}_\eps]\to 
  \E_{\bar B}\left[\exp\Big\{\beta^2 \int_0^\infty R(y+\bar B_{2s})ds\Big\}\right]
\end{equation}
 as $\eps\to0$. Note that $\tilde{\cH}$ can be written more explicitly by conditioning on $\bar B_{2s/\eps^\alpha}$:
\[
\begin{aligned}
\tilde{\cH}_\eps=&\E_{\bar B}\Big[\exp\Big\{\beta^2\int_0^{s/\eps^\alpha}R(y+\bar B_{2r})dr
\Big\} \times  \frac{G_{2s(1-\eps^{2-\alpha})}(\eps \X_K-w-\eps y-\eps \bar B_{2s/\eps^\alpha})}{G_{2s}(\eps \X_K-w-\eps y)}\Big]\\
=&\frac{1}{(1-\eps^{2-\alpha})^{d/2}} 
\E_{\bar B}\Big[ \exp\Big\{\beta^2\int_0^{s/\eps^\alpha}R(y+\bar B_{2r})dr\Big\}\\ 
&\times
\exp\Big\{-\frac{(\eps \X_K-w-\eps y-\eps \bar B_{2s/\eps^\alpha})^2}
{4s(1-\eps^{2-\alpha})}+\frac{(\eps \X_K-w-\eps y)^2}{4s}\Big\}\Big].
\end{aligned}
\]
There are three factors inside the above expectation. First, we note that again
by an application of Lemma~\ref{l.holder}, we have
\[
\hat{\E}_{-w/\eps,0}[e^{\lambda |\eps \X_K|^2}]\les1,
\]
for any $\lambda>0$. Then by the same proof as
for (i), we can replace the second factor by $e^{-w^2/4s}$ with a negligible error. Finally we use the simple inequality $|e^x-e^y|\leq (e^x+e^y)|x-y|$ to replace the third factor by $e^{w^2/4s}$ with a negligible error. This proves \eqref{e.se201}.

(iii) We show that 
\begin{equation}\label{e.se203}
\hat{\E}_{-w/\eps,0}[|\cH(\frac{s}{\eps^2},y,\X_K-\frac{w}{\eps}-y)-\tilde{\cH}_\eps|]\to0
\end{equation}
as $\eps\to0$. We decompose the expectation into two parts according to the value of $\X_K$:
\[
\hat{\E}_{-w/\eps,0}[|\cH(\frac{s}{\eps^2},y,\X_K-\frac{w}{\eps}-y)-\tilde{\cH}_\eps|\,\1_{|\eps \X_K|>w/2}]+\hat{\E}_{-w/\eps,0}[|\cH(\frac{s}{\eps^2},y,\X_K-\frac{w}{\eps}-y)-\tilde{\cH}_\eps|\,\1_{|\eps \X_K|\leq w/2}].
\]
For the first term, applying Lemma~\ref{l.exmm} and \eqref{e.se202} yields 
\[
\hat{\E}_{-w/\eps,0}[|\cH(\frac{s}{\eps^2},y,\X_K-\frac{w}{\eps}-y)-\tilde{\cH}_\eps|\, \1_{|\eps \X_K|>w/2}]\les \hat{\E}_{-w/\eps,0}[\1_{|\eps \X_K|>w/2}]\to0.
\]
 For the second term, we have,
again using Lemma~\ref{l.exmm}, that
\begin{equation}\label{dec306}
\begin{aligned}
&|\cH(\frac{s}{\eps^2},y,\X_K-\frac{w}{\eps}-y)-\tilde{\cH}_\eps|\\
&=\E_{\bar B}\Big[ \exp\Big\{\beta^2\int_0^{s/\eps^2} R(y+\bar B_{2r})dr\Big\}
-\exp\Big\{\beta^2 \int_0^{s/\eps^\alpha} R(y+\bar B_{2r})dr\Big\} 
\,\Big|\,\bar B_{2s/\eps^2}=\X_K-\frac{w}{\eps}-y\Big]\\
&\leq \beta^2 \E_{\bar B}\Big[ \exp\Big\{\beta^2\int_0^{s/\eps^2} R(y+\bar B_{2r})dr\Big\}
\int_{s/\eps^\alpha}^{s/\eps^2}R(y+\bar B_{2r})dr
\,\Big|\,
\bar B_{2s/\eps^2}=\X_K-\frac{w}{\eps}-y\Big]\\
&\les \sqrt{\E_{\bar B}\Big[ \big(\int_{s/\eps^\alpha}^{s/\eps^2}R(y+\bar B_{2r})dr\big)^2 
\,\Big|\,\bar B_{2s/\eps^2}=\X_K-\frac{w}{\eps}-y\Big]}.
\end{aligned}
\end{equation}
By the condition of $|\eps\X_K|<w/2$ and $w\neq0$, we have $\eps \X_K-w-\eps y$ is away from the origin for small $\eps$. An application of Lemma~\ref{l.bridgetail} shows the above term goes to zero, uniformly in $|\eps \X_K|<w/2$. This completes the proof of \eqref{e.se203}.

To summarize, we have 
\[
\begin{aligned}
  \G_\eps(s)\to &\int_{\R^{3d}} g(x-w)g(x)R(y)G_{2s}(w)  
  \E_{\bar B}\Big[\exp\Big\{\beta^2 \int_0^\infty R(y+\bar B_{2s})ds\Big\}\Big] dxdydw\\
&=\nu_{\mathrm{eff}}^2 \int_{\R^{2d}} g(x-w)g(x)G_{2s}(w)dxdw,
\end{aligned}
\]
which completes the proof.
\end{proof}

\subsection{Proof of Proposition~\ref{p.convar}}

Recall that $X_\eps-\E[X_\eps]=$ $\beta(I_{1,\eps}+I_{2,\eps}+I_{3,\eps})$. 
Choosing $K=\eps^{-\alpha}$ with $\alpha\in(0,\frac{4}{2+d})$ and $\beta$ small, we combine Lemmas~\ref{l.i1}, \ref{l.i2} 
and \eqref{e.coni3} to obtain
\[
\eps^{-(d-2)}\Var[X_\eps]\to \sigma_t^2.
\]
\begin{remark}\label{r.kpz-she}
A simpler version of the proof will show the convergence of 
\[
\eps^{-(d-2)}\Var\left[\int_{\R^d} Z_\eps(t,x)g(x)dx\right]\to \sigma_t^2,
\]
that is, the convergence of the variance for the solution of 
the stochastic heat equation itself. 
A key identity in the proof of Lemma~\ref{l.i3re} is \eqref{e.var1}:
\[
\begin{aligned}
\E[I_{3,\eps}^2]= &\int_{K}^{t/\eps^2}\int_{\R^{2d}} g(x_1)g(x_2)\\
&\times  \hat{\E}_{\frac{x_1}{\eps},\frac{x_2}{\eps}}\E_B\Big[ 
\exp\Big\{\beta^2 \int_K^sR(\frac{x_1-x_2}{\eps}+B^1_r-B^2_r)dr\Big\} 
R(\frac{x_1-x_2}{\eps}+B^1_s-B^2_s)\Big]dx_1dx_2ds.
\end{aligned}
\]
For the stochastic heat equation, it is straightforward to check that
the above term becomes 
\[
\begin{aligned}
& \int_{K}^{t/\eps^2}\int_{\R^{2d}} g(x_1)g(x_2) 
\E_B\Big[\exp\Big\{\beta^2 \int_K^sR(\frac{x_1-x_2}{\eps}+B^1_r-B^2_r)dr\Big\} 
R(\frac{x_1-x_2}{\eps}+B^1_s-B^2_s)\Big]dx_1dx_2ds.
\end{aligned}
\]
The difference between these expressions
comes from the distribution of $(B^1_K,B^2_K)$. In the case of~KPZ, $B^j_K$ are distributed according to the polymer measure; in the case of SHE, $B^j_K$ is not weighed by the environment hence has distribution $N(0,K)$. It is clear from \eqref{e.se202} that the asymptotic behaviors of $\eps B^j_K$ are the same in two cases, which leads to the same limiting variances.
\end{remark}

\section{Gaussianity}\label{s.gauss}
We turn to the proof of Proposition~\ref{p.gauss}. The second order Poincar\'e
inequality (\ref{e.2ndpoincare}) reduces our task to showing that 
\[
\E[\|DX_\eps\|_H^4]^{1/4} \E[\|D^2X_\eps\|_{\mathrm{op}}^4]^{1/4}=o(\eps^{d-2}),  \mbox{ as } \eps\to0.
\]
Since 
\[
DX_\eps= \int_{\R^d} \frac{DZ_\eps(t,x)}{Z_\eps(t,x)}g(x)dx,
\]
we have
\[
\begin{aligned}
D^2X_\eps=&D\int_{\R^d} \frac{DZ_\eps(t,x)}{Z_\eps(t,x)} g(x)dx
=\int_{\R^d} \frac{ Z_\eps (t,x)D^2Z_\eps(t,x)-DZ_\eps(t,x)\otimes DZ_\eps(t,x)}{Z^2_\eps(t,x)}g(x)dx.
\end{aligned}
\]
Using the Feynman-Kac representation (\ref{dec312})-(\ref{dec314}) and the definition of $\Phi^\eps_{t,x,B}$ in \eqref{e.defPhieps} gives
\[
D^2Z_\eps(t,x)=\beta^2 \E_B[ M_\eps(t,x)\Phi_{t,x,B}^\eps\otimes \Phi_{t,x,B}^\eps],
\]
so that
\[
Z_\eps(t,x)D^2Z_\eps(t,x)=\beta^2 \E_{B}\Big[
\prod_{j=1}^2M_{\eps,j}(t,x)\Phi_{t,x,B^2}^\eps\otimes \Phi_{t,x,B^2}^\eps\Big],
\]
and
\[
DZ_\eps(t,x)\otimes D Z_\eps(t,x)=\beta^2 \E_{B}\Big[
\prod_{j=1}^2M_{\eps,j}(t,x)\Phi_{t,x,B^1}^\eps\otimes \Phi_{t,x,B^2}^\eps\Big].
\]
Thus, we can write 
\[
\begin{aligned}
D^2X_\eps&=\beta^2\int_{\R^d}  \frac{ \E_{B}[\prod_{j=1}^2M_{\eps,j}(t,x)(\Phi_{t,x,B^2}^\eps-\Phi_{t,x,B^1}^\eps)\otimes \Phi_{t,x,B^2}^\eps]}{Z_\eps^2(t,x)} g(x)dx
=\cP_2-\cP_1,
\end{aligned}
\]
where 
\[
\cP_k=\beta^2\int_{\R^d}  \frac{ \E_{B}[\prod_{j=1}^2M_{\eps,j}(t,x)\Phi_{t,x,B^k}^\eps
\otimes \Phi_{t,x,B^2}^\eps]}{Z^2_\eps(t,x)} g(x)dx\in H\otimes H.
\]
Recall that $\Phi_{t,x,B}^\eps$, 
defined in \eqref{e.defPhieps}, is an element of $H=L^2(\R^{d+1})$ for each $(\eps,t,x,B)$ fixed.
Thus, we have
\[
\|D^2X_\eps\|_{\mathrm{op}}^4\les \|\cP_1\|_{\mathrm{op}}^4+\|\cP_2\|_{\mathrm{op}}^4,
\]
and we only need to estimate $\E[\|\cP_k\|_{\mathrm{op}}^4]$. 

\subsection{The first derivative}
The goal of this section is to show the following lemma.
\begin{lemma}\label{l.1stde}
For any $\delta>0$, there exists $\beta(\delta)>0$ such that if $\beta<\beta(\delta)$,
then 
\[
\E[\|DX_\eps\|_H^4]^{1/4} \les \eps^{\frac{d-2}{2}-\delta}.
\]
\end{lemma}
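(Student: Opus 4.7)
The plan is to reduce $\E[\|DX_\eps\|_H^4]$ to a fourth-moment analogue of the variance computation for $I_{1,\eps}$ in Lemma~\ref{l.i1}, and then bound it using Lemma~\ref{l.holder} with $n=4$ together with Lemma~\ref{l.qmm}. First I would start from
\[
D_{s,y}X_\eps = \beta\int_{\R^d} g(x)\,\frac{\E_B[M_\eps(t,x)\Phi^\eps_{t,x,B}(s,y)]}{Z_\eps(t,x)}dx,
\]
square, integrate in $(s,y)$, and use the identity $\int \Phi^\eps_{t,x_1,B^1}(s,y)\Phi^\eps_{t,x_2,B^2}(s,y)\,dyds = \cR(t/\eps^2,(x_1-x_2)/\eps,B^1,B^2)$ recorded just before Lemma~\ref{l.holder} to obtain
\[
\|DX_\eps\|_H^2 = \beta^2\int_{\R^{2d}} g(x_1)g(x_2)\,\frac{\E_B[M_{\eps,1}(t,x_1)M_{\eps,2}(t,x_2)\,\cR_{12}]}{Z_\eps(t,x_1)Z_\eps(t,x_2)}\,dx_1dx_2,
\]
where $\cR_{ij}:=\cR(t/\eps^2,(x_i-x_j)/\eps,B^i,B^j)$. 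Squaring once more and using that $B^1,\ldots,B^4$ may be taken mutually independent so that the product of two $\E_B$'s is a single $\E_B$ over four independent Brownian motions, I would arrive at
\[
\E[\|DX_\eps\|_H^4] = \beta^4\int_{\R^{4d}} \prod_{i=1}^4 g(x_i)\,\E\Big[\frac{\E_B[\prod_{j=1}^4 M_{\eps,j}(t,x_j)\,\cR_{12}\cR_{34}]}{\prod_{j=1}^4 Z_\eps(t,x_j)}\Big]\,dx_1\cdots dx_4.
\]

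The central step is then to apply Lemma~\ref{l.holder} with $n=4$ and $F = \cR_{12}\cR_{34}\geq 0$, which for $\beta$ small enough in terms of $q$ bounds the inner bracket by $\E_B[(\cR_{12}\cR_{34})^q]^{1/q}$. Since $\cR_{12}$ depends only on $(B^1,B^2)$ and $\cR_{34}$ only on $(B^3,B^4)$, independence lets this factor as $\E_B[\cR_{12}^q]^{1/q}\E_B[\cR_{34}^q]^{1/q}$, and Lemma~\ref{l.qmm} supplies $\E_B[\cR_{ij}^q]^{1/q}\les \eps^{(d-2)/q - 2/p}/|x_i-x_j|^{(d-2)/q}$. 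Because $(d-2)/q<d$, the kernel $|x_1-x_2|^{-(d-2)/q}$ is locally integrable, and $g$ has compact support, so the resulting spatial integral is finite, giving
\[
\E[\|DX_\eps\|_H^4]^{1/4} \les \eps^{(d-2)/(2q) - 1/p} = \eps^{(d-2)/2 - d/(2p)}.
\]
Given $\delta>0$, I would choose $p$ large enough that $d/(2p)<\delta$ and then set $\beta(\delta):=\beta(4,p/(p-1))$ from Lemma~\ref{l.holder}.

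I do not anticipate a serious obstacle: the estimate is essentially a doubled version of the variance bound in Lemma~\ref{l.i1}, with the polynomial decay $|x_i-x_j|^{-(d-2)/q}$ from Lemma~\ref{l.qmm} providing the dominant $\eps^{(d-2)/2}$ scaling. The only mild subtlety is in the exponent bookkeeping: Lemma~\ref{l.holder}'s smallness condition on $\beta$ depends on $q$, so pushing $q$ toward $1$ (equivalently $p$ toward infinity) in order to close the gap to $(d-2)/2$ forces $\beta(\delta)\to 0$ as $\delta\to 0$, which is precisely the form of the stated lemma.
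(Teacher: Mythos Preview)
Your proposal is correct and follows essentially the same approach as the paper: you expand $\|DX_\eps\|_H^4$ as an integral over $\R^{4d}$ involving $\cR_{12}\cR_{34}$, apply Lemma~\ref{l.holder} with $n=4$, factor by independence of $(B^1,B^2)$ and $(B^3,B^4)$, and then invoke Lemma~\ref{l.qmm} to obtain the exponent $(d-2)/2-d/(2p)$, choosing $p$ large. This is exactly what the paper does.
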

\begin{proof}
A direct calculation gives
\[
\|DX_\eps\|_H^4
=\beta^4\int_{\R^{4d}} \prod_{j=1}^4 \frac{g(x_j)}{Z_\eps(t,x_j)}  
\E_{B}\Big[ \prod_{j=1}^4 M_{\eps,j}(t,x_j) 
\cR(\frac{t}{\eps^2},\frac{x_1-x_2}{\eps},B^1,B^2)
\cR(\frac{t}{\eps^2},\frac{x_3-x_4}{\eps},B^3,B^4)\Big]dx,
\]
with $\cR$ defined in \eqref{e.defcR}. Taking the expectation $\E$ and applying 
Lemma~\ref{l.holder}, we have
\[
\begin{aligned}
\E[\|DX_\eps\|_H^4] \les& \int_{\R^{4d}} \prod_{j=1}^4 |g(x_j)| \E_B\left[\cR^q(\frac{t}{\eps^2},\frac{x_1-x_2}{\eps},B^1,B^2)\cR^q(\frac{t}{\eps^2},\frac{x_3-x_4}{\eps},B^3,B^4)\right]^{1/q}\\
\les & \eps^{\frac{2(d-2)}{q}-\frac{4}{p}}\int_{\R^{4d}} \prod_{j=1}^4 |g(x_j)|  \frac{1}{|x_1-x_2|^{\frac{d-2}{q}}}\frac{1}{|x_3-x_4|^{\frac{d-2}{q}}}dx \les  \eps^{\frac{2(d-2)}{q}-\frac{4}{p}}.
\end{aligned}
\]
We applied Lemma~\ref{l.qmm} in the next to last step. It remains 
to choose $p$ large enough so
that 
\[
\frac{2(d-2)}{q}-\frac{4}{p}=\frac{2d}{q}-4>2(d-2)-4\delta
\]
to complete the proof.
\end{proof}

\subsection{The second derivative}

To estimate $\|\cP_k\|_{\mathrm{op}}$, we use the contraction inequality \cite[Proposition 4.1]{nourdin2009second}, which says that 
\[
\|\cP_k\|_{\mathrm{op}}^4\leq \|\cP_k\otimes_1\cP_k\|_{H\otimes H}^2.
\]
Here $\cP_k\otimes_1\cP_k$ is the random element of $H\otimes H$ obtained as the contraction of the symmetric
random tensor $\cP_k$.

\subsubsection{The case $k=1$} 

A direct calculation gives 
\[
\begin{aligned}
&\cP_1\otimes_1\cP_1
=\beta^4\int_{\R^{2d}} \frac{\E_B[ \prod_{j=1}^4 M_{\eps,j}(t,x_j) \cR(\frac{t}{\eps^2},\frac{x-y}{\eps},B^1,B^3)\Phi^\eps_{t,x,B^2}\otimes \Phi^\eps_{t,y,B^4}]}{Z_\eps^2(t,x)Z_\eps^2(t,y)} g(x)g(y)dxdy,
\end{aligned}
\]
where we write $x_1=x_2=x,x_3=x_4=y$ to simplify the notations. Thus, 
\[
\begin{aligned}
\|\cP_1\otimes_1\cP_1\|_{H\otimes H}^2=\beta^8\int_{\R^{4d}}&g(x)g(y)g(z)g(w)
\Big(\prod_{j=1}^8 Z_\eps(t,x_j)\Big)^{-1} \\
&\times \E_B\Big[ \prod_{j=1}^8 M_{\eps,j}(t,x_j)\prod_{(i,k)\in \mathcal{O}} \cR(\frac{t}{\eps^2},\frac{x_i-x_k}{\eps},B^i,B^k) \Big]dxdydzdw,
\end{aligned}
\]
where $x_5=x_6=z, x_7=x_8=w$, and the set $\mathcal{O}$ is 
\[
\mathcal{O}=\{(1,3),(5,7),(2,6),(4,8)\}.
\]
\begin{lemma}\label{l.p1}
For any $\delta>0$, there exists $\beta(\delta)$ such that if $\beta<\beta(\delta)$,  
\[
\E[\|\cP_1\otimes_1\cP_1\|_{H\otimes H}^2]\les \eps^{4d-8-\delta}\1_{d<8} +\eps^{3d-8-\delta}\1_{d\geq 8}.
\]
\end{lemma}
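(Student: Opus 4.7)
The plan is to apply Lemma~\ref{l.holder} to eliminate the negative moments of $Z_\eps$, exploit the fact that the four pairs in $\mathcal{O}$ involve pairwise disjoint Brownian motions to factor the resulting Brownian expectation, and then bound each factor using Lemma~\ref{l.qmm}. The dichotomy $d<8$ versus $d\ge 8$ in the statement will emerge from whether the ensuing spatial integral is convergent when Lemma~\ref{l.qmm} is applied to all four factors.

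More precisely, applying Lemma~\ref{l.holder} with $n=8$ (and taking $\beta$ small depending on $q$) bounds $\E[\|\cP_1\otimes_1\cP_1\|^2_{H\otimes H}]$ by
\[
\int_{\R^{4d}}|g(x)g(y)g(z)g(w)|\,\E_B\Big[\prod_{(i,k)\in\mathcal{O}}\cR(\tfrac{t}{\eps^2},\tfrac{x_i-x_k}{\eps},B^i,B^k)^q\Big]^{1/q}\,dxdydzdw.
\]
The crucial combinatorial feature of $\mathcal{O}=\{(1,3),(5,7),(2,6),(4,8)\}$ is that each of $\{1,\dots,8\}$ appears in exactly one pair, so the four $\cR$-factors depend on pairwise disjoint Brownian motions and are thus independent under $\E_B$; the expectation factorizes into a product of four terms, each controlled by Lemma~\ref{l.qmm}.

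In the regime $d<8$, I would apply Lemma~\ref{l.qmm} to all four factors, picking up the prefactor $\eps^{4(d-2)/q-8/p}$ together with a spatial integral of $|g(x)g(y)g(z)g(w)|$ divided by $(|x-y||z-w||x-z||y-w|)^{(d-2)/q}$. Translating $(y,z,w)\mapsto(x+u,x+v,x+w')$ and using the compact support of $g$ reduces this to an integral of $(|u||v||v-w'||u-w'|)^{-(d-2)/q}$ over a bounded set in $\R^{3d}$. The worst singular point is the origin $u=v=w'=0$, where the integrand has degree $4(d-2)/q$ against a $3d$-dimensional volume element, so the integral converges iff $q>4(d-2)/(3d)$. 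This is compatible with $q>1$ precisely when $d<8$, and then $q$ slightly above $1$ together with $p$ large gives the claimed $\eps^{4d-8-\delta}$ bound.

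The hard case, and the point of the dichotomy, is $d\ge 8$, where the previous spatial integral diverges at the origin. I would work around this by replacing the $(1,3)$-factor by the deterministic bound $\cR(t/\eps^2,\cdot,\cdot,\cdot)\le \|R\|_\infty\,t/\eps^2\les \eps^{-2}$, which follows from the boundedness of $R$. This costs a factor $\eps^{-2}$ but removes the $|x-y|^{-(d-2)/q}$ singularity from the spatial integral. Applying Lemma~\ref{l.qmm} to the three remaining factors then yields the prefactor $\eps^{-2+3(d-2)/q-6/p}$ together with a three-singularity integral whose total singular degree $3(d-2)/q$ never exceeds $3d$, so it converges uniformly in $\eps$ for every $q>1$. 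Sending $q\to 1^+$ and $p\to\infty$ produces $\eps^{3d-8-\delta}$, completing both cases.
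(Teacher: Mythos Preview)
Your proposal is correct and follows essentially the same approach as the paper: apply Lemma~\ref{l.holder}, factor the Brownian expectation over the disjoint pairs in $\mathcal{O}$, use Lemma~\ref{l.qmm} on all four factors for $d<8$ and on only three factors (with the trivial bound $\cR\les\eps^{-2}$ on the fourth) for $d\ge 8$. The only cosmetic difference is that the paper justifies finiteness of the spatial integral via the iterated convolution inequality $\int_{|y|\le M}|x-y|^{-\alpha_1}|y|^{-\alpha_2}dy\les|x|^{-(\alpha_1+\alpha_2-d)}\1_{\alpha_1+\alpha_2>d}+\1_{\alpha_1+\alpha_2<d}$, whereas you invoke a power-counting heuristic at the origin; your heuristic gives the right threshold but, to be fully rigorous, would need to be upgraded to exactly such an iterated convolution (or a dyadic decomposition) to rule out divergences along the lower-dimensional singular strata.
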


\begin{proof}
Applying Lemma~\ref{l.holder}, we have 
\[
\begin{aligned}
\E[\|\cP_1\otimes_1\cP_1\|_{H\otimes H}^2] \les& \int_{\R^{4d}} |g(x)g(y)g(z)g(w)| \times\E_B\Big[\prod_{(i,k)\in \mathcal{O}} 
\cR^q(\frac{t}{\eps^2},\frac{x_i-x_k}{\eps},B^i,B^k)\Big]^{1/q} dxdydzdw\\
=&\int_{\R^{4d}} |g(x)g(y)g(z)g(w)|\times \prod_{(i,k)\in \mathcal{O}} 
\E_B\Big[\cR^q(\frac{t}{\eps^2},\frac{x_i-x_k}{\eps},B^i,B^k)\Big]^{1/q}dxdydzdw
\end{aligned}
\]
for some $q>1$. We discuss two cases.

(i) $d<8$. Applying Lemma~\ref{l.qmm} to all pairs $(i,k)\in \mathcal{O}$, the above integral is bounded by 
\[
\eps^{4(\frac{d-2}{q}-\frac{2}{p})}\int_{\R^{4d}} \frac{ |g(x)g(y)g(z)g(w)|}{(|x-y||z-w||x-z||y-w|)^{(d-2)/q}}dxdydzdw.
\]
Since $g\in\C_c$, by the elementary inequality 
\[
\int_{|y|\leq M} \frac{1}{|x-y|^{\alpha_1}}\frac{1}{|y|^{\alpha_2}}dy\les \frac{\1_{\alpha_1+\alpha_2>d}}{|x|^{\alpha_1+\alpha_2-d}}+\1_{\alpha_1+\alpha_2<d}, \quad       \mbox{ if }\quad \alpha_1<d,\alpha_2<d, \alpha_1+\alpha_2\neq d,
\]
 the above integral is bounded in $d<8$. Thus, we have, for $p$ sufficiently large,
\[
\E[\|\cP_1\otimes_1\cP_1\|_{H\otimes H}^2] \les \eps^{4d-8-\delta}.
\]

(ii) $d\geq 8$. Applying
Lemma~\ref{l.qmm} to three pairs of $(i,k)\in \mathcal{O}$, and bounding
the fourth pair simply by $\eps^{-2}$, and using the above elementary inequality again, we have for large $p$ that 
\[
\begin{aligned}
\E[\|\cP_1\otimes_1\cP_1\|_{H\otimes H}^2]& \les \eps^{3(\frac{d-2}{q}-\frac{2}{p})-2} \int_{\R^{4d}}\frac{ |g(x)g(y)g(z)g(w)|}{(|x-y||z-w||x-z|)^{(d-2)/q}}dxdydzdw\\
&\les  \eps^{3(d-2)-2-\delta}.
\end{aligned}
\]
The proof is complete.
\end{proof}

\subsubsection{The case $k=2$} 

In this case, 
\[
\cP_2= \beta^2\int_{\R^d} \frac{\E_B[ M_\eps(t,x)\Phi^\eps_{t,x,B}\otimes \Phi^\eps_{t,x,B}]}{Z_\eps(t,x)}g(x)dx,
\]
so 
\[
\begin{aligned}
&\cP_2\otimes_1\cP_2= \beta^4\int_{\R^{2d}} \frac{\E_B[ \prod_{j=1}^2 M_{\eps,j}(t,x_j) \cR(\frac{t}{\eps^2},\frac{x_1-x_2}{\eps},B^1,B^2)\Phi^\eps_{t,x_1,B^1}\otimes \Phi^\eps_{t,x_2,B^2}]}{Z_\eps(t,x_1)Z_\eps(t,x_2)} g(x_1)g(x_2)dx_1dx_2,
\end{aligned}
\]
and
\[
\|\cP_2\otimes_1 \cP_2\|_{H\otimes H}^2=\beta^8 \int_{\R^{4d}} \prod_{j=1}^4 \frac{g(x_j)}{Z_\eps(t,x_j)} \E_B\Big[ \prod_{j=1}^4 M_{\eps,j}(t,x_j) \prod_{(i,k)\in \tilde{\mathcal{O}}} \cR(\frac{t}{\eps^2},\frac{x_i-x_k}{\eps},B^i,B^k)\Big]dx,
\]
with 
\[
\tilde{\mathcal{O}}=\{(1,2),(3,4),(1,3),(2,4)\}.
\]

\begin{lemma}\label{l.p2}
For any $\delta>0$, there exists $\beta(\delta)$ such that if $\beta<\beta(\delta)$, 
\[
\E[\|\cP_2\otimes_1 \cP_2\|_{H\otimes H}^2]\les \eps^{3(d-2)-\delta}.
\]
\end{lemma}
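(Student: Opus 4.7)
My plan follows the same template used for Lemma~\ref{l.p1}. First, taking expectation and applying Lemma~\ref{l.holder} with $n=4$ (with $\beta$ sufficiently small) reduces the bound to
\[
\E\bigl[\|\cP_2\otimes_1\cP_2\|_{H\otimes H}^2\bigr]\les\int_{\R^{4d}}\prod_{j=1}^4|g(x_j)|\cdot\E_B\Big[\prod_{(i,k)\in\tilde{\mathcal O}}\cR_{ik}^q\Big]^{1/q}dx,
\]
where $\cR_{ik}=\cR(t/\eps^2,(x_i-x_k)/\eps,B^i,B^k)$ and $q>1$ is chosen close to $1$.

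Unlike the matching structure $\mathcal O$ of Lemma~\ref{l.p1}, the four pairs of $\tilde{\mathcal O}$ form a $4$-cycle $1\text{-}2\text{-}4\text{-}3\text{-}1$, so the four Brownian motions are entangled and we cannot directly factorize the $B$-expectation. However, $\tilde{\mathcal O}$ decomposes into two perfect matchings $\{(1,2),(3,4)\}$ and $\{(1,3),(2,4)\}$, and within each matching the two pairs use disjoint Brownians. I would therefore apply Cauchy-Schwarz to the product $(\cR_{12}^q\cR_{34}^q)\cdot(\cR_{13}^q\cR_{24}^q)$ and use independence within each matching to obtain
\[
\E_B\Big[\prod_{(i,k)\in\tilde{\mathcal O}}\cR_{ik}^q\Big]\le\sqrt{\E_B[\cR_{12}^{2q}]\E_B[\cR_{34}^{2q}]\E_B[\cR_{13}^{2q}]\E_B[\cR_{24}^{2q}]}.
\]
Lemma~\ref{l.qmm} with exponent $2q$ then controls each factor, producing a singular integrand of the form $\eps^{2(d-4q)/q}/\prod_{(i,k)\in\tilde{\mathcal O}}|x_i-x_k|^{(d-2)/(2q)}$; since $(d-2)/(2q)<d$, the $x$-integral over the compact support of $g$ is finite by iterating the elementary integration inequality used in Lemma~\ref{l.p1}. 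Finally I would take $q\to 1^+$ and $\beta$ correspondingly small.

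The hard part is that this Cauchy-Schwarz is lossy: it forces the use of the $L^{2q}$-moment of $\cR$ (giving per-factor size $\eps^{(d-4)/2}$) rather than the optimal $L^q$-moment (which would give $\eps^{d-2}$). A naive execution of the steps above therefore yields only $\eps^{2(d-4)-\delta}$, which is weaker than the target $\eps^{3(d-2)-\delta}$ by a factor of $\eps^{d+2}$. The missing powers of $\eps$ must come from exploiting the cyclic structure of $\tilde{\mathcal O}$: heuristically, once three of the four pair-meetings $\cR_{12},\cR_{34},\cR_{13}$ occur, the fourth $\cR_{24}$ is essentially constrained by the triangle inequality on the Brownian differences, since $(B^2-B^4)=(B^2-B^1)+(B^1-B^3)+(B^3-B^4)$ is then approximately determined by $x_{24}/\eps=(x_2-x_1+x_1-x_3+x_3-x_4)/\eps$. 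Making this quantitative---perhaps by a refined conditional bound that estimates one matching given the other, or by a direct estimate on the joint Gaussian density of the four Brownian differences at the four meeting times---will be the technical crux for closing this gap and obtaining the sharp $\eps^{3(d-2)-\delta}$.
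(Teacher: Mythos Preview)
Your first reduction via Lemma~\ref{l.holder} is exactly the paper's starting point, and you correctly diagnose that the Cauchy--Schwarz splitting of the $4$-cycle into two matchings is too crude: it yields only $\eps^{2(d-4)-\delta}$, short of the target by $\eps^{d+2}$. Your heuristic that the fourth meeting is constrained by the other three is morally right, but the proposal stops short of a proof.

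The paper closes the gap by a genuinely different route than your conditional/triangle idea. After Lemma~\ref{l.holder}, it first pulls the $x$-integral inside the $1/q$-power (H\"older in $x$ against the compactly supported $|g|$), obtaining
\[
\E[\|\cP_2\otimes_1\cP_2\|_{H\otimes H}^2]\les\Big(\int_{\R^{4d}}\prod_j|g(x_j)|^q\,\E_B\Big[\prod_{(i,k)\in\tilde{\mathcal O}}\cR_{ik}^q\Big]\,dx\Big)^{1/q},
\]
and then \emph{linearizes} each factor by the trivial bound $\cR_{ik}^q\le (t/\eps^2)^{q/p}\int_0^{t/\eps^2}R^q(\tfrac{x_i-x_k}{\eps}+B^i_s-B^k_s)\,ds$. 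This costs an innocuous $\eps^{-8/p}$ but reduces the problem to estimating
\[
\int_{\R^{4d}}\E_B\Big[\prod_{j=1}^4 f(x_j)\prod_{(i,k)\in\tilde{\mathcal O}}\int_0^{t/\eps^2}h\big(\tfrac{x_i-x_k}{\eps}+B^i_s-B^k_s\big)ds\Big]dx,
\]
with $f=|g|^q$, $h=R^q$. This is handled by an explicit Fourier computation (Lemma~\ref{l.bin}): writing the product in Fourier introduces phases $e^{i\eta_j\cdot(x_j-x_{j-1})/\eps}$, and the cyclic relation $\sum_j(\eta_{j+1}-\eta_j)=0$ means that the change of variables $\eta_1=\xi_1$, $\eta_j=\xi_1+\eps(\xi_2+\cdots+\xi_j)$ has Jacobian $\eps^{3d}$, not $\eps^{4d}$. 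After computing the Gaussian $B$-expectations and integrating the ordered times, one gets a bound $\les\eps^{-6}$ times an integrable function of $\xi$, for a net $\eps^{3(d-2)}$. The cyclic dependence you identified is thus exploited \emph{exactly} through the Fourier change of variables, not through a probabilistic conditional argument.
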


\begin{proof}
By Lemma~\ref{l.holder} and the fact that $g$ is compactly supported, we have 
\[
\begin{aligned}
\E[\|\cP_2\otimes_1 \cP_2\|_{H\otimes H}^2] \les &\int_{\R^{4d}} \prod_{j=1}^4 |g(x_j)|\times \E_B\left[ \prod_{(i,k)\in \tilde{\mathcal{O}}} \cR^q(\frac{t}{\eps^2},\frac{x_i-x_k}{\eps},B^i,B^k)\right]^{1/q}dx\\
\les &\Big(\int_{\R^{4d}} \prod_{j=1}^4|g(x_j)|^q\times \E_B\Big[   
\prod_{(i,k)\in \tilde{\mathcal{O}}} 
\cR^q(\frac{t}{\eps^2},\frac{x_i-x_k}{\eps},B^i,B^k)\Big] dx\Big)^{1/q}.
\end{aligned}
\]
Since 
\[
\begin{aligned}
\cR^q(\frac{t}{\eps^2},\frac{x_i-x_k}{\eps},B^i,B^k)=
\Big(\int_0^{t/\eps^2}\!\!
R(\frac{x_i-x_k}{\eps}+B^i_s-B^k_s)ds\Big)^q
\leq \Big(\frac{t}{\eps^2}\Big)^{q/p}\int_0^{t/\eps^2}\!\! 
R^q(\frac{x_i-x_k}{\eps}+B^i_s-B^k_s)ds,
\end{aligned}
\]
we only need to control 
\[
\int_{\R^{4d}}  \prod_{j=1}^4|g(x_j)|^q \times
\E_B\Big[  \prod_{(i,k)\in \tilde{\mathcal{O}}} \int_0^{t/\eps^2} R^q(\frac{x_i-x_k}{\eps}+B^i_s-B^k_s)ds\Big]dx.
\]
Applying Lemma~\ref{l.bin}, we have 
\[
\E[\|\cP_2\otimes \cP_2\|_{H\otimes H}^2] \les 
\Big(\frac{1}{\eps^2}\Big)^{4/p} \eps^{3(d-2)/q}=\eps^{\frac{3(d-2)}{q}-\frac{8}{p}}.
\]
The proof is complete.
\end{proof}

\begin{lemma}\label{l.bin}
Assume $0\leq f,h\in\C_c^\infty(\R^d)$, then
\[
 \int_{\R^{4d}}  \E_B\Big[ \prod_{j=1}^4f(x_j) 
 \prod_{(i,k)\in \tilde{\mathcal{O}}} \int_0^{t/\eps^2} 
 h(\frac{x_i-x_k}{\eps}+B^i_s-B^k_s)ds\Big]dx \les \eps^{3(d-2)}.
 \]
\end{lemma}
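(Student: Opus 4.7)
The plan is to perform a spatial change of variables that exposes the $4$-cycle structure of $\tilde{\mathcal{O}}$ as a single linear constraint among the four pair differences, then reduce the Brownian expectation to a single Gaussian-density estimate and carry out a singularity analysis in time.

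First, I would expand the four time integrals and use Fubini, so that for each fixed $s = (s_1, s_2, s_3, s_4) \in [0,T]^4$ with $T = t/\eps^2$, writing $C_\ell := B^{i_\ell}_{s_\ell} - B^{k_\ell}_{s_\ell}$, the task reduces to bounding $\int_{\R^{4d}}\prod_j f(x_j)\prod_\ell h(\frac{x_{i_\ell}-x_{k_\ell}}{\eps}+C_\ell)\,dx$. Introduce new variables $u_1 = (x_1-x_2)/\eps$, $u_2 = (x_3-x_4)/\eps$, $u_3 = (x_1-x_3)/\eps$ (keeping $x_1$ free); the Jacobian is $\eps^{3d}$, and crucially $(x_2-x_4)/\eps = u_2+u_3-u_1$, which is the $4$-cycle identity $(x_1-x_2)-(x_3-x_4)-(x_1-x_3)+(x_2-x_4)=0$. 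Using compact support of $f$ to absorb the $x_1$-integration as a constant, this yields
\[
\int dx\prod_j f(x_j)\prod_\ell h\big(\tfrac{x_{i_\ell}-x_{k_\ell}}{\eps}+C_\ell\big) \lesssim \eps^{3d}\int du\, h(u_1+C_1)\,h(u_2+C_2)\,h(u_3+C_3)\,h(u_2+u_3-u_1+C_4).
\]

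Next, I would shift $v_\ell := u_\ell + C_\ell$ for $\ell = 1, 2, 3$; the remaining factor becomes $h(v_2+v_3-v_1+E(s))$ with
\[
E(s) := C_1 - C_2 - C_3 + C_4 = (B^1_{s_1}-B^1_{s_3}) + (B^2_{s_4}-B^2_{s_1}) + (B^3_{s_3}-B^3_{s_2}) + (B^4_{s_2}-B^4_{s_4}),
\]
a $d$-dimensional centered Gaussian with covariance $\sigma^2(s)I_d$, where $\sigma^2(s) := |s_1-s_3|+|s_1-s_4|+|s_2-s_3|+|s_2-s_4|$. The integration over $(v_1,v_2,v_3)$ produces a constant multiple of $\|h\|_{L^1}^3$, while a direct Gaussian-density estimate yields $\E_B[h(w+E(s))] \lesssim \min(1,\,\sigma^{-d}(s))$ uniformly in the shift $w$.

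What remains is to bound $\int_{[0,T]^4}\min(1, \sigma^{-d}(s))\,ds$. The zero set of $\sigma$ is the $1$-dimensional diagonal $\{s_1=s_2=s_3=s_4\}$; parameterizing $s_i = \bar s + \eta_i$ with $\sum_i\eta_i = 0$ (so $\eta\in\R^3$), one checks that $\sigma^2(s)\sim|\eta|$ near the diagonal and therefore $\sigma^{-d}\sim|\eta|^{-d/2}$. The integral of $|\eta|^{-d/2}$ over $\{1\le|\eta|\le T\}\subset \R^3$ is $\lesssim T^{3-d/2}$ for $d<6$, $\log T$ for $d=6$, and $O(1)$ for $d>6$, while the region $|\eta|\le 1$ contributes $O(1)$ via the $\min(\cdot,1)$ cap. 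Combined with the $\bar s$-integration on $[0,T]$ this gives $\int ds\,\min(1,\sigma^{-d}(s))\lesssim T^{\max(4-d/2,\,1)}$, and hence the whole quantity is $\lesssim \eps^{3d}\,T^{\max(4-d/2,\,1)} = \eps^{\min(4d-8,\,3d-2)}$, which is $\le\eps^{3(d-2)}$ for every $d\ge 3$ since both $4d-8\ge 3d-6$ and $3d-2\ge 3d-6$. The main technical obstacle is this final step, where one must correctly identify the diagonal as the locus of degeneracy of $\sigma(s)$ and balance the transverse singularity $|\eta|^{-d/2}$ against the $3$-dimensional transverse volume while handling the $\min$-cap in a dimension-dependent way.
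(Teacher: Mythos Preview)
Your argument is correct and takes a genuinely different route from the paper's. The paper works in Fourier space: it rewrites the cycle product $\prod_{(i,k)\in\tilde{\mathcal{O}}}$ as $\prod_j h\big((x_j-x_{j-1})/\eps + B^j_{s_j}-B^{j-1}_{s_j}\big)$, Fourier-transforms in $x$, and makes a triangular change of Fourier variables $\eta_j\mapsto\xi_j$ that extracts the same $\eps^{3d}$ factor. After sorting the $s_j$'s into six permutation cases it computes the Brownian characteristic functions explicitly, obtaining an integrand of the form $e^{-\frac12\sum_k\lambda_k(s_k-s_{k-1})}$ with quadratic forms $\lambda_k=\lambda_k(\xi)$; integrating in time gives $\les 1/(\lambda_1\lambda_2\lambda_3\lambda_4)\les\eps^{-6}/(|\xi_2|^2|\xi_3|^2|\xi_4|^2|\xi_1+\eps(\xi_2+\xi_3+\xi_4)|^2)$, and a final Fourier integral against $\hat f,\hat h$ is bounded by a constant, yielding $\eps^{3d-6}$ in one stroke.

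Your real-space approach is more elementary and arguably more transparent: the $4$-cycle identity appears directly through the spatial change of variables, and the entire Brownian computation collapses to a single scalar Gaussian $E(s)\sim N(0,\sigma^2(s)I_d)$ whose variance degenerates exactly on the time diagonal. The price is the dimension-dependent case split in $\int_{\R^3}\min(1,|\eta|^{-d/2})\,d\eta$ (and you should note the extra $\log(1/\eps)$ at $d=6$, which is harmless for the stated conclusion). In return you avoid both Fourier analysis and the permutation casework, and in fact obtain the sharper exponents $\eps^{4d-8}$ for $3\le d\le 5$ and $\eps^{3d-2}$ for $d\ge 7$.
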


\begin{proof}
Without loss of generality, assume $h$ is even. To simplify the notation, we write 
\[
\begin{aligned}
\prod_{(i,k)\in \tilde{\mathcal{O}}} \int_0^{t/\eps^2} h(\frac{x_i-x_k}{\eps}+B^i_s-B^k_s)ds=\int_{[0,t/\eps^2]^4}\prod_{j=1}^4 h(\frac{x_j-x_{j-1}}{\eps}+B^j_{s_j}-B^{j-1}_{s_j})ds\\
=\sum_{\tau}\int_{A_\tau}\prod_{j=1}^4 h(\frac{x_j-x_{j-1}}{\eps}+B^j_{s_j}-B^{j-1}_{s_j})ds,
\end{aligned}
\]
where we let $x_0=x_4,B^0=B^4$, $\tau$ denotes the permutations of $s_1,\ldots,s_4$, and $A_\tau\subset [0,t/\eps^2]^4$ corresponds to the permutation $\tau$. Due to symmetry, there are six different permutations to consider. 

Now we write the integral in the Fourier domain. Denote $\hat{f}(\xi)=\int f(x)e^{-i\xi\cdot x}dx$ as the Fourier transform of $f$, we have
\[
\begin{aligned}
\int_{\R^{4d}} \prod_{j=1}^4 f(x_j)h(\frac{x_j-x_{j-1}}{\eps}+B^j_{s_j}-B^{j-1}_{s_j}) dx&=\frac{1}{(2\pi)^{4d}}\int_{\R^{8d}} \prod_{j=1}^4 f(x_j)\hat{h}(\eta_j) e^{i\eta_j\cdot (x_j-x_{j-1})/\eps}e^{i\eta_j \cdot(B^j_{s_j}-B^{j-1}_{s_j})} d\eta dx\\
&=\frac{1}{(2\pi)^{4d}} \int_{\R^{4d}} \prod_{j=1}^4 \hat{f}(\frac{\eta_{j+1}-\eta_{j}}{\eps}) \hat{h}(\eta_j)  e^{i(\eta_j\cdot B^j_{s_j}-\eta_{j+1}\cdot B^j_{s_{j+1}})}d\eta,
\end{aligned}
\]
with $\eta_5=\eta_1, s_5=s_1$. Thus, it suffices to estimate 
\[
\int_{A_\tau}\int_{\R^{4d}}\prod_{j=1}^4 \hat{f}(\frac{\eta_{j+1}-\eta_{j}}{\eps}) \hat{h}(\eta_j)  \E_B[e^{i(\eta_j\cdot B_{s_j}-\eta_{j+1}\cdot B_{s_{j+1}})}]d\eta.
\]
First, we change variables 
\[
\eta_1= \xi_1,\    \  \eta_j=\xi_1+\eps(\xi_2+\ldots+\xi_j),  \  \ j=2,3,4,
\]
and the above integral equals to 
\[
\eps^{3d}\int_{A_\tau} \int_{\R^{4d}} \hat{h}(\xi_1)\hat{f}(\xi_2)\hat{f}(\xi_3)\hat{f}(\xi_4) w_\eps(\xi) \prod_{j=1}^4  \E_B[e^{i(\eta_j\cdot B_{s_j}-\eta_{j+1}\cdot B_{s_{j+1}})}] d\xi
\]
with 
\[
w_\eps(\xi)=\hat{f}(-\xi_2-\xi_3-\xi_4)\hat{h}(\eta_2)\hat{h}(\eta_3)\hat{h}(\eta_4)\in L^\infty(\R^{4d}).
\]
Depending on the permutation $\tau$, the factor $\prod_{j=1}^4  \E_B[e^{i(\eta_j\cdot B_{s_j}-\eta_{j+1}\cdot B_{s_{j+1}})}]$ can be computed explicitly. Since all six cases are treated in the same way, we only take $s_1<s_2<s_3<s_4$ as an example:
\[
\begin{aligned}
\prod_{j=1}^4  \E_B[e^{i(\eta_j\cdot B_{s_j}-\eta_{j+1}\cdot B_{s_{j+1}})}]&=e^{-\frac12|\eta_4-\eta_1|^2s_1}e^{-\frac12|\eta_4|^2(s_4-s_1)}\prod_{j=1}^3 e^{-\frac12|\eta_j-\eta_{j+1}|^2s_j} e^{-\frac12|\eta_{j+1}|^2 (s_{j+1}-s_j)}\\
&=e^{-\frac12\sum_{j=1}^4 \lambda_j(s_j-s_{j-1})}
\end{aligned}
\]
with $s_0=0$ and 
\[
\begin{aligned}
&\lambda_1=\eps^2(|\xi_2|^2+|\xi_3|^2+|\xi_4|^2+|\xi_2+\xi_3+\xi_4|^2),\\
 &\lambda_2=\eps^2|\xi_3|^2+\eps^2|\xi_4|^2+|\xi_1+\eps\xi_2|^2+|\xi_1+\eps(\xi_2+\xi_3+\xi_4)|^2,\\
&\lambda_3=\eps^2|\xi_4|^2+|\xi_1+\eps \xi_2+\eps \xi_3|^2+|\xi_1+\eps(\xi_2+\xi_3+\xi_4)|^2,\\
 &\lambda_4=2|\xi_1+\eps(\xi_2+\xi_3+\xi_4)|^2.
\end{aligned}
\]
After integrating the $s$ variables, we have 
\[
\int_{0<s_1<\ldots<s_4<t/\eps^2}\prod_{j=1}^4  \E_B[e^{i(\eta_j\cdot B_{s_j}-\eta_{j+1}\cdot B_{s_{j+1}})}]ds \les  \frac{1}{\lambda_1\lambda_2\lambda_3\lambda_4} \les  \frac{\eps^{-6}}{|\xi_2|^2|\xi_3|^2|\xi_4|^2|\xi_1+\eps(\xi_2+\xi_3+\xi_4)|^2}
\]
In the end, we note that 
\[
\int_{\R^{4d}}\frac{|\hat{h}(\xi_1)\hat{f}(\xi_2)\hat{f}(\xi_3)\hat{f}(\xi_4)|}{|\xi_2|^2|\xi_3|^2|\xi_4|^2|\xi_1+\eps(\xi_2+\xi_3+\xi_4)|^2}d\xi\les1
\]
to complete the proof.
\end{proof}

\subsection{Proof of Proposition~\ref{p.gauss}}
Recall that 
\[
Y_\eps=\frac{X_\eps-\E[X_\eps]}{\sqrt{\Var[X_\eps]}}.
\]
Since 
\[
\begin{aligned}
d_{\mathrm{TV}}(Y_\eps,\zeta) &\les \E[\|DY_\eps\|_H^4]^{1/4} \E[\|D^2Y_\eps\|_{\mathrm{op}}^4]^{1/4}\\
&=\frac{1}{\Var[X_\eps]}\E[\|DX_\eps\|_H^4]^{1/4} \E[\|D^2X_\eps\|_{\mathrm{op}}^4]^{1/4},
\end{aligned}
\]
using the fact that $\Var[X_\eps]\sim \eps^{d-2}$ and  applying Lemmas~\ref{l.1stde}, \ref{l.p1} and \ref{l.p2}, we have 
\[
d_{\mathrm{TV}}(Y_\eps,\zeta) \les \eps^{2-d} \eps^{\frac{d-2}{2}-\delta}\left(\eps^{4d-8-\delta}\1_{d<8} +\eps^{3d-8-\delta}\1_{d\geq 8}+\eps^{3(d-2)-\delta}\right)^{\frac14}.
\]
By choosing $\delta$ small, the r.h.s. goes to zero as $\eps\to0$.

\section{Proof of Theorem~\ref{t.generalf}}
The proof of Theorem~\ref{t.mainth} applies almost verbatim with the logarithm $\log y$ replaced by a
general function $\mathfrak{f}(y)$. 
We need to use the assumption 
\[
|\mathfrak{f}(y)|+|\mathfrak{f}'(y)|+|\mathfrak{f}''(y)| \leq M(y^p+y^{-p})
\]
to guarantee that 
\[
\E[|\mathfrak{g}(u_\eps(t,x))|^n] \les1
\]
for $\mathfrak{g}\in\{\mathfrak{f},\mathfrak{f}',\mathfrak{f}''\}$,
provided that $\beta$ is chosen small. The only changes needed are in Section~\ref{s.analysisi3}, and we sketch them here. 

First, we have for general $\mathfrak{f}$ that 
\[
I_{3,\eps}=\int_{K}^{t/\eps^2} \int_{\R^d}\Big(\int_{\R^d} g(x)\mathfrak{f}'(Z(K,x/\eps))
\E\Big[ \E_B[ M_\eps(t,x)\Phi^\eps_{t,x,B}(s,y)]\big| \F_s\Big]dx \Big) dW(s,y).
\]
The following two lemmas come from the same proof of Lemmas~\ref{l.i3re} and \ref{l.i3bd}.
\begin{lemma}\label{l.i3renew}
\[
\eps^{-(d-2)}\E[I_{3,\eps}^2]= \int_{0}^{t-\eps^2K} \G_\eps(s) ds,
\]
with 
\begin{equation}
\begin{aligned}
\G_\eps(s)=\int_{\R^{3d}}&g(x-w)g(x) R(y)  \E\Big[\mathfrak{f}'(Z(K,-\frac{w}{\eps}))\mathfrak{f}'(Z(K,0))\\
&\times \E_B[M_1(K,-\frac{w}{\eps})M_2(K,0)G_{2s}(w+\eps y-\eps \X_K)\cH(\frac{s}{\eps^2},y,\X_K-\frac{w}{\eps}-y)]\Big]dxdydw.
\end{aligned}
\end{equation}
\end{lemma}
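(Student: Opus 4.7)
The plan is to repeat the proof of Lemma~\ref{l.i3re} essentially verbatim, tracking the replacement of every occurrence of the factor $1/Z(K,x_j/\eps)$ (which in the log-case came from applying Lemma~\ref{l.delog} to $\log Z$) by the $\F_K$-measurable factor $\mathfrak{f}'(Z(K,x_j/\eps))$. Because $Z(K,x_j/\eps)$ is $\F_K$-measurable and $s\geq K$ in the integral defining $I_{3,\eps}$, these prefactors pass freely through the conditional expectation $\E[\cdot\,|\,\F_s]$ exactly as their reciprocal counterparts did, and in particular can be pulled out of the $\E_B$-expectation.

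The order of operations is as follows. First, apply It\^o's isometry to $I_{3,\eps}^2$ to obtain
\[
\E[I_{3,\eps}^2]=\int_K^{t/\eps^2}\!\!\int_{\R^{3d}}\!\! g(x_1)g(x_2)\E\Big[\prod_{j=1}^2\mathfrak{f}'(Z(K,x_j/\eps))\,\E_B[\prod_{j=1}^2 M_j(s,x_j/\eps)\Phi^\eps_{t,x_j,B^j}(s,y)]\Big]dy\,dx_1dx_2\,ds.
\]
Next, condition on $\F_K$ to compute $\E[\prod_j M_j(s,x_j/\eps)\,|\,\F_K] = (\prod_j M_j(K,x_j/\eps)) \exp\{\beta^2\int_K^s R(\tfrac{x_1-x_2}\eps + B^1_r - B^2_r)\,dr\}$, and perform the $y$-integration using the identity $\int \Phi^\eps_{t,x_1,B^1}(s,y)\Phi^\eps_{t,x_2,B^2}(s,y)\,dy = R(\tfrac{x_1-x_2}\eps + B^1_s - B^2_s)$. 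This produces the analog of \eqref{e.var1} with $\prod_j\mathfrak{f}'(Z(K,x_j/\eps))$ inserted in front.

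Then change variables $x_2\mapsto x$, $x_1\mapsto x+\eps y$, $s\mapsto s/\eps^2$, and use the spatial stationarity of $V$ to shift the argument of every $Z$, $M$, and $B$-expression by $-x/\eps$; this reduces the two $\mathfrak{f}'$ factors to $\mathfrak{f}'(Z(K,y))\mathfrak{f}'(Z(K,0))$ and the martingale factors to $M_1(K,y)M_2(K,0)$, while the $x$-integral is absorbed into $\int g(x+\eps y)g(x)\,dx$. Now decompose $B^j_r - B^j_K = \bar B^j_{r-K}$ and introduce $\X_K = B^1_K - B^2_K$; by the independence of $\bar B$ and $\F_K$ together with the time-reversal of the Brownian bridge (exactly the step used in the proof of Lemma~\ref{l.i3re}), the conditioning on $\bar B_{2(s/\eps^2-K)}=w/\eps$ yields the factor $\cH(s/\eps^2-K,\ldots,\ldots)$. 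Finally, the substitutions $w\mapsto w+\eps y$, $y\mapsto y-\X_K$, and $s\mapsto s+\eps^2K$ deliver the stated expression for $\G_\eps(s)$.

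There is no real obstacle; this is a symbolic repetition of the computation already done. The only point worth checking is that all the exchanges of expectation, integrals, and conditional expectations remain justified when $1/Z$ is replaced by $\mathfrak{f}'(Z)$. This is handled by the hypothesis $|\mathfrak{f}'(y)|\leq M(y^p+y^{-p})$ combined with Proposition~\ref{p.negative} (negative moments) and the standard positive moment bounds on $Z$ in the $L^2$-region, which together guarantee $\E[|\mathfrak{f}'(Z(K,\cdot))|^n]\lesssim 1$ uniformly in $K$ for $\beta$ sufficiently small, so that Fubini and dominated convergence apply throughout just as before.
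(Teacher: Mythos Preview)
Your proposal is correct and follows exactly the approach the paper intends: the paper itself states that Lemma~\ref{l.i3renew} ``comes from the same proof'' as Lemma~\ref{l.i3re}, and your write-up is precisely that proof with the factor $1/Z(K,x_j/\eps)$ replaced by the $\F_K$-measurable factor $\mathfrak{f}'(Z(K,x_j/\eps))$. The only additional care you flag---that the moment bounds on $\mathfrak{f}'(Z)$ follow from the growth hypothesis on $\mathfrak{f}'$ together with Proposition~\ref{p.negative} and the positive moment bounds---is exactly the justification the paper alludes to in Section~5.
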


\begin{lemma}\label{l.i3bdnew}
There exists $\beta_0>0$ so that there exists $\gamma\in (0,1)$ such that,  for all  $\beta<\beta_0$,
$\G_\eps(s)\les s^{-\gamma}$ for $s\in(0,t)$.
\end{lemma}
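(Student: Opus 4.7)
The plan is to follow the proof of Lemma~\ref{l.i3bd} closely, with the main new ingredient being a way to peel off the two additional $\mathfrak{f}'(Z(K,\cdot))$ factors before applying the tools available there. First, I would use Lemma~\ref{l.exmm} to get $|\cH(\tfrac{s}{\eps^2},y,\X_K-\tfrac{w}{\eps}-y)|\les 1$ (uniformly in $\eps$ and the arguments, for small $\beta$), reducing the task to bounding
\[
\int_{\R^{3d}}|g(x-w)g(x)|\,R(y)\,\E\Big[|\mathfrak{f}'(Z(K,-\tfrac{w}{\eps}))\mathfrak{f}'(Z(K,0))|\,\E_B[M_1(K,-\tfrac{w}{\eps})M_2(K,0)G_{2s}(w+\eps y-\eps\X_K)]\Big] dx\,dy\,dw.
\]

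To handle the inner expectation, I would use the fact that $B^1$ and $B^2$ are independent, which gives $\E_B[M_1(K,-w/\eps)M_2(K,0)]=Z(K,-w/\eps)Z(K,0)$; thus the ratio $\E_B[M_1 M_2 F]/(Z_1 Z_2)$ is a genuine expectation under the product polymer measure. Multiplying and dividing by $Z_1 Z_2$ and applying Cauchy--Schwarz in the outer expectation yields
\[
\E\big[|\mathfrak{f}'(Z_1)\mathfrak{f}'(Z_2)|\,\E_B[M_1 M_2 G_{2s}]\big] \leq \E\big[(\mathfrak{f}'(Z_1)Z_1)^2(\mathfrak{f}'(Z_2)Z_2)^2\big]^{1/2}\cdot \E\Big[\big(\tfrac{\E_B[M_1 M_2 G_{2s}]}{Z_1 Z_2}\big)^2\Big]^{1/2}.
\]
The first factor is $O(1)$: the growth assumption gives $|\mathfrak{f}'(y)y|\les y^{p+1}+y^{1-p}$, so bounded positive moments of $Z(K,\cdot)$ (from Lemma~\ref{l.exmm} with a single copy) combined with Proposition~\ref{p.negative} control it. For the second factor, Jensen's inequality for the polymer probability measure gives $\big(\tfrac{\E_B[M_1 M_2 G_{2s}]}{Z_1 Z_2}\big)^2\leq \tfrac{\E_B[M_1 M_2 G_{2s}^2]}{Z_1 Z_2}$, and then Lemma~\ref{l.holder} (applied with $n=2$) bounds this by $\E_B[G_{2s}(w+\eps y-\eps\X_K)^{2q}]^{1/q}$ for some $q>1$.

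From this point the argument runs identically to the proof of Lemma~\ref{l.i3bd}: since $\X_K\sim N(0,2K)$, the Gaussian integral is explicit,
\[
\E_B[G_{2s}(w+\eps y-\eps\X_K)^{2q}]^{1/(2q)} \les s^{-d(2q-1)/(4q)}\,G_{s/q+2\eps^2K}(w+\eps y)^{1/(2q)},
\]
after which the same calculation as in \eqref{dec302} (integrating out $x$, using $R$ compactly supported, and $2s/q+2\eps^2K\les 1$) yields $\G_\eps(s)\les s^{-d/(2p)}$ for the dual exponent $p$. Choosing $p>d/2$ large (and shrinking $\beta$ accordingly so Lemma~\ref{l.holder} still applies) produces $\gamma\in(0,1)$ with $\G_\eps(s)\les s^{-\gamma}$. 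The only genuinely new step compared to Lemma~\ref{l.i3bd} is the Cauchy--Schwarz/Jensen maneuver to disentangle $\mathfrak{f}'(Z_1)\mathfrak{f}'(Z_2)$ from $\E_B[M_1M_2 G_{2s}]$ while preserving the quotient structure $\E_B[\cdots]/Z_1 Z_2$ needed for Lemma~\ref{l.holder}; the growth bound on $\mathfrak{f}'$ combined with bounded negative moments of $Z$ makes that step essentially free, so I do not anticipate any real obstacle beyond bookkeeping of exponents.
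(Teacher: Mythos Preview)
Your strategy is the right one, but the Cauchy--Schwarz/Jensen combination you use to separate the $\mathfrak{f}'(Z_1)\mathfrak{f}'(Z_2)$ factors doubles the Gaussian exponent, and this makes the final step fail in dimensions $d\geq 4$. Concretely, after Cauchy--Schwarz, Jensen, and Lemma~\ref{l.holder} you land on $\E_B[G_{2s}^{\,2q}]^{1/(2q)}$, and the computation of \eqref{dec302} then produces $\G_\eps(s)\les s^{-\gamma}$ with $\gamma=d(2q-1)/(4q)$. Since $q>1$, the exponent dual to $2q$ is $p'=2q/(2q-1)<2$, so $\gamma=d/(2p')>d/4$; for $d\geq 4$ this forces $\gamma\geq 1$ and you cannot land in $(0,1)$. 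Your sentence ``choosing $p>d/2$ large'' conflates the dual of $q$ with the dual of $2q$.

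The repair is minor. One option is to replace Cauchy--Schwarz by H\"older with a large exponent $r$ on the $\mathfrak{f}'(Z_j)Z_j$ factors and $r'=r/(r-1)$ close to $1$ on the polymer quotient; after Jensen and Lemma~\ref{l.holder} the effective Gaussian exponent becomes $r'q$, which can be taken arbitrarily close to $1$. Cleaner still, and closer to what the paper means by ``the same proof as Lemma~\ref{l.i3bd}'', is to rerun the proof of Lemma~\ref{l.holder} itself with $\mathfrak{f}'(Z_j)$ in place of $Z_j^{-1}$: the only role of $Z_j^{-1}$ there is through the Cauchy--Schwarz bound $\E[\prod Z_j^{-2}]\les 1$ from Proposition~\ref{p.negative}, and replacing this by $\E[\prod|\mathfrak{f}'(Z_j)|^2]\les 1$ (which follows from the growth hypothesis on $\mathfrak{f}'$ together with positive and negative moment bounds on $Z$) gives directly
\[
\E\big[\mathfrak{f}'(Z_1)\mathfrak{f}'(Z_2)\,\E_B[M_1M_2\,G_{2s}]\big]\ \les\ \E_B[G_{2s}^{\,q}]^{1/q},
\]
with no doubling of exponents. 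The remainder of the proof of Lemma~\ref{l.i3bd} then applies verbatim.
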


It remains to show the following lemma to complete the proof of Theorem~\ref{t.generalf}.
\begin{lemma}\label{l.i3limnew}
For any $s\in(0,t)$, 
\[
\G_\eps(s)\to \nu_{\mathrm{eff}}^2 \sigma_{\mathfrak{f}}^2\int_{\R^{2d}} g(x-w)g(x)G_{2s}(w)dwdx,
\]
 as $\eps\to0$.
\end{lemma}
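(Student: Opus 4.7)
The plan is to follow the three-step structure of the proof of Lemma~\ref{l.i3lim}, carrying the new factors $\mathfrak{f}'(Z(K,-w/\eps))\mathfrak{f}'(Z(K,0))$ throughout. Setting
\[
F_\eps := \mathfrak{f}'(Z(K,-w/\eps))Z(K,-w/\eps)\cdot\mathfrak{f}'(Z(K,0))Z(K,0),
\]
and using that $F_\eps$ is independent of $B^1,B^2$ together with the identity $\E_B[M_1(K,-w/\eps)M_2(K,0)]=Z(K,-w/\eps)Z(K,0)$, one can rewrite
\[
\G_\eps(s)=\int_{\R^{3d}} g(x-w)g(x)R(y)\,\hat\E_{-w/\eps,0}\bigl[F_\eps\cdot G_{2s}(w+\eps y-\eps\X_K)\,\cH(\tfrac{s}{\eps^2},y,\X_K-\tfrac{w}{\eps}-y)\bigr]\,dx\,dy\,dw.
\]
The growth hypothesis on $\mathfrak{f}'$ together with Lemma~\ref{l.exmm} and Proposition~\ref{p.negative} ensures $\sup_\eps\E[|F_\eps|^n]<\infty$ for every $n\ge 1$ (for $\beta$ small), so in particular $\hat\E_{-w/\eps,0}[F_\eps^2]=\E[F_\eps^2]\lesssim 1$.

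Next I would rerun steps (i)--(iii) of the proof of Lemma~\ref{l.i3lim} verbatim, inserting $F_\eps$ inside every $\hat\E_{-w/\eps,0}[\cdot]$. Each error estimate of the form $\hat\E_{-w/\eps,0}[E_\eps]\to 0$ becomes $\hat\E_{-w/\eps,0}[F_\eps\cdot E_\eps]\to 0$, which follows from the original estimate via Cauchy--Schwarz and the uniform $L^2$-bound on $F_\eps$. This yields, for each fixed $y,w$ with $w\neq 0$,
\[
\hat\E_{-w/\eps,0}[F_\eps\cdot G\cdot\cH]-G_{2s}(w)\,\E_{\bar B}\bigl[e^{\beta^2\int_0^\infty R(y+\bar B_{2r})dr}\bigr]\cdot\hat\E_{-w/\eps,0}[F_\eps]\longrightarrow 0,
\]
so that the uniform bound of Lemma~\ref{l.i3bdnew} plus dominated convergence in $x,y,w$ reduces the proof to computing $\lim_{\eps\to 0}\E[F_\eps]$.

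The main obstacle is this last scalar limit. Because $K=\eps^{-\alpha}$ with $\alpha<2$, the separation $|w|/\eps$ between $-w/\eps$ and $0$ is much larger than the Feynman--Kac correlation length $\sqrt{K}=\eps^{-\alpha/2}$. I would exploit this by introducing the Brownian-truncated partition function
\[
Z^L(K,x):=\E_B\Bigl[\exp\Bigl(\beta\int_0^K V(s,x+B_s)ds-\tfrac12\beta^2 R(0)K\Bigr)\,\1_{\max_{s\le K}|B_s|\le L}\Bigr], \qquad L=\sqrt{K}\log(1/\eps),
\]
which is measurable with respect to $\dot{W}$ restricted to $[0,K]\times B(x,L+1)$; once $\eps$ is small enough that $2(L+1)<|w|/\eps$, the truncations at $-w/\eps$ and $0$ depend on disjoint regions of the noise, hence are independent. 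Gaussian tail estimates for the Brownian exit time together with the moment bounds of Proposition~\ref{p.negative} and Lemma~\ref{l.exmm} give $Z(K,x)-Z^L(K,x)\to 0$ in every $L^n$ uniformly in $x$; propagating this through the continuous map $y\mapsto\mathfrak{f}'(y)y$ (using the growth hypothesis and the uniform integrability of positive and negative moments of $Z(K,x)$) shows that replacing each $Z(K,\cdot)$ by its truncated counterpart inside $F_\eps$ produces only a negligible error. Combined with the $L^n$-convergence $Z(K,0)\to Z_\infty$ obtained from \eqref{e.zinfinity} via uniform integrability, this yields
\[
\E[F_\eps]\longrightarrow \E[\mathfrak{f}'(Z_\infty)Z_\infty]^2=\sigma_{\mathfrak{f}}^2.
\]
Plugging back and using $\int R(y)\E_{\bar B}[e^{\beta^2\int_0^\infty R(y+\bar B_{2r})dr}]\,dy=\nu_{\mathrm{eff}}^2$ via the substitution $r\mapsto r/2$ (as at the end of the proof of Lemma~\ref{l.i3lim}) delivers the claimed limit.
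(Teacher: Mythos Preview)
Your reduction of $\G_\eps(s)$ to the scalar quantity $\E[F_\eps]$ matches the paper's first step (the paper also just invokes ``as in the proof of Lemma~\ref{l.i3lim}''), and inserting $F_\eps$ into the three $\hat\E_{-w/\eps,0}$-estimates is sound once you trade Cauchy--Schwarz for H\"older with $F_\eps\in L^p$ for large $p$: the original error terms $E_\eps$ are bounded (by $\cH$ and $G_{2s}$) and shown to vanish only in $\hat\E$-$L^1$, so $\hat\E[|F_\eps E_\eps|]\le\hat\E[|F_\eps|^p]^{1/p}\hat\E[|E_\eps|^q]^{1/q}$ with $q$ close to $1$ does the job.

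Where you diverge from the paper is in proving $\E[F_\eps]\to\sigma_{\mathfrak f}^2$. The paper writes $\E[F_\eps]=\mathrm{Cov}[\zeta(K,-w/\eps),\zeta(K,0)]+\E[\zeta(K,0)]^2$ with $\zeta=\mathfrak f'(Z)Z$, and kills the covariance by a Malliavin argument (Lemma~\ref{l.covdecay}): the covariance inequality together with the chain rule $D\zeta=[\mathfrak f''(Z)Z+\mathfrak f'(Z)]\,DZ$ reduces everything to moments of $Z^{\pm1}$, which are available from Proposition~\ref{p.negative}. Your Brownian-truncation route is a natural alternative, but the step ``propagate $Z-Z^L\to 0$ in every $L^n$ through $y\mapsto\mathfrak f'(y)y$'' has a genuine gap when the growth exponent satisfies $p>1$: this map blows up like $y^{1-p}$ at the origin, so controlling $\mathfrak f'(Z^L)Z^L$ requires \emph{negative} moments of $Z^L$, and Proposition~\ref{p.negative} gives them only for $Z$. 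On the (super-polynomially small) event $\{Z^L<Z/2\}$ you have no a~priori bound on $(Z^L)^{-(p-1)}$, and the obvious tail integral $\int_0^1 r^{-n-1}\Pb[Z-Z^L>r]\,dr$ diverges for each fixed $\eps$. The hole can be filled---for instance by rerunning the small-ball argument of Appendix~\ref{s.nemm} for $Z^L$---but that is substantial additional work, not the one-line ``uniform integrability'' you invoke. The paper's Malliavin route sidesteps the issue entirely because $\mathfrak f'$ is only ever evaluated at $Z$ itself.
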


\begin{proof}
As in the proof of Lemma~\ref{l.i3lim}, we have
\[
\G_\eps(s)-\nu_{\mathrm{eff}}^2\int_{\R^{2d}}g(x-w)g(x)G_{2s}(w)\E[\mathfrak{f}'(Z(K,-\frac{w}{\eps}))\mathfrak{f}'(Z(K,0))\E_B[M_1(K,-\frac{w}{\eps})M_2(K,0)]]\to0,
\]
so it remains to analyze 
\[
\begin{aligned}
&\E[\mathfrak{f}'(Z(K,-\frac{w}{\eps}))\mathfrak{f}'(Z(K,0))\E_B[M_1(K,-\frac{w}{\eps})M_2(K,0)]]\\
&=\E[\mathfrak{f}'(Z(K,-\frac{w}{\eps}))\mathfrak{f}'(Z(K,0))Z(K,-\frac{w}{\eps})Z(K,0)]=\E[\zeta(K,-\frac{w}{\eps})\zeta(K,0)],
\end{aligned}
\]
where we defined \[
\zeta(t,x):=\mathfrak{f}'(Z(t,x))Z(t,x).
\] By stationarity in the $x-$variable, we write 
\[
\E[\zeta(K,-\frac{w}{\eps})\zeta(K,0)]=\mathrm{Cov}[\zeta(K,-\frac{w}{\eps}),\zeta(K,0)]+\E[\zeta(K,0)]^2.
\]
By \eqref{e.zinfinity}, we have $Z(t,0)\to Z_\infty$ almost surely, thus, as $K\to\infty$, we have
\[
\E[\zeta(K,0)]\to \sigma_{\mathfrak{f}}=\E[\mathfrak{f}'(Z_\infty)Z_\infty].
\] 
The following lemma completes the proof.
\begin{lemma}\label{l.covdecay}
For any $\delta>0$, if $\beta<\beta(\delta)$, we have for all $x\neq0$ that 
\[
\sup_{t\in[0,\eps^{-2}]}\mathrm{Cov}[\zeta(t,x),\zeta(t,0)]\les  \frac{\eps^{-\delta}}{|x|^{d-2}}.
\]
\end{lemma}

\begin{proof}
By the covariance inequality, we have 
\[
|\mathrm{Cov}[\zeta(t,x),\zeta(t,0)]|\leq  \int_{\R^{d+1}} \sqrt{\E[|D_{r,y}\zeta(t,x)|^2]\E[|D_{r,y}\zeta(t,0)|^2]} dydr.
  \]
Since we have
\begin{equation}\label{dec906}
D_{r,y}\zeta=[\mathfrak{f}''(Z)Z+\mathfrak{f}'(Z)] D_{r,y}Z,
\end{equation}
the Cauchy-Schwarz inequality yields
  \[
  |\mathrm{Cov}[\zeta(t,x),\zeta(t,0)]|\leq \int_{\R^{d+1}} \E[|D_{r,y}Z(t,x)|^4]^{1/4}\E[|D_{r,y}Z(t,0)|^4]^{1/4}dydr.
  \]
Recalling that 
\[
D_{r,y}Z(t,x)=\beta\E_B[ M(t,x)  \1_{[0,t]}(r)\varphi(x+B_r-y)],
\]
by Lemmas~\ref{l.holder} and \ref{l.exmm} we obtain
\[
\E[|D_{r,y}Z(t,x)|^4]^{1/4} \les \1_{[0,t]}(r)\E_B[\varphi^q(x+B_r-y)]^{1/q},
\]
for any $q>1$, which implies 
\begin{equation}\label{e.1251}
|\mathrm{Cov}[\zeta(t,x),\zeta(t,0)]|\les\int_0^t \int_{\R^d}\E_B[\varphi^q(x+B_r-y)]^{1/q}\E_B[\varphi^q(B_r-y)]^{1/q}dydr.
\end{equation}
As $\varphi\in \C_c(\R^d)$, we use the simple bound 
\[
\E_B[ \varphi^q(z+B_r)]^{1/q} \les \Pb[|z+B_r|\leq 1]^{1/q}\les \left(\1_{|z|\leq C}+r^{-d/2}e^{-\frac{|z|^2}{Cr}}\1_{|z|>C}\right)^{1/q}\les \1_{|z|\leq C}+t^{\frac{d}{2p}}r^{-d/2}e^{-\frac{|z|^2}{qCr}}\1_{|z|>C},
\]
for all $z\in\R^d$ and $r\in(0,t)$, where $C$ is some positive constant. The r.h.s. of \eqref{e.1251} is then bounded by
\[
\int_0^t\int_{\R^d}\left(\1_{|x-y|\leq C}+t^{\frac{d}{2p}}r^{-d/2}e^{-\frac{|x-y|^2}{qCr}}\1_{|x-y|>C}\right)\left(\1_{|y|\leq C}+t^{\frac{d}{2p}}r^{-d/2}e^{-\frac{|y|^2}{qCr}}\1_{|y|>C}\right)dydr.
\]
For  $|x|\gg1$, integrating in $r$ in the above expression to derive
\[
|\mathrm{Cov}[\zeta(t,x),\zeta(t,0)]|\les  \frac{t^{\frac{d}{p}}+t^{\frac{d}{2p}}}{|x|^{d-2}}.
\]
It suffices to pick $p\gg1$ to complete the proof.
\end{proof}

This also finishes the proof of Theorem~\ref{t.generalf}.
\end{proof}

\begin{remark}\label{r.sigmaf}
By \cite[Theorem 1.2]{dunlap2018random}, the effective variance in \eqref{e.defU} is related to the asymptotic decorrelation rate of the stationary 
solution $\Psi(t,x)$ of the stochastic heat equation,  
as in (\ref{dec902}):
\begin{equation}\label{dec916bis}
\mathrm{Cov}[\Psi(t,x),\Psi(t,y)] \approx 
\frac{c(\beta)\nu_{\mathrm{eff}}^2}{|x-y|^{d-2}},\hbox{ for $|x-y|\gg 1$,} 
\end{equation}
with $c(\beta)=\bar c\beta^2$. 
Theorem~\ref{t.generalf} further indicates that  for any smooth function $\mathfrak f$
we also have
\begin{equation}\label{e.1283bis}
\begin{aligned}
\mathrm{Cov}[\mathfrak{f}(\Psi(t,x)),\mathfrak{f}(\Psi(t,y))]
\approx
\frac{\sigma_{\mathfrak{f}}^2c(\beta)\nu_{\mathrm{eff}}^2}{|x-y|^{d-2}}, \hbox{ for $|x-y|\gg 1$,}
\end{aligned}
\end{equation}
with $\sigma_{\mathfrak{f}}=\E[\mathfrak{f}'(\Psi(t,x))\Psi(t,x)]$,
so that
\begin{equation}\label{dec1038}
\begin{aligned}
\mathrm{Cov}[\mathfrak{f}(\Psi(t,x)),\mathfrak{f}(\Psi(t,y))]
\approx\sigma_{\mathfrak{f}}^2 \mathrm{Cov}[\Psi(t,x),\Psi(t,y)] 
\hbox{ for $|x-y|\gg 1$.}
\end{aligned}
\end{equation}
Recall also that 
\begin{equation}\label{dec1040}
\E[\Psi(t,x)]= \E[Z_\infty]=1.
\end{equation}
We 
restate these properties in terms of the stationary solution $H(t,x)=\log\Psi(t,x)$
of the KPZ equation:
\begin{equation}\label{dec1026}
\mathrm{Cov}[e^{H(t,x)},e^{H(t,y)}] \approx 
\frac{c(\beta)\nu_{\mathrm{eff}}^2}{|x-y|^{d-2}},\hbox{ for $|x-y|\gg 1$,} 
\end{equation}
and
\begin{equation}\label{dec1028}
\begin{aligned}
\mathrm{Cov}[\mathfrak{f}(e^{H(t,x)}),\mathfrak{f}(e^{H(t,y)})]
\approx
\frac{\sigma_{\mathfrak{f}}^2c(\beta)\nu_{\mathrm{eff}}^2}{|x-y|^{d-2}}
\approx \sigma_{\mathfrak{f}}^2 \mathrm{Cov}[e^{H(t,x)},e^{H(t,y)}], 
\hbox{ for $|x-y|\gg 1$,}
\end{aligned}
\end{equation}
with $\sigma_{\mathfrak{f}}=\E[\mathfrak{f}'(e^{H(t,x)})e^{H(t,x)}]$, and 
\begin{equation}\label{dec1030}
\E[e^{H(t,x)}]=  1.
\end{equation}
We can now illustrate the origin of $\sigma_{\mathfrak{f}}$ through a 
toy calculation. 
Let  $X$ and $Y$ be two jointly 
Gaussian~$N(0,1)$ variables,  with $\mathrm{Cov}[X,Y]=\delta\ll 1$, and
define $\mathcal{X}=e^{X-\frac12}, \mathcal{Y}=e^{Y-\frac12}$, so that 
\begin{equation}\label{dec920}
\E[\mathcal{X}]=\E[\mathcal{Y}]=1.
\end{equation}
We think of $\mathcal{X}$ as representing $e^{H(t,x)}$ and $\mathcal{Y}$
as representing $e^{H(t,y)}$, so as to fit \eqref{dec1030},
although we emphasize that there is no real 
claim of Gaussianity of $H(t,\cdot)$ in the pointwise sense. 
With this approximation, we may write
\[
Y=\delta X+\sqrt{1-\delta^2} W,
\]
with $W$ another~$N(0,1)$ variable independent of $X$. Then, first,
we have
\[
\mathrm{Cov}[\mathcal{X},\mathcal{Y}]=\E[\mathcal{X}\mathcal{Y}]-\E[\mathcal{X}]\E[\mathcal{Y}]=
e^{\delta}-1= \delta+o(\delta),
\]
and second,
we have 
\begin{equation}\label{dec912}
\begin{aligned}
\mathrm{Cov}[\mathfrak{f}(\mathcal{X}),\mathfrak{f}(\mathcal{Y})]=\E[\mathfrak{f}(\mathcal{X})\mathfrak{f}(\mathcal{Y})]-\E[\mathfrak{f}(\mathcal{X})]\E[\mathfrak{f}(\mathcal{Y})]
&= \delta \E[\mathfrak{f}(e^{X-\frac12})X]\E[\mathfrak{f}'(e^{W-\frac12})e^{W-\frac12}]+o(\delta)\\
&=\delta \sigma_{\mathfrak{f}}^2+o(\delta).
\end{aligned}
\end{equation}
We have denoted $\sigma_{\mathfrak{f}}=\E[ \mathfrak{f}'(\mathcal{X})\mathcal{X}]$ for smooth functions $\mathfrak{f}$,
as before. Here, we used the identity 
\begin{equation}\label{dec914}
\E[\mathfrak{f}(e^{X-\frac12})X]=\E[\mathfrak{f}'(e^{X-\frac12})e^{X-\frac12}],
\end{equation}
obtained via integration by parts. 
Thus, we have
\begin{equation}\label{e.1282}
 {\mathrm{Cov}[\mathfrak{f}(\mathcal{X}),\mathfrak{f}(\mathcal{Y})]} \approx  \sigma_{\mathfrak f}^2 \mathrm{Cov}[\mathcal{X},\mathcal{Y}].
\end{equation}
Unravelling the definitions, this parallels
\eqref{dec1028}.

As emphasized above, there was nothing Gaussian in the pointwise sense in
the field $H(t,x)$. However, the above computation 
would be essentially unchanged if we replace $H(t,x)$ by the field
%
%
\begin{equation}\label{dec1048}
\tilde H(t,x)={\mathcal G}(t,x)+E(t,x),
\end{equation}
where ${\mathcal G}(t,\cdot)$ is a mean-zero spatially  stationary
Gaussian random field with $\E[\calG(t,x)^2]=\sigma^2$, 
and correlation function
\begin{equation}\label{dec1042}
\E[{\mathcal G}(t,x){\mathcal G}(t,y)]\approx\farc{\sigma^2}{|x-y|^{d-2}},\hbox{
for $|x-y|\gg 1$},
\end{equation}
while $E(t,\cdot)$ is a spatial stationary random field of negative mean,
independent of ${\mathcal G}(t,\cdot)$,  
with a rapidly (in space) decaying correlation
function, 
such that
\begin{equation}\label{dec1032}
\E[e^{\calG(t,x)}]\E[e^{E(t,x)}]=  1.
\end{equation}
(This would be consistent with (\ref{dec1026})-(\ref{dec1028}).)
Repeating the above computation with 
$x,y\in\R^d$ such that $|x-y|\gg 1$ and 
$\delta=\mathrm{Cov}[\calG(t,x),\calG(t,y)]\ll 1$, we then obtain
again that
\begin{equation}\label{dec1046}
\mathrm{Cov}[\mathfrak{f}(e^{\tilde H(t,x)}),\mathfrak{f}(e^{\tilde H(t,y)})]  
\approx  \sigma_{\mathfrak f}^2 \mathrm{Cov}[e^{\tilde H(t,x)},e^{\tilde H(t,y)} ].
\end{equation}
Thus, any field of the form (\ref{dec1048})
satisfies
conditions~(\ref{dec1026})-(\ref{dec1030}), 
with the variance $\sigma_{\mathfrak f}$ as in
Theorem~\ref{t.generalf}. 
It is tempting to speculate that 
the stationary solution $H(t,x)$ of the KPZ equation has a decomposition 
of the form
\eqref{dec1048}; we unfortunately do not have any supporting evidence for such
a speculation,
and at this point cannot even agree whether such a decomposition is behind  the results of this paper or, alternatively, that temporal mixing
plays an additional important role.
%
\end{remark}

\appendix

\section{Auxiliary lemmas}

\begin{lemma}\label{l.delog}
For any $t>0,x\in\R^d$, we have
\[
D\log Z(t,x)=\frac{DZ(t,x)}{Z(t,x)}\in L^2(\Omega;H).
\]
\end{lemma}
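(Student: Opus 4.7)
The plan is to approximate $\log$ by smooth globally Lipschitz functions, apply the standard Malliavin chain rule to each approximation, and pass to the limit using the negative moment bound of Proposition~\ref{p.negative} to control the singularity of $\log$ at the origin. For $n\in\N$, I would set $\phi_n(y)=\log(y+1/n)$ on $y\ge 0$; then $\phi_n$ is smooth with $\|\phi_n'\|_\infty\le n$, so, granting that $Z(t,x)\in\mathbb{D}^{1,2}$ (which follows from the Feynman--Kac representation \eqref{dec312}--\eqref{dec314} together with the exponential moment bound in Lemma~\ref{l.exmm}), the standard chain rule yields $\phi_n(Z(t,x))\in\mathbb{D}^{1,2}$ with
\[
D\phi_n(Z(t,x))=\frac{DZ(t,x)}{Z(t,x)+1/n}.
\]

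The first substantive step is to show that $DZ(t,x)/Z(t,x)\in L^2(\Omega;H)$. By H\"older's inequality with $p^{-1}+q^{-1}=1$,
\[
\E\big[\|DZ(t,x)/Z(t,x)\|_H^2\big]\le \E\big[\|DZ(t,x)\|_H^{2p}\big]^{1/p}\,\E\big[Z(t,x)^{-2q}\big]^{1/q}.
\]
The second factor is finite by Proposition~\ref{p.negative}. For the first factor, the explicit formula $D_{s,y}Z(t,x)=\beta\E_B[M(t,x)\1_{[0,t]}(s)\varphi(x+B_s-y)]$ gives
\[
\|DZ(t,x)\|_H^2=\beta^2\,\E_{B^1,B^2}\big[M_1(t,x)M_2(t,x)\,\cR(t,0,B^1,B^2)\big],
\]
whose moments are bounded by the same argument as in Lemma~\ref{l.holder} (minus the $Z$-denominator), relying on Lemma~\ref{l.exmm} and on the fact that the intersection time $\cR(t,0,B^1,B^2)$ has finite moments of every order.

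Next I would pass to the limit $n\to\infty$. On one hand, $|\phi_n(Z)-\log Z|=\log(1+1/(nZ))\le 1/(nZ)$, and since $\E[Z^{-2}]<\infty$, dominated convergence gives $\phi_n(Z(t,x))\to \log Z(t,x)$ in $L^2(\Omega)$. On the other hand,
\[
\Big\|\frac{DZ(t,x)}{Z(t,x)+1/n}-\frac{DZ(t,x)}{Z(t,x)}\Big\|_H =\frac{\|DZ(t,x)\|_H}{n\,Z(t,x)(Z(t,x)+1/n)}\le \frac{\|DZ(t,x)\|_H}{n\,Z(t,x)^2},
\]
whose $L^2(\Omega)$ norm tends to $0$ by another H\"older/dominated convergence argument using the same ingredients (bounding $\E[\|DZ\|_H^{2p}]^{1/p}\E[Z^{-4q}]^{1/q}$). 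Closability of the Malliavin derivative then gives $\log Z(t,x)\in\mathbb{D}^{1,2}$ together with the stated identity.

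The only real point of care is that $\log$ is not globally Lipschitz, which forces quantitative control on small values of $Z(t,x)$; this is precisely what Proposition~\ref{p.negative} provides, and it fixes the range of $\beta$ for which the lemma holds. Once the negative moments are available, the argument is a routine instance of the Malliavin calculus technique for extending the chain rule beyond globally Lipschitz functions.
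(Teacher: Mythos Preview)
Your proof is correct and follows essentially the same approach as the paper: approximate $\log$ by smooth functions with bounded derivative, apply the Malliavin chain rule to the approximants, and pass to the limit using the closability of $D$ together with the negative-moment bound of Proposition~\ref{p.negative}. The only difference is cosmetic: the paper uses cutoffs $f_n\in\C_c^\infty$ with $f_n=\log$ on $[1/n,n]$ and $|f_n'|\le 1/|x|$, whereas you use the shifted logarithm $\phi_n(y)=\log(y+1/n)$; both choices lead to the same estimates and the same appeal to \cite[Proposition~1.2.1]{nualart2006malliavin}.
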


\begin{proof}
Recall that 
\[
\begin{aligned}
&Z(t,x)=\E_B\left[ \exp\Big\{\beta\int_0^t V(s,x+B_s)ds-\frac12\beta^2 R(0)t\Big\}\right],  \\
&\int_0^t V(s,x+B_s)ds=\int_{\R^{d+1}} \1_{[0,t]}(s) \varphi(x+B_s-y) dW(s,y),
\end{aligned}
\]
so, for each $t$ and $x$ fixed,  we have
\begin{equation}\label{nov2710}
D_{s,y}Z(t,x)=\beta\E_B\left[ \exp\Big\{
\beta\int_0^t V(s,x+B_s)ds-\frac12\beta^2 R(0)t\Big\}
\1_{[0,t]}(s) \varphi(x+B_s-y) \right] \in L^n(\Omega; H),
\end{equation}
for any $n\in\Z_+$, where we recall that
$H$ is the $L^2(\R^{d+1})$-space with respect to the $s,y$-variables.
To deal with the logarithm function,
which is singular at the origin and grows at infinity, we use an approximation~$f_n\in \C_c^\infty(\R)$ such that $f_n(x)=\log x$ for $x\in[1/n,n]$ and $|f_n'(x)|\leq |x|^{-1}$. It is clear that 
\[
D f_n(Z(t,x))=f_n'(Z(t,x)) DZ(t,x)\in L^2(\Omega; H),
\]
and the error 
\[
\begin{aligned}
\E\left[\bigg\| Df_n(Z(t,x))-\frac{DZ(t,x)}{Z(t,x)}\bigg\|_H^2\right]\les& \E\left[ \frac{ \|DZ(t,x)\|_H^2}{|Z(t,x)|^2}\big(\1_{Z(t,x)<\frac{1}{n}}+\1_{Z(t,x)>n}\big)\right]\\
\les & \sqrt{\E[|Z(t,x)|^{-4} \big(\1_{Z(t,x)<\frac{1}{n}}+\1_{Z(t,x)>n}\big)]}\to0
    \end{aligned}
    \]
   as $n\to\infty$, where we used Proposition~\ref{p.negative} in the last step,
together with (\ref{nov2710}). By \cite[Proposition 1.2.1]{nualart2006malliavin}, the proof is complete.
\end{proof}

\begin{lemma}\label{l.exmm}
There exists $\beta_0>0$ such that if $\beta<\beta_0$, we have in $d\geq3$ that  
\begin{equation}
\label{eq:BMconcl}
\sup_{x\in\R^d} \E_B\left[ \exp\Big\{\beta\int_0^\infty R(x+B_s)ds\Big\}\right]<\infty,
\end{equation}
and 
\begin{equation}
\label{eq:bridgeconcl}
\sup_{t>0, x,y\in\R^d} \E_B\left[ \exp\Big\{
\beta\int_0^t R(x+B_s)ds\Big\} \bigg| B_t=y\right]<\infty.
\end{equation}
\end{lemma}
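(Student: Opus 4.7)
My plan is to prove both \eqref{eq:BMconcl} and \eqref{eq:bridgeconcl} by Khasminskii-type estimates: expand $\exp\bigl(\beta\int R(x+B_s)\,ds\bigr)$ as a power series in $\beta$ and bound the $n$-th moment of the integral by $n!\kappa^n$ for a suitable constant $\kappa$ independent of the remaining parameters, then sum the resulting geometric series for $\beta<\kappa^{-1}$. The only role of $d\ge 3$ is to make the Newtonian potential $G(a,b)=\int_0^\infty p_s(a,b)\,ds=c_d|a-b|^{-(d-2)}$ locally integrable, so that
\[
\kappa_0:=\|R\|_\infty\sup_{a\in\R^d}\int_{\{|u|\le 1\}}\frac{c_d}{|a-u|^{d-2}}\,du<\infty;
\]
by Fubini and the compact support of $R$ this immediately gives $\sup_{z,x}\int G(z,w)R(x+w)\,dw\le \kappa_0$.

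For \eqref{eq:BMconcl} the Kac moment formula, obtained by the time-ordered expansion and the Markov property, yields
\[
\E_B\!\left[\Big(\int_0^\infty R(x+B_s)\,ds\Big)^{\!n}\right]=n!\int_{\R^{nd}}R(x+z_1)G(0,z_1)\prod_{i=2}^n R(x+z_i)G(z_{i-1},z_i)\,dz.
\]
Bounding the innermost integral over $z_n$ by $\kappa_0$ uniformly in $z_{n-1}$ and iterating inward gives $\E_B[\,\cdot\,]\le n!\kappa_0^n$, so $\E_B\bigl[\exp\bigl(\beta\int_0^\infty R(x+B_s)\,ds\bigr)\bigr]\le(1-\beta\kappa_0)^{-1}$ uniformly in $x$ for $\beta<\kappa_0^{-1}$.

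For \eqref{eq:bridgeconcl} the same time-ordered expansion applies, but now with the Brownian bridge joint density
\[
\rho^{(t,y)}(z_1,\ldots,z_n;s_1,\ldots,s_n)=\frac{p_{s_1}(0,z_1)\prod_{i=2}^n p_{s_i-s_{i-1}}(z_{i-1},z_i)\,p_{t-s_n}(z_n,y)}{p_t(0,y)}
\]
in place of the unconditioned one. The main obstacle is that the ratio $p_{t-s_n}(z_n,y)/p_t(0,y)$ is not uniformly bounded in $t,y$: conditioning destroys the time-homogeneity that made the first part clean, and the one-sided Green's-function bound is no longer available. I would overcome this by establishing the two-sided \emph{bridge Green bound}
\[
\sup_{a,c\in\R^d,\,T>0}\int_0^T\frac{p_s(a,b)\,p_{T-s}(b,c)}{p_T(a,c)}\,ds\le C\bigl(G(a,b)+G(b,c)\bigr),
\]
valid for $d\ge 3$; this follows by splitting the integral at $T/2$, writing the integrand explicitly as a Gaussian in $b$, bounding $p_{T-s}(b,c)/p_T(a,c)$ pointwise on each half using the explicit formula, and applying $\int_0^\infty u^{-d/2}e^{-|v|^2/(2u)}\,du\lesssim|v|^{-(d-2)}$. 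Using the (inhomogeneous) Markov property of the bridge to view the $n$-fold integral as iterated applications of the bridge transition kernel $\tilde p^{t,y}_{s\to u}(z_1,z_2)=p_{u-s}(z_1,z_2)p_{t-u}(z_2,y)/p_{t-s}(z_1,y)$, I would iterate the above bound one variable at a time, each step contributing at most $C(2\kappa_0)$ from $\int(G(a,z)+G(z,c))R(x+z)\,dz\le 2\kappa_0$. This gives $\E_B\bigl[(\int_0^t R(x+B_s)\,ds)^n\,|\,B_t=y\bigr]\le n!(2C\kappa_0)^n$ uniformly in $t,x,y$, and summing the exponential series yields \eqref{eq:bridgeconcl} for $\beta$ small.
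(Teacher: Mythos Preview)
Your overall strategy is the same as the paper's: both arguments are Khas'minskii/Portenko expansions, and for \eqref{eq:BMconcl} your write-up is just the standard proof of the lemma that the paper cites. For \eqref{eq:bridgeconcl} the paper also invokes Portenko's lemma, now for the (inhomogeneous) Markov bridge, which reduces the whole problem to a uniform first-moment bound
\[
\sup_{T>0,\,x,y}\E_B\!\left[\int_0^T R(x+B_s)\,ds\ \Big|\ B_T=y\right]<\infty,
\]
proved in two lines by time-reversal symmetry and the observation that for $s\le T/2$ the bridge has variance $s(T-s)/T\ge s/2$, hence $\Pb(|x+B_s|\le 1\mid B_T=y)\lesssim (1+s)^{-d/2}$.

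Your detour through the pointwise ``bridge Green bound''
\[
\int_0^T\frac{p_s(a,b)\,p_{T-s}(b,c)}{p_T(a,c)}\,ds\le C\bigl(G(a,b)+G(b,c)\bigr)
\]
is where the proposal breaks: this inequality is \emph{false} for $d\ge 4$. Take $a=0$, $b=e_1$, $c=Le_1$ with $L$ large and $T\ll L$. The bridge density $q_s(b)$ is a Gaussian in $b$ with mean $m_s=(sL/T)e_1$ and variance $\sigma_s^2=s(T-s)/T$; at $s^*=T/L$ one has $m_{s^*}=b$ and $\sigma_{s^*}^2\approx T/L$, giving a peak of height $\asymp (T/L)^{-d/2}$ and width in $s$ of order $(T/L)^{3/2}$. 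The peak alone contributes $\asymp (T/L)^{(3-d)/2}$ to the left side, which for $d\ge 4$ blows up as $T/L\to 0$, while $G(a,b)+G(b,c)\asymp 1$. Your sketch ``bound $p_{T-s}(b,c)/p_T(a,c)$ pointwise'' already fails, since that ratio is $\asymp e^{|a-c|^2/(2T)}$ when $b\approx c$.

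The fix is easy and brings you back to the paper's argument: you never need the pointwise bound. In your iteration, integrating out $(z_n,s_n)$ only requires
\[
\sup_{a,c,T,x}\int_{\R^d} R(x+b)\int_0^T\frac{p_s(a,b)\,p_{T-s}(b,c)}{p_T(a,c)}\,ds\,db
=\sup_{a,c,T,x}\E_B\!\left[\int_0^T R(x+B_s)\,ds\ \Big|\ B_0=a,\ B_T=c\right],
\]
which is exactly the uniform first-moment bound above. With that in hand your iteration (equivalently, Khas'minskii's lemma for the bridge) goes through verbatim.
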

\begin{proof}
  The statement in \eqref{eq:BMconcl} follows from Portenko's lemma, see 
  \cite[(3.1), (3.2)]{mukherjee2016weak}.

  We turn to proving  \eqref{eq:bridgeconcl}.
  Conditioned on $B_t=y$, the process
$\{B_s\}_{s\leq t}$ is a Brownian bridge. 
In particular, it has a Markovian representation. Thus, 
  again by Portenko's lemma, it is enough to show that 
 \begin{equation}
\label{eq:bridgeconcl1a}
\beta 
\sup_{t>0, x,y\in\R^d} \E_B\left[ \int_0^{t} R(x+B_s)ds \bigg| B_t=y\right]<
1,
\end{equation} 
for all $\beta$ small enough. By symmetry, as $X_s$ is a Brownian bridge,
it suffices to show that
 \begin{equation}
\label{eq:bridgeconcl2}
\sup_{t>0, x,y\in\R^d} \E_B\left[ \int_0^{t/2} R(x+B_s)ds \bigg| B_t=y\right]
<
\infty.
\end{equation}
Note that $X_s$ has mean $sy/t$ and variance $s(t-s)/t$, which in our
range is larger than $Cs$. In particular, for $s\leq t/2$, we have
$$\sup_{x,y\in\R^d}\Pb_B\left[|B_s-x|\leq 1\bigg| B_t=y\right]\leq C{(1+}s)^{-d/2}.$$
Since $s^{-d/2}$ is integrable as $s\to+\infty$, 
this yields \eqref{eq:bridgeconcl2}
and completes the proof of the lemma.
\end{proof}

\begin{lemma}\label{l.bridgetail}
Let $B$ be a standard Brownian motion in $d\geq3$. For any $\alpha\in(0,2)$, $t>0$ and compact set $A\subset \R^d$ with $0\nin A$, we have
\[
  \sup_{w\in A}\E_{B}\left[ \big(\int_{t/\eps^\alpha}^{t/\eps^2}R(B_s)ds\big)^2 \,\bigg|\,   B_{t/\eps^2}=\frac{w}{\eps}\right]\les \eps^{\alpha(\frac{d}{2}-1)}.
\]
\end{lemma}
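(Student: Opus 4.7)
My plan is to expand the square, apply the Markov property of the Brownian bridge, and reduce everything to a single-time small-ball probability. Write $T = t/\varepsilon^2$ and $t_1 = t/\varepsilon^\alpha$; by Fubini,
\[
\E_B\Big[\Big(\int_{t_1}^{T} R(B_s)\,ds\Big)^2 \,\Big|\, B_T = w/\varepsilon\Big] = 2\int_{t_1}^T\!\int_{t_1}^{s_2} \E_B[R(B_{s_1}) R(B_{s_2}) \mid B_T = w/\varepsilon]\,ds_1\,ds_2.
\]
Conditioning first on $B_{s_2} = y_2$, the Markov property makes $B_{s_1}\sim N(y_2 s_1/s_2,\,s_1(s_2-s_1)/s_2)$. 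Bounding this Gaussian density by its maximum and using $\mathrm{supp}\,R\subset\{|y|\leq 1\}$ yields
\[
\E_B[R(B_{s_1}) \mid B_{s_2}] \les \mathbf{1}_{|B_{s_2}|\leq 1} \cdot \min\!\Big(1,\,(s_1(s_2-s_1)/s_2)^{-d/2}\Big).
\]

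The next step is the uniform bound $\sup_{s_2\in[t_1,T]}\int_{t_1}^{s_2} \min(1,(s_1(s_2-s_1)/s_2)^{-d/2})\,ds_1 \les 1$, which follows by substituting $u = s_1/s_2$ and splitting according to whether $u(1-u)s_2$ is $\leq 1$ or $>1$; the tail integral of $(u(1-u))^{-d/2}$ near the endpoints converges after pairing with the prefactor $s_2^{1-d/2}$ thanks to $d\geq 3$. Plugging this back collapses the double integral to
\[
\text{L.H.S.} \les \int_{t_1}^{T} \Pb_B[|B_{s_2}|\leq 1 \mid B_T = w/\varepsilon]\,ds_2.
\]
Since $B_{s_2}\mid B_T = w/\varepsilon$ is $N(s_2 w/(\varepsilon T),\,s_2(T-s_2)/T)$, this probability is bounded by $C(s_2(T-s_2)/T)^{-d/2}\exp(-c s_2|w|^2/(\varepsilon^2 T(T-s_2)))$, with $c>0$ uniform over $w\in A$ since $0\notin A$. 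I would then split at $s_2 = T/2$: on $[t_1,T/2]$ the exponent is harmless and $\int s_2^{-d/2}\,ds_2 \les t_1^{1-d/2} = t^{1-d/2}\varepsilon^{\alpha(d/2-1)}$; on $[T/2,T]$, substituting $u=T-s_2$ turns the remainder into $\int_0^{T/2} u^{-d/2} e^{-c|w|^2/(\varepsilon^2 u)}\,du \les \varepsilon^{d-2}$ by a standard Laplace-type calculation. Since $\alpha<2$ forces $\alpha(d/2-1) < d-2$, the first term dominates and gives the claimed bound, uniformly over $w\in A$.

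The main obstacle is the double integral: a naive approach that bounds $G_{s_2-s_1}$ pointwise by its Gaussian prefactor or applies Cauchy--Schwarz directly over $(s_1,s_2)$ loses a factor of $\varepsilon^{-2}$ relative to the target rate. The Markov factorization is essential because it routes the singular prefactor $(s_2-s_1)^{-d/2}$ into the bounded inner $s_1$-integral, rather than letting it combine with the outer small-ball probability where it would overpower the $\varepsilon^{d-2}$ factor coming from the near-endpoint Laplace estimate.
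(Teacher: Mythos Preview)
Your argument is correct, but it takes a different route from the paper. The paper writes the double integral explicitly in terms of heat kernels,
\[
2\int\int_{t_1\leq s_1\leq s_2\leq T}\int_{\R^{2d}} R(x)R(y)\,G_{s_1}(x)G_{s_2-s_1}(y-x)\,\frac{G_{T-s_2}(w/\eps-y)}{G_T(w/\eps)}\,dxdy\,ds_1ds_2,
\]
and then observes the scaling identity
\[
\frac{G_{T-s_2}(w/\eps-y)}{G_T(w/\eps)}=\frac{G_{t-\eps^2 s_2}(w-\eps y)}{G_t(w)}\les 1,
\]
uniformly over $w\in A$, $y\in\mathrm{supp}\,R$, and $s_2\leq T$, since $0\notin A$. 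This removes the conditioning in one stroke, reducing the whole estimate to the unconditioned $\E_B[(\int_{t_1}^{T} R(B_s)\,ds)^2]$, which is immediate. By contrast, you keep the bridge structure throughout: you use the Markov property to decouple the inner $s_1$-integral (bounded uniformly via $d\geq 3$), and then estimate the remaining single-time bridge probability directly by splitting at $T/2$. The paper's approach is shorter and conceptually cleaner---one inequality replaces your splitting and Laplace estimate---while yours is more self-contained and avoids spotting the heat-kernel ratio trick. A small presentational slip: the indicator $\mathbf{1}_{|B_{s_2}|\leq 1}$ in your displayed bound for $\E_B[R(B_{s_1})\mid B_{s_2}]$ actually comes from the $R(B_{s_2})$ factor, not from the conditional expectation itself; the argument is right but the line as written conflates the two.
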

\begin{proof}
By a direct calculation, we have 
\[
\begin{aligned}
&\E_{B}\left[ \big(\int_{t/\eps^\alpha}^{t/\eps^2}R(B_s)ds\big)^2 \,\bigg|\,   B_{t/\eps^2}=\frac{w}{\eps}\right]\\
&=2\int_{[t/\eps^\alpha\leq s_1 \leq s_2 \leq t/\eps^2]}\int_{\R^{2d}}R(x)R(y)G_{s_1}(x)G_{s_2-s_1}(y-x)G_{t/\eps^2-s_2}(\frac{w}{\eps}-y)G_{t/\eps^2}(\frac{w}{\eps})^{-1}dxdyds_1ds_2.
\end{aligned}
\]
By the fact that $w\in A$ and $0\nin A$, we have 
\[
\frac{G_{t/\eps^2-s_2}(\frac{w}{\eps}-y)}{G_{t/\eps^2}(\frac{w}{\eps})}=\frac{G_{t-\eps^2s_2}(w-\eps y)}{G_t(w)} \les1,
\]
uniformly in $s_2\leq t/\eps^2, w\in A, \, y\in \mathrm{supp}(R)$ and $\eps\ll1$. This shows that we can remove the conditional expectation and derive
\[
\sup_{w\in A} \E_{B}\left[ \big(\int_{t/\eps^\alpha}^{t/\eps^2}R(B_s)ds\big)^2 \,\bigg|\,   B_{t/\eps^2}=\frac{w}{\eps}\right] \les \E_B\left[ \big(\int_{t/\eps^\alpha}^{t/\eps^2}R(B_s)ds\big)^2\right] \les \eps^{\alpha(\frac{d}{2}-1)},
\]
which completes the proof.
\end{proof}

\section{Negative moments of $Z(t,x)$}
\label{s.nemm}

We now prove Proposition~\ref{p.negative}. 
The goal is to show there exists $\beta_0>0$ such that if $\beta<\beta_0$ and $n\in\Z_+$, we have
\begin{equation}\label{e.negmm}
\sup_{t>0} \E[ Z(t,x)^{-n}] \leq C_{\beta,n},
\end{equation}
with some constant $C_{\beta,n}>0$. We adapt to our setting 
the proof of \cite[
Corollary 4.8]{hu2018asymptotics}, which deals with the case 
when the noise is also singular in space. The same proof applies to our situation, 
and we only present the details for the convenience of the readers.

Since $Z(t,x)$ has the same distribution as $u(t,x)$, it suffices to 
estimate the small ball probability~$\Pb[u(t,x)\leq r]$ for $r\ll 1$. 
We define an approximation of the spacetime white noise
\[
W_\eps(t,x)=e^{-\eps(t^2+|x|^2)}\int_{\R^{d+1}} \phi_\eps(t-s,x-y)dW(s,y),
\]
where $\phi_\eps(t,x)=\eps^{-d-2}\phi(t/\eps^2,x/\eps)$ with $\phi\in\C_c^\infty(\R^{d+1})$ such that $\phi\ge 0$ is even and $\|\phi\|_{L^1}=1$. It is clear that for fixed $\eps>0$,  
$W_\eps\in L^2(\R^{d+1})\cap \C^\infty(\R^{d+1})$ almost surely. We will use $\|\cdot\|_2$ to denote the $L^2(\R^{d+1})$ norm. Define 
\[
V_\eps(t,x)=\int_{\R^d} \varphi(x-y)W_\eps(t,y)dy, \    \   \ccR_\eps(t,s,x,y)=\E[V_\eps(t,x)V_\eps(s,y)],
\]
 and 
\[
\cU_\eps(t,x)=\E_B\left[ e^{\V_t^\eps(B) }\right],
\]
with 
\[
\V_t^\eps(B)=\beta\int_0^t V_\eps(t-s,x+B_s)ds-\frac12\beta^2\Q_\eps(t,x,x,B,B),
\]
where
\[
\Q_\eps(t,x,y,B^1,B^2)=\int_0^t\int_0^t  \ccR_\eps(t-s,t-\ell,x+B_s^1,y+B_{\ell}^2)dsd\ell.
\]
By \cite[Proposition 4.2]{hu2018asymptotics}, $\cU_\eps(t,x)\to u(t,x)$ in probability so we only need to estimate $\Pb[\cU_\eps(t,x)\leq r]$ for $r\ll1$. 

With any given $W_\eps$, define the expectation
\[
\E^{W_\eps}_B[ F(B^1,B^2)]=\frac{\E_B[ F(B^1,B^2)e^{\V_t^\eps(B^1)+\V_t^\eps(B^2)}]}{\E_B[ e^{\V_t^\eps(B^1)+\V_t^\eps(B^2)}]}.
\]
To emphasize the dependence of $\cU_\eps$ on $W_\eps$, we write $\cU_\eps(t,x)=\cU_\eps(t,x,W_\eps)$. For any $\lambda>0$, define the set 
\[
A_\lambda(t,x)=\left\{W_\eps: \cU_\eps(t,x,W_\eps)>\frac12,  \int_0^t\E_B^{W_\eps}[ R(B^1_s-B^2_s)]ds\leq \lambda\right\}.
\]

\begin{lemma}
For any $\tilde{W}_\eps\in A_\lambda(t,x)$, we have 
\[
\cU_\eps(t,x,W_\eps)\geq \frac12e^{-\sqrt{\lambda} \|W_\eps-\tilde{W}_\eps\|_2},
\]
with $\|\cdot\|_2$ denoting the $L^2(\R^{d+1})$ norm.
\end{lemma}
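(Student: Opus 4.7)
The plan exploits the affine structure of $W_\eps \mapsto \V_t^\eps(B;W_\eps)$. Since $V_\eps$ is linear in $W_\eps$ and the compensator $\Q_\eps$ depends only on the deterministic covariance $\ccR_\eps$, one has
\[
\V_t^\eps(B;W_\eps) - \V_t^\eps(B;\tilde W_\eps) = \beta\la W_\eps - \tilde W_\eps, h_B\ra_2,
\]
where, after the change of variable $r = t-s$, $h_B(r,z) := \1_{[0,t]}(r)\varphi(x+B_{t-r}-z) \in L^2(\R^{d+1})$. This factorizes the partition function as
\[
\cU_\eps(t,x,W_\eps) = \cU_\eps(t,x,\tilde W_\eps) \cdot \E_B^{\tilde W_\eps,1}\!\left[ e^{\beta\la W_\eps - \tilde W_\eps, h_B\ra_2}\right],
\]
where $\E_B^{\tilde W_\eps,1}[F(B)] := \E_B[F(B)e^{\V_t^\eps(B;\tilde W_\eps)}]/\cU_\eps(t,x,\tilde W_\eps)$ denotes the quenched single-replica polymer expectation. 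I would then apply Jensen's inequality to the convex function $z\mapsto e^z$, followed by Cauchy--Schwarz in $L^2(\R^{d+1})$, to obtain
\[
\E_B^{\tilde W_\eps,1}\!\left[ e^{\beta\la W_\eps - \tilde W_\eps, h_B\ra_2}\right] \ge \exp\bigl(-\beta\|W_\eps - \tilde W_\eps\|_2\,\|\E_B^{\tilde W_\eps,1}[h_B]\|_2\bigr).
\]

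The key step is then to identify $\|\E_B^{\tilde W_\eps,1}[h_B]\|_2^2$ with a two-replica quantity. The two-replica tilt $e^{\V_t^\eps(B^1)+\V_t^\eps(B^2)}$ defining $\E^{\tilde W_\eps}_B$ factorizes in $(B^1, B^2)$, so the two paths remain independent under the quenched two-replica measure, giving
\[
\|\E_B^{\tilde W_\eps,1}[h_B]\|_2^2 = \E_B^{\tilde W_\eps}\!\left[\la h_{B^1}, h_{B^2}\ra_2\right] = \int_0^t \E_B^{\tilde W_\eps}\!\left[ R(B^1_s - B^2_s)\right] ds \le \lambda,
\]
where the middle identity uses $\int_{\R^d}\varphi(a-z)\varphi(b-z)\,dz = R(a-b)$ together with the time change, and the final inequality is the second defining property of $A_\lambda$. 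Combining this with the first property $\cU_\eps(t,x,\tilde W_\eps) > 1/2$ yields the stated lower bound (the extra factor of $\beta$ in the exponent is harmless in the small-$\beta$ regime, where $e^{-\beta\sqrt\lambda\,\|\cdot\|_2} \ge e^{-\sqrt\lambda\,\|\cdot\|_2}$, or is absorbed into the definition of $\lambda$).

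There is essentially no technical obstacle here: the lemma is a clean deterministic stability estimate for the partition function viewed as a function of the driving noise, and the proof unfolds mechanically from the linearity in $W_\eps$, Jensen, Cauchy--Schwarz, and the two-replica interpretation of an $L^2$ norm. The only conceptual point worth emphasizing is that $B^1$ and $B^2$ are independent under the quenched two-replica polymer measure for fixed $W_\eps$; the nontrivial intersection geometry between two polymer paths arises only after averaging over the environment, and here it emerges precisely because we compare $\E_B^{\tilde W_\eps,1}[h_B]$ (single replica) against its squared $L^2$-norm (which becomes a quenched intersection functional).
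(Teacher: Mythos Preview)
Your proof is correct and follows essentially the same route as the paper's: factorize the partition function via the single-replica polymer measure at $\tilde W_\eps$, apply Jensen, then Cauchy--Schwarz in $L^2(\R^{d+1})$, and recognize the squared $L^2$-norm of the averaged kernel as the two-replica intersection functional bounded by $\lambda$. You are in fact slightly more careful than the paper in distinguishing the single-replica expectation $\E_B^{\tilde W_\eps,1}$ from the two-replica one and in flagging the leftover factor of $\beta$, which the paper silently absorbs (legitimately, since $\beta<1$).
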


\begin{proof}
We write 
\[
\begin{aligned}
\cU_\eps(t,x,W_\eps)=\E_B[ e^{\V_t^\eps(B)}]=&\E_B[e^{\tilde{\V}_t^\eps(B)}] \frac{ \E_B[e^{\V_t^\eps(B)-\tilde{\V}_t^\eps(B)} e^{\tilde{\V}_t^\eps(B)}]}{\E_B[ e^{\tilde{\V}_t^\eps(B)}]}\\
=&\cU_\eps(t,x,\tilde{W}_\eps) \E_B^{\tilde{W}_\eps}[ e^{\V_t^\eps(B)-\tilde{\V}_t^\eps(B)}],
\end{aligned}
\]
where $\tilde{\V}_t^\eps$ is obtained by replacing $W_\eps\mapsto \tilde{W}_\eps$ in the expression of $\V_t^\eps$. Since $\tilde{W}_\eps\in A_\lambda$, by Jensen's inequality we have 
\[
\cU_\eps(t,x,W_\eps)\geq\frac12  \exp\left(\E_B^{\tilde{W}_\eps}[\V_t^\eps(B)-\tilde{\V}_t^\eps(B)]\right).
\]
It remains to show  that
\begin{equation}\label{e.se251}
|\E_B^{\tilde{W}_\eps}[\V_t^\eps(B)-\tilde{\V}_t^\eps(B)]| \leq \sqrt{\lambda}\|W_\eps-\tilde{W}_\eps\|_2.
\end{equation}
We write 
\[
\begin{aligned}
\V_t^\eps(B)-\tilde{\V}_t^\eps(B)=&\beta\int_0^t [V_\eps(t-s,x+B_s)-\tilde{V}_\eps(t-s,x+B_s)]ds\\
=&\beta \int_0^t\int_{\R^d} \varphi(x+B_s-y)[W_\eps(t-s,y)-\tilde{W}_\eps(t-s,y)]dyds,
\end{aligned}
\]
and apply the Cauchy-Schwarz inequality to get
\[
\begin{aligned}
|\E_B^{\tilde{W}_\eps}[\V_t^\eps(B)-\tilde{\V}_t^\eps(B)]|
&\leq \beta\|W_\eps-\tilde{W}_\eps\|_2\sqrt{\int_0^t \int_{\R^d} |\E_B^{\tilde{W}_\eps}[\varphi(x+B_s-y)] |^2 dyds} \\
&\leq \beta\|W_\eps-\tilde{W}_\eps\|_2\sqrt{\int_0^t \E_B^{\tilde{W}_\eps}[R(B^1_s-B^2_s)]ds}\leq \sqrt{\lambda} \|W_\eps-\tilde{W}_\eps\|_2,
\end{aligned}
\]
which completes the proof.
\end{proof}

\begin{lemma}\label{l.lowerbd}
There exists universal constants $\lambda,c>0$ such that $\Pb[A_\lambda(t,x)]\geq c$.
\end{lemma}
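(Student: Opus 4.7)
The plan is to exhibit $A_\lambda(t,x)$ as the intersection of two events whose probabilities can be controlled separately: a lower bound for $\cU_\eps$, and an upper bound for the averaged intersection time under the polymer measure. The key observation is that the weighted expectation $\E_B^{W_\eps}$ carries a denominator $\cU_\eps^2$, so the relevant second-moment quantity can be computed by a Gaussian calculation on the underlying white noise.

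\textbf{Step 1: first and second moments of $\cU_\eps$.} First I would verify, using the exponential martingale correction in the definition of $\V_t^\eps(B)$, that $\E[\cU_\eps(t,x)]=1$. A direct Gaussian computation gives
\[
\E[\cU_\eps(t,x)^2]
= \E_B\left[\exp\bigl(\beta^2\Q_\eps(t,x,x,B^1,B^2)\bigr)\right],
\]
where $B^1,B^2$ are independent. Since $\Q_\eps$ is bounded pointwise by the unmollified intersection functional (or converges to it as $\eps\to0$), Lemma~\ref{l.exmm} (Portenko's lemma) shows that for $\beta$ small enough,
\[
\sup_{\eps,t,x}\E[\cU_\eps(t,x)^2]\le C_\beta<\infty.
\]
The Paley--Zygmund inequality then yields
\[
\Pb\!\left[\cU_\eps(t,x)>\tfrac12\right]\ge \frac{(1/2)^2}{\E[\cU_\eps(t,x)^2]}\ge c_0>0.
\]

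\textbf{Step 2: second moment times intersection integral.} Using the definition of $\E_B^{W_\eps}$ and the fact that $\E_B[e^{\V_t^\eps(B^1)+\V_t^\eps(B^2)}]=\cU_\eps(t,x)^2$, I would write
\[
\cU_\eps^2\int_0^t\E_B^{W_\eps}[R(B^1_s-B^2_s)]\,ds
=\int_0^t\E_B\!\left[R(B^1_s-B^2_s)\,e^{\V_t^\eps(B^1)+\V_t^\eps(B^2)}\right]ds.
\]
Taking expectation over the white noise and performing the Gaussian integration (as in Step 1) gives
\[
\E\!\left[\cU_\eps^2\int_0^t\E_B^{W_\eps}[R(B^1_s-B^2_s)]\,ds\right]
=\int_0^t\E_B\!\left[R(B^1_s-B^2_s)\,e^{\beta^2\Q_\eps(t,x,x,B^1,B^2)}\right]ds.
\]
Bounding this is the main technical point: using the inequality $ae^a\le \tfrac12(a^2+e^{2a})$ applied to $a=\beta^2\int_0^t R(B^1_s-B^2_s)\,ds$, together with the standard estimate $\int_0^\infty R(B^1_s-B^2_s)\,ds<\infty$ a.s.\ in $d\ge 3$ and its exponential moment bound from Lemma~\ref{l.exmm}, yields
\[
\E\!\left[\cU_\eps(t,x)^2\int_0^t\E_B^{W_\eps}[R(B^1_s-B^2_s)]\,ds\right]\le C_\beta',
\]
uniformly in $\eps,t,x$, provided $\beta$ is small.

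\textbf{Step 3: combining.} Let $Y:=\cU_\eps^2\int_0^t\E_B^{W_\eps}[R(B^1_s-B^2_s)]\,ds$. Markov's inequality gives $\Pb[Y>M]\le C_\beta'/M$. Choose $M=2C_\beta'/c_0$, so that $\Pb[Y>M]\le c_0/2$. On the event $\{\cU_\eps>1/2\}\cap\{Y\le M\}$ we have $\cU_\eps^2>1/4$, hence
\[
\int_0^t\E_B^{W_\eps}[R(B^1_s-B^2_s)]\,ds=\frac{Y}{\cU_\eps^2}\le 4M=:\lambda.
\]
Thus $A_\lambda(t,x)\supset\{\cU_\eps>1/2\}\cap\{Y\le M\}$, and
\[
\Pb[A_\lambda(t,x)]\ge \Pb[\cU_\eps>\tfrac12]-\Pb[Y>M]\ge c_0-c_0/2=c_0/2=:c.
\]

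The main obstacle is the uniformity in $t$ and $\eps$ of the two exponential-moment bounds in Steps~1 and~2: both rely crucially on being in $d\ge 3$ and on the weak-disorder smallness of $\beta$, exactly where Lemma~\ref{l.exmm} (Portenko's lemma) and the transience of Brownian difference apply. Once these are in hand, the Paley--Zygmund/Markov combination is routine.
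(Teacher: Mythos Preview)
Your approach is essentially the paper's: Paley--Zygmund to lower bound $\Pb[\cU_\eps>1/2]$, then Markov's inequality on the numerator $Y=\cU_\eps^2\int_0^t\E_B^{W_\eps}[R(B^1_s-B^2_s)]\,ds$ of the weighted intersection time, and subtract. The paper writes the complement event as $B_\lambda$ and notes that on $\{\cU_\eps>1/2\}$ the denominator $\cU_\eps^2$ exceeds $1/4$, which is exactly your Step~3.

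There is one small slip in Step~2. After the Gaussian computation you obtain $\E_B\big[(\int_0^t R(B^1_s-B^2_s)\,ds)\,e^{\beta^2\Q_\eps}\big]$, but the exponent is $\Q_\eps$, not $\beta^2\int_0^t R(B^1_s-B^2_s)\,ds$, so the inequality $ae^a\le\tfrac12(a^2+e^{2a})$ does not apply as stated. The claim in Step~1 that $\Q_\eps$ is pointwise bounded by the unmollified intersection functional is not obvious either: $\Q_\eps$ is a double time integral against the mollified covariance $\ccR_\eps$, which blows up like $\eps^{-2}$ on a thin diagonal strip. The paper handles both points at once by applying Cauchy--Schwarz to split $\int R\,ds$ from $e^{\beta^2\Q_\eps}$ and then invoking Lemma~\ref{l.intersection}, which is the $\Q_\eps$-specific exponential moment bound (proved via \cite[Corollary~4.4]{GRZ17}), rather than Lemma~\ref{l.exmm}. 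With that replacement your argument goes through verbatim.
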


\begin{proof}
We have 
\[
\Pb[A_\lambda(t,x)] \geq \Pb[\cU_\eps(t,x,W_\eps)>
\frac{1}{2}]-\Pb[ B_\lambda(t,x)],
\]
with 
\[
B_\lambda(t,x)=\left\{W_\eps:  \cU_\eps(t,x,W_\eps)>\frac12,  \int_0^t\E_B^{W_\eps}[ R(B^1_s-B^2_s)]ds> \lambda\right\}.
\]
Using the fact that $\E[\cU_\eps(t,x,W_\eps)]=1$ and the Paley-Zygmund's inequality, we have 
\[
\Pb\Big[\cU_\eps(t,x,W_\eps)>\frac{1}{2}\Big]
\geq \frac{1}{4\E[\cU_\eps(t,x,W_\eps)^2]}=\frac{1}{4\E_B[ e^{\beta^2 \Q_\eps(t,x,x,B^1,B^2)}]}.
\]
For $B_\lambda(t,x)$, we have, as $\cU_\eps(t,x,W_\eps)>1/2$, 
\begin{equation}\label{nov2702}
\begin{aligned}
\Pb[B_\lambda(t,x)]\leq &\Pb\left[ \int_0^t \E_B[ R(B_s^1-B_s^2) 
e^{\V_t^\eps(B^1)+\V_t^\eps(B^2)}] ds >\frac{\lambda}{4}\right] \\
\leq &\frac{4}{\lambda}\E_B\left[ e^{\beta^2 
\Q_\eps(t,x,x,B^1,B^2)}\int_0^t R(B^1_s-B^2_s)ds\right]
\leq \frac{4C}{\lambda}\E_B\left[ e^{2\beta^2 \Q_\eps(t,x,x,B^1,B^2)}\right]^{1/2},
\end{aligned}
\end{equation}
with some constant $C>0$. By Lemma~\ref{l.intersection} below and choosing $\lambda$ large, there exists some constants $c,\lambda>0$ independent of $\eps,t,x$ such that $\Pb[A_\lambda(t,x)] \geq c$.
\end{proof}

\begin{lemma}\label{l.intersection}
There exists $\beta_0>0$ such that if $\beta<\beta_0$, we have 
\[
1\leq\E_B\left[ e^{\beta \Q_\eps(t,x,x,B^1,B^2)}\right]\leq C_\beta.
\]
\end{lemma}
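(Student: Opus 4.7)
The lower bound $\E_B[e^{\beta\Q_\eps}]\geq 1$ is immediate from Jensen's inequality and the pointwise nonnegativity of $\ccR_\eps$ (a convolution of the nonnegative mollifiers $\varphi$ and $\phi$), which yields $\Q_\eps\geq 0$.

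For the upper bound, my plan is to dominate $\Q_\eps$ pathwise by an additive functional of the transient difference process $B^1-B^2$, up to a rare-event correction with good exponential moments. Absorbing the factor $e^{-\eps(t^2+s^2+|x|^2+|y|^2)}\leq 1$ and unpacking the convolutions yields
\[
\ccR_\eps(t-s,t-\ell,x+B^1_s,x+B^2_\ell)\leq \tilde R_\eps(\ell-s,\,B^1_s-B^2_\ell),
\]
where $\tilde R_\eps(\tau,z):=\int R(z-w)\Phi_\eps(\tau,w)\,dw$ and $\Phi_\eps:=\phi_\eps\ast_{\mathrm{st}}\phi_\eps$ is supported in $\{|\tau|\leq 2\eps^2,\,|w|\leq 2\eps\}$ with total mass $1$. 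Changing variables $\tau=\ell-s$, splitting $\tau\geq 0$ and $\tau<0$ (the latter handled by the $B^1\leftrightarrow B^2$ symmetry of $\tilde R_\eps$), and using the pointwise estimate
\[
\int_0^{2\eps^2}\tilde R_\eps(\tau,B^1_s-B^2_{s+\tau})\,d\tau\leq R_{2\eps+\xi_s^+}(B^1_s-B^2_s)
\]
with $R_\delta(z):=\sup_{|y|\leq\delta}R(z-y)$ and $\xi_s^+:=\sup_{\tau\in[0,2\eps^2]}|B^2_{s+\tau}-B^2_s|$ (independent of the past by the forward-increment structure), the case split $\{\xi_s^+\leq 10\eps\}$ versus $\{\xi_s^+>10\eps\}$ gives
\[
\Q_\eps\lesssim \int_0^t R_{12\eps}(B^1_s-B^2_s)\,ds + \|R\|_\infty\int_0^t \1_{E_s}\,ds =: A_1 + \|R\|_\infty A_2,
\]
where $E_s=\{\xi_s^+>10\eps,\ |B^1_s-B^2_s|\leq 1+2\eps+\xi_s^+\}$.

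For $A_1$: since $R_{12\eps}$ is bounded and compactly supported and $B^1-B^2\stackrel{\mathrm{law}}{=}\sqrt{2}\,B$ is transient in $d\geq 3$, Portenko's lemma applied exactly as in the proof of Lemma~\ref{l.exmm} gives $\sup_x\E_B[e^{\beta A_1}]<\infty$ for all sufficiently small $\beta$. For $A_2$: using the independence of $\xi_s^+$ from the past, the heat-kernel bound $\Pb[|B^1_s-B^2_s|\leq M]\lesssim 1\wedge(s^{-d/2}M^d)$, and the uniform moment estimate $\E[(1+\xi_s^+)^d\1_{\xi_s^+>10\eps}]\lesssim 1$ (from the subgaussian tail of $\xi_s^+$ at scale $\eps$), I obtain $\Pb[E_s]\lesssim 1\wedge s^{-d/2}$ uniformly in $\eps$ and in the initial condition of $(B^1,B^2)$. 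Since $\int_0^\infty(1\wedge s^{-d/2})\,ds<\infty$ in $d\geq 3$, applying Khas'minskii's lemma to the additive functional $\int_0^t\1_{E_s}\,ds$ of the $2d$-dimensional Markov process $(B^1,B^2)$---where the short-range ($O(\eps^2)$) forward dependence of $\xi_s^+$ introduces only a negligible correction in the iteration---yields $\sup_{\eps,t}\E_B[e^{\beta\|R\|_\infty A_2}]<\infty$ for $\beta$ small. Cauchy--Schwarz on $e^{\beta\Q_\eps}\leq e^{2\beta A_1}e^{2\beta\|R\|_\infty A_2}$ completes the bound.

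The main obstacle is the $t$-uniformity of the estimate: naively, the bad oscillation event $\{\xi_s^+>10\eps\}$ occurs with positive probability per unit time and hence has Lebesgue measure growing linearly in $t$. The key point rescuing the argument is that this oscillation contributes to $\Q_\eps$ only when $B^1_s$ simultaneously lies within an $O(1)$ neighborhood of $B^2_s$, and in $d\geq 3$ the resulting joint event inherits the integrable $s^{-d/2}$ decay from the transience of $B^1-B^2$, which is the same mechanism that powers Lemma~\ref{l.exmm}.
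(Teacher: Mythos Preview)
Your argument is correct and takes a genuinely different route from the paper. The paper's proof is a two-line reduction: it bounds $\ccR_\eps(t_1,t_2,x_1,x_2)\les \eps^{-2}\1_{|x_1-x_2|\leq C,\,|t_1-t_2|\leq C\eps^2}$, observes that $\Q_\eps$ is then dominated by an $\eps^{-2}$-weighted two-time ``mutual intersection time'' of $B^1,B^2$, and invokes \cite[Corollary~4.4]{GRZ17} for the exponential moment of that quantity. You instead collapse the two-time structure to a single time via the oscillation $\xi_s^+$, producing a genuine additive functional $A_1=\int_0^t R_{12\eps}(B^1_s-B^2_s)\,ds$ (to which Portenko/Khas'minskii applies verbatim) plus the rare-event correction $A_2$. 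Your approach is self-contained and avoids the external citation; the paper's is shorter if one is willing to import the result from \cite{GRZ17}.

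One point deserves to be spelled out rather than left parenthetical: $\1_{E_s}$ is \emph{not} a function of the Markov state $(B^1_s,B^2_s)$, so Khas'minskii's lemma does not apply to $A_2$ as a black box. The fix you allude to does work, but here is what it looks like concretely. In the iteration $\E[A_2^n]=n!\int_{0<s_1<\cdots<s_n<t}\Pb[\cap_i E_{s_i}]\,ds$, note that $\prod_{i<n}\1_{E_{s_i}}$ is $\F_{s_{n-1}+2\eps^2}$-measurable, so one may condition on $\F_{s_{n-1}+2\eps^2}$ and use the Markov property at that time: for $s_n>s_{n-1}+2\eps^2$ your heat-kernel and subgaussian bounds give $\Pb[E_{s_n}\mid\F_{s_{n-1}+2\eps^2}]\leq C(1\wedge (s_n-s_{n-1}-2\eps^2)^{-d/2})$ uniformly in the conditioning, while the strip $s_n\in[s_{n-1},s_{n-1}+2\eps^2]$ contributes at most $2\eps^2$. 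Integrating in $s_n$ yields a factor $\leq C'+2\eps^2$ independent of $\eps,t$, and iterating gives $\E[A_2^n]\leq n!(C'+2\eps^2)^n$, hence $\E[e^{\beta\|R\|_\infty A_2}]<\infty$ for $\beta$ small. With this made explicit your proof is complete.
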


\begin{proof}
Recall that 
\[
\Q_\eps(t,x,x,B^1,B^2)=\int_0^t\int_0^t  \ccR_\eps(t-s,t-\ell,x+B_s^1,x+B_{\ell}^2)dsd\ell.
\]
We write $\ccR_\eps$ explicitly:
\[
\begin{aligned}
\ccR_\eps(t_1,t_2,x_1,x_2)=&\int_{\R^{2d}} \varphi(x_1-y_1)\varphi(x_2-y_2)\E[W_\eps(t_1,y_1)W_\eps(t_2,y_2)]dy_1dy_2\\
=&\int_{\R^{2d}} \varphi(x_1-y_1)\varphi(x_2-y_2) e^{-\eps(t_1^2+t_2^2+|y_1|^2+|y_2|^2)} \phi_\eps\star\phi_\eps(t_1-t_2,y_1-y_2) dy_1dy_2\\
\leq &\int_{\R^{2d}} \varphi(x_1-y_1)\varphi(x_2-y_2) \phi_\eps\star\phi_\eps(t_1-t_2,y_1-y_2) dy_1dy_2,
\end{aligned}
\]
with $\star$ denoting the convolution. By the fact that $\varphi,\phi$ have compact supports, we have 
\[
\ccR_\eps(t_1,t_2,x_1,x_2) \les \eps^{-2}  \1_{|x_1-x_2|\leq C,|t_1-t_2|\leq C\eps^2}.
\]
for some $C>0$. Thus, $\Q_\eps$ is essentially measuring the mutual ``intersection'' time of $B^1,B^2$. By \cite[Corollary 4.4]{GRZ17} and the fact that $d\geq 3$, the proof is complete.
\end{proof}

Now we can write 
\begin{equation}\label{e.se261}
\begin{aligned}
\Pb[\cU_\eps(t,x,W_\eps)\leq r] \leq &
\Pb\Big[\frac{1}{2} e^{-\sqrt{\lambda} \mathrm{dist}(W_\eps,A_\lambda(t,x))}\leq r\Big]
\leq \Pb\left[\mathrm{dist}(W_\eps,A_\lambda(t,x)) \geq \frac{\log (2r)^{-1}}{\sqrt{\lambda}} \right],
\end{aligned}
\end{equation}
where $\mathrm{dist}(W_\eps,A_\lambda(t,x))=\inf\{ \|W_\eps-\tilde{W}_\eps\|_2: \tilde{W}_\eps\in A_\lambda(t,x)\}$. Now we can apply \cite[Lemma 4.5]{hu2018asymptotics} to obtain 
\begin{equation}\label{e.se262}
\Pb\left[\mathrm{dist}(W_\eps,A_\lambda(t,x)) \geq \tau+2 \sqrt{\log\frac{2}{c}} \right] \leq  2e^{-\tau^2/4}
\end{equation}
for all $\tau>0$, where $\lambda,c>0$ are chosen as in Lemma~\ref{l.lowerbd}. Combining \eqref{e.se261} and \eqref{e.se262}, we have 
\[
\Pb[\cU_\eps(t,x,W_\eps)\leq r] \leq  2\exp\left( -\frac14\left(\frac{\log (2r)}{\sqrt{\lambda}}+2\sqrt{\log \frac{2}{c}}\right)^2\right),
\]
which implies $\E[\cU_\eps(t,x,W_\eps)^{-n}]\les1$ and completes the proof of \eqref{e.negmm}.

\end{document}